\newtheorem{thm}{Theorem}[section]
\newtheorem{cor}[thm]{Corollary}
\newtheorem{prop}[thm]{Proposition}
\newtheorem{lem}[thm]{Lemma}
\theoremstyle{definition}
\newtheorem{defn}[thm]{Definition}
\newtheorem{exmp}[thm]{Example}
\newtheorem{obs}[thm]{Observation}
\newtheorem{ass}{Assumption}
\theoremstyle{remark}
\newtheorem{rem}[thm]{Remark}
\newcommand\mathcircled[1]{%
	\mathpalette\@mathcircled{#1}%
}
\newcommand\@mathcircled[2]{%
	\tikz[baseline=(math.base)] \node[draw,circle,inner sep=1pt] (math) {$\m@th#1#2$};%
}
\def\Sr{\mathcal{S}_{\bullet}}
\def\Sri{\tilde{\mathcal{S}_{\bullet}}}
\def\Z{\mathbb{Z}}
\def\E{\mathbb{E}}
\def\P{\mathbb{P}}
\def\coc{c\text{-}occ}
\def\Asi{A_{\sigma,i}}
\def\leqsi{\preccurlyeq_{\sigma,i}}
\DeclareMathOperator{\mult}{mult}
\DeclareMathOperator{\GG}{G}
\DeclareMathOperator{\CL}{CL}
\DeclareMathOperator{\pat}{pat}
\DeclareMathOperator{\Pat}{Pat}
\DeclareMathOperator{\Av}{Av}
\DeclareMathOperator{\Var}{Var}
\DeclareMathOperator{\Cov}{Cov}
\newcounter{indice}
\title{Asymptotic normality of consecutive patterns in permutations encoded by generating trees with one-dimensional labels}
\date{  }
\author{Jacopo Borga\footnote{\href{mailto:jacopo.borga@math.uzh.ch}{jacopo.borga@math.uzh.ch}}}
\affil{Institut für Mathematik, Universität Zürich}
\newcommand{\subjclass}[2][1991]{%
	\let\@oldtitle\@title%
	\gdef\@title{\@oldtitle\footnotetext{#1 \emph{Mathematics subject classification.} #2}}%
}
\newcommand{\keywords}[1]{%
	\let\@@oldtitle\@title%
	\gdef\@title{\@@oldtitle\footnotetext{\emph{Key words and phrases.} #1.}}%
}
\keywords{Central limit theorems, local weak limits, random permutations, permutation patterns, conditioned random walks}
\subjclass[2010]{60F05, 60C05, 60G50, 05A05}
\begin{document}

\maketitle

\begin{abstract}
We consider uniform random permutations drawn from a family enumerated through generating trees. We develop a new general technique to establish a central limit theorem for the number of consecutive occurrences of a fixed pattern in such permutations. 

We propose a technique to sample uniform permutations in such families as conditioned random colored walks. Building on that, we derive the behavior of the consecutive patterns in random permutations studying properties of the consecutive increments in the corresponding random walks. The method applies to families of permutations with a one-dimensional-labeled generating tree (together with some technical assumptions) and implies local convergence for random permutations in such families. We exhibit ten different families of permutations, most of them being permutation classes, that satisfy our assumptions.

To the best of our knowledge, this is the first work where generating trees -- which were introduced to enumerate combinatorial objects -- have been used to establish probabilistic results.
\end{abstract}


\section{Introduction}

\subsection{Generating trees: a new probabilistic approach}
The theory of generating trees has been widely used to enumerate families of combinatorial objects. We give here an informal definition of generating trees and we refer the reader to \cref{sect:gen_tree_perm} for a more precise introduction to them in the specific setting of permutations.

A \emph{generating tree} for a combinatorial class $\mathcal{C}$ is an infinite rooted tree whose vertices are the elements of $\mathcal{C}$ (each appearing exactly once in the tree) and such that the objects of size $n$ are at level $n$ (the level of a vertex being the distance from the root plus one).
The children of some object $\square\in\mathcal{C}$ correspond to the objects obtained by adding a new \emph{``atom"} to $\square$, i.e.\ a piece of object which make the size increase by $1$.

Although generating trees first appeared in  the literature in the context of permutations with forbidden patterns \cite{MR491652,MR1360119,MR1417303, MR1630680}, they were rigorously defined later, together with the ECO method (Enumerating Combinatorial Objects), in \cite{MR1717162}. The latter is a technique that is based on a recursive construction of the objects of a combinatorial class and that provides a useful tool to establish enumerative results. 

Generating trees have been used in the last 20 years to establish several enumerative results for various combinatorial classes of partitions, permutations, polyominoes and many other objects (see for instance \cite{bousquet2003four, marinov2003generating, MR2209164, MR2376115, MR2880652, MR3537914, MR3882946, MR3861052, duchi2019code, MR3961884,beaton2015slicings}). We refer to \cite{banderier2002generating} and to the Ph.D.\ thesis of Guerrini~\cite[Chapter 1]{guerrini2017enumeration} for two interesting presentations of generating trees and associated enumeration techniques through generating functions.

The goal of this article is to introduce a new facet of generating trees encoding families of permutations, in order to establish probabilistic results instead of enumerative ones. We specifically focus on central limit theorems for consecutive occurrences of patterns in permutations and their consequences on local limits for permutations. 

\subsection{Central limit theorems and local limits for random permutations}

Concentration and/or fluctuation results, like laws of large numbers or central limit theorems (CLTs), are some of the most classical results in probability theory. In the specific case of random permutations, the behavior of various statistics has been studied in many works. We mention a few of them.
\begin{itemize}
	\item One of the most famous and old CLTs for statistics of permutations can be found in the work of Goncharov~\cite{goncharov1944some, MR0131369}, where the author showed that the number of cycles of a uniform random permutation is asymptotically Gaussian after suitable normalization.
	
	\item CLTs for pattern occurrences in uniform
	random permutations were established by Fulman~\cite{MR2118603} (for inversions and descents), Goldstein~\cite[see in particular Example 3.2]{MR2157512}
	(for consecutive patterns), Bóna~\cite{MR2732825} (for monotone patterns, both in the consecutive and classical settings), Janson, Nakamura and Zeilberger~\cite{MR3338847} (for general classical patterns) and Hofer~\cite{MR3783365} (for vincular patterns).
	
	\item Janson~\cite{janson2017patterns, MR3983781, MR4058419} recently considered a random permutation drawn from the set of
	permutations of size $n$ that avoid some given set of patterns of size
	$3$. He showed that the number of occurrences of another pattern has
	a limit in distribution (not always Gaussian), after suitable scaling.
	
	\item F{\'e}ray~\cite{MR3091722} studied the asymptotic behavior of some statistics (including the number of occurrences of any vincular pattern) in Ewens distributed random permutations. Moreover, in \cite{feray2018central} he used the recently developed method of weighted dependency graphs \cite{MR3858921} to prove CLTs for the number of occurrences of any fixed pattern in multiset permutations and in set partitions.
	
	\item Basu and Bhatnagar \cite{MR3729641} studied the lengths of the longest monotone subsequences in permutations drawn from the Mallows measure. This is a non-uniform model where the probability of every permutation is proportional to $q^{\text{number of inversions}}$. They showed that when $0<q<1$, then the limiting distribution of the longest increasing subsequence (LIS) is Gaussian. This is in contrast with the uniform case (when $q=1$) where the limiting distribution of the LIS, when scaled appropriately, is the GUE Tracy–Widom distribution, as shown by Baik, Deift and  Johansson~\cite{MR1682248}. Furthermore, Crane, De Salvo
	and Elizalde \cite{MR3854041} obtain a CLT governing the number of consecutive occurrences of a given pattern in Mallows distributed random permutations.
\end{itemize}

Under some specific assumptions (made precise later), we obtain a CLT for the number of consecutive occurrences of patterns in permutations encoded by generating trees (see \cref{thm:main_thm_CLT} below). The choice of studying consecutive patterns is motivated by a recent work of the author~\cite{borga2018local}, where a notion of local topology for permutations was introduced. In this work, the author has shown that local convergence for a sequence of permutations is equivalent to the convergence of the proportions of consecutive patterns (we refer to \cref{sect:appe_local_topo} for an introduction to local topology for permutations). 
We mention that local convergence for random permutations has been also recently investigated in \cite{bevan2019permutations,borga2019square,borga2018localsubclose,Baxterscallim}.

In \cref{corl:main_thm} below, we use the aforementioned characterization of the local convergence to show that all the families of permutations that verify the CLT in \cref{thm:main_thm_CLT} locally converge both in the \emph{annealed} and in the \emph{quenched} sense\footnote{These two different versions of local convergence come from the fact that there are two sources of randomness, one for the choice of the random permutation, and one for the random root. Intuitively, in the annealed version, the random permutation and the random root are taken simultaneously, while in the quenched version, the random permutation should be thought as frozen when we take the random root. We refer to Definitions \ref{defn:weakweakconv} and \ref{strongconv} for further details.}. We highlight that in order to establish quenched local convergence (which implies the annealed one) for a sequence of random permutations, it is actually enough to establish a law of large numbers for the corresponding proportion of consecutive patterns (see \cref{thm:local_conv_perm_charact}). 

\medskip

As a further motivation for our work, we point out that the study of consecutive patterns, started by Elizalde and Noy~\cite{MR1979785,MR3017964}, has received significant attention in combinatorics and probability  in the last 15 years, with connections with other areas of mathematics,
such as dynamical systems, and other fields, such as computer science, biology, and
physics; see \cite{MR3526425} for a survey.

\subsection{An informal presentation of the main result}

We now briefly explain the CLT result presented in \cref{thm:main_thm_CLT} and the strategy to prove it. For a more precise formulation and explanation, we need some more preliminary results, and we refer the reader to \cref{sect:gen_tree_perm,sect:main_results}.
\begin{itemize}
	\item We first establish a size-preserving bijection between permutations encoded by a generating tree, whose set is denoted by $\mathcal C$, and some colored walks, whose set is denoted by $\mathcal{W}$ (see \cref{sect:bij_color_walk_perm}).
	\item We make the following three assumptions:
	\begin{itemize}
		\item We restrict our analysis to one-dimensional-labeled generating trees in such a way that the corresponding walks in $\mathcal W$ are one-dimensional (see \cref{ass1}).
		\item We assume that we can sample a uniform walk in $\mathcal W$ as a random walk conditioned to stay positive (see \cref{ass2}).
		\item We assume that the consecutive patterns of a permutation in $\mathcal C$ can be recovered from the consecutive increments of the corresponding walk in $\mathcal W$ (see \cref{ass3}).
	\end{itemize}
	\cref{sect:sampling_uniformly} is dedicated to developing a strategy for checking \cref{ass2}: given a family of permutations $\mathcal{C}$ encoded by a one-dimensional-labeled generating tree, we present a technique for sampling a uniform permutation in $\mathcal{C}$ as a random colored walk conditioned to stay positive.
	\item We then study the behavior of the conditioned random walk in order to establish a CLT for the number of consecutive occurrences of any fixed family of increments (see \cref{sect:CTL_increments}, specifically \cref{lem:lemma3}).
	\item Finally, using our assumptions, we transfer the above result to permutations (see \cref{sect:main_thm_proof}) proving a CLT for the number of  consecutive occurrences of any pattern in uniform permutations in $\mathcal C$. 
\end{itemize}

In \cref{sect:examples_gen_tree_Ok} we collect several families of permutations that satisfy Assumptions \ref{ass1}, \ref{ass2} and \ref{ass3}, proving \cref{thm:examples_ok} below.

\medskip

Let $\mathcal{S}$ be the set of all permutations of finite size. Given a set of (generalized) patterns $B$ we denote by $\Av(B)$ the set of $B$-avoiding permutations. We also denote by $\coc(\pi,\sigma)$ the number of consecutive occurrences of a pattern $\pi$ in a permutation $\sigma.$ We refer the reader to \cref{sect:notation on perm} for more definitions on permutation patterns.

\begin{thm}\label{thm:examples_ok}
	Let $\mathcal C$ be one of the following families of permutations\footnote{We refer the reader to \cref{sect:strange_patterns} for the definition of the last two families of permutations in the list, which avoid generalized patterns.}
	\begin{align*}
	\Av(123),\quad
	\Av(132),\quad
	\Av(1423,&4123),\quad
	\Av(1234,2134),\quad
	\Av(1324,3124),\\
	\Av(2314,3214),\quad
	\Av(2413,4213),\quad
	\Av&(3412,4312),\quad
	\Av(213,\bar{2}\underbracket[.5pt][1pt]{31}),\quad
	\Av(213,\bar{2}^{o}\underbracket[.5pt][1pt]{31}).
	\end{align*}
	For all $n\in\Z_{>0},$ let $\bm{\sigma}_n$ be a uniform random permutation in $\mathcal{C}$ of size $n$. 
	Then, for all patterns $\pi\in\mathcal S$, we have the following central limit theorem,
	\begin{equation*}
	\frac{\coc(\pi,\bm{\sigma}_n)-n\mu_{\pi}}{\sqrt n}\stackrel{d}{\longrightarrow}\bm{\mathcal N}(0,\gamma_{\pi}^2),
	\end{equation*}
	where $\mu_{\pi}$ and $\gamma_{\pi}^2$ are explicitly described in \cref{thm:main_thm_CLT}.
\end{thm}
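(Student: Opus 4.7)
The plan is to invoke the general result \cref{thm:main_thm_CLT} for each of the ten families listed. Since that theorem already packages the probabilistic heavy lifting (sampling a uniform permutation as a conditioned random walk, then deducing a CLT for consecutive increments and transferring it back to patterns), the entire content of this proposition lies in verifying the three hypotheses \cref{ass1,ass2,ass3} for each of the ten families individually. Accordingly, I would organise the proof as ten parallel case analyses, grouped when possible by families whose generating trees share the same structural features (for example the six four-letter doubletons $\Av(1423,4123)$, $\Av(1234,2134)$, $\Av(1324,3124)$, $\Av(2314,3214)$, $\Av(2413,4213)$, $\Av(3412,4312)$ can probably be treated in a uniform way, and similarly for the two families defined by barred patterns).

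For \cref{ass1}, my first step is to exhibit, for each family $\mathcal C$, a generating tree with one-dimensional labels and the associated succession rule. For $\Av(123)$ and $\Av(132)$ the classical Catalan generating trees with a single integer label (counting the number of active sites on the right) are well known from the early literature on pattern avoidance, so I would just quote them. For the six doubletons of length-four patterns, I would either quote the succession rules from the generating-tree literature (these classes have been shown in the catalogue of Vatter and others to admit one-label succession rules) or, failing a direct reference, derive them by tracking the insertion of a new maximum in the right-to-left construction; the expected format is a label $k\in\mathbb N_{>0}$ with transition rules of the shape $(k)\rightsquigarrow (1)(2)\cdots(k)(k+1)$ or a mild variation. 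For the barred-pattern families I would argue directly from the definition of the forbidden generalised patterns that the set of legal insertion positions can be encoded by a single integer parameter.

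For \cref{ass2}, I would invoke the general machinery of \cref{sect:sampling_uniformly}: given the succession rule extracted in the previous paragraph, this section provides a recipe that produces a signed increment distribution and identifies the uniform measure on $\mathcal C_n$ with the law of a random walk of length $n$ with those increments, conditioned to remain positive. So here the work is essentially mechanical: write down the multiset of possible label-differences and their multiplicities for each family, check the positivity/integrability conditions demanded by the strategy, and read off the conditioning event (stay strictly positive). I would handle all ten families in a single table.

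The main obstacle, and by far the largest chunk of the write-up, will be \cref{ass3}, which demands that every consecutive pattern in the permutation be determined by a bounded window of consecutive increments of the walk. This is not automatic: for each family one must exhibit a reconstruction procedure that, given a block of $|\pi|$ consecutive increments (plus possibly a bounded amount of extra state), decides whether a target pattern $\pi$ occurs at a given position. For the Catalan families and the length-four doubletons this can probably be done by a finite case analysis on the relative order induced by the labels of the insertion sites. For the two barred-pattern families the check is more delicate because the barred conditions impose non-local constraints; here I would argue that, thanks to the avoidance of the companion pattern $213$, the barred condition collapses to a local condition on a window of increments. Once \cref{ass3} is verified, the conclusion and the explicit form of $\mu_\pi$ and $\gamma_\pi^2$ follow directly from \cref{thm:main_thm_CLT}.
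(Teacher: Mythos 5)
Your proposal follows essentially the same route as the paper: it invokes \cref{thm:main_thm_CLT} and reduces everything to checking Assumptions \ref{ass1}, \ref{ass2} and \ref{ass3} family by family, quoting the one-label succession rules from the literature (West for $\Av(123),\Av(132)$, Kremer for the six doubletons, Elizalde for the two generalized-pattern families), getting \cref{ass2} mechanically from the recipe of \cref{sect:sampling_uniformly}, and putting the real work into the active-site reconstruction behind \cref{ass3}, exactly as the paper does (in full detail only for $\Av(1423,4123)$, via \cref{lem:ifwirwbfiurw}, with the other families sketched analogously). One small caveat: the generating trees must grow on the right, i.e.\ by appending a new final value, so succession rules stated in the literature for insertion of a new maximum must be transferred by the diagonal symmetry of the diagram; your passing suggestion to derive the rules by ``tracking the insertion of a new maximum'' would use the convention under which consecutive patterns are no longer read off from consecutive increments of the walk.
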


\begin{rem}
	We point out that a law of large numbers for the families $\Av(123)$ and $\Av(132)$ has been established by the author in \cite[Theorem 1.8,Theorem 1.9]{borga2018local}. Nevertheless the central limit theorem in \cref{thm:examples_ok} is a new result also for these two families.
\end{rem}

\subsection{Permutations and patterns}
\label{sect:notation on perm}
We introduce some notation. For any $n\in\Z_{>0},$ we denote the set of permutations of $[n]=\{1,2,\dots,n\}$ by $\mathcal{S}^n.$ We write permutations of $\mathcal{S}^n$ in one-line notation as $\sigma=\sigma(1)\sigma(2)\dots\sigma(n).$ Sometimes, we view a permutation $\sigma$ of size $n$ as a set of points in $[n]\times[n]$ where the $i$-th column has exactly one point at height $\sigma(i)$. We call this representation the \emph{diagram} of a permutation. For a permutation $\sigma\in\mathcal{S}^n$ the \emph{size} $n$ of $\sigma$ is denoted by $|\sigma|.$ Recall that $\mathcal{S}\coloneqq\bigcup_{n\in\Z_{>0}}\mathcal{S}^n$ denotes the set of permutations of finite size. We write sequences of permutations in $\mathcal{S}$ as $(\sigma_n)_{n\in\Z_{>0}}.$ 

If $x_1,\dots ,x_n$ is a sequence of distinct numbers, let $\text{std}(x_1,\dots ,x_n)$ be the unique permutation $\pi$ in $\mathcal{S}^n$ that is in the same relative order as $x_1,\dots ,x_n,$ i.e.\ $\pi(i)<\pi(j)$ if and only if $x_i<x_j.$
Given a permutation $\sigma\in\mathcal{S}^n$ and a subset of indices $I\subset[n]$, let $\pat_I(\sigma)$ be the permutation induced by $(\sigma(i))_{i\in I},$ namely, $\pat_I(\sigma)\coloneqq\text{std}\big((\sigma(i))_{i\in I}\big),$ where the $\sigma(i)$ are listed in increasing value of the index.
For example, if $\sigma=87532461$ and $I=\{2,4,7\}$ then $\pat_{\{2,4,7\}}(87532461)=\text{std}(736)=312$.

Given two permutations, $\sigma\in\mathcal{S}^n$ for some $n\in\Z_{>0}$ and $\pi\in\mathcal{S}^k$ for some $k\leq n,$ and a set of indices $I=\{i_1 < \ldots < i_k\}$, we say that $\sigma(i_1) \ldots \sigma(i_k)$ is an \emph{occurrence} of $\pi$ in $\sigma$ if $\pat_I(\sigma)=\pi$ (we will also say that $\pi$ is a \emph{pattern} of $\sigma$). If the indices $i_1, \ldots ,i_k$ form an interval, then we say that $\sigma(i_1) \ldots \sigma(i_k)$ is a \emph{consecutive occurrence} of $\pi$ in $\sigma$ (we will also say that $\pi$ is a \emph{consecutive pattern} of $\sigma$).
We denote intervals of integers as $[n,m]$ for some $n,m\in\Z_{>0}$ with $n\leq m$.

\begin{exmp} 
	The permutation $\sigma=1532467$ contains an occurrence (but no consecutive occurrence) of $1423$ and a consecutive occurrence of $321$. Indeed $\pat_{\{1,2,3,5\}}(\sigma)=1423$ but no interval of indices of $\sigma$ induces the permutation $1423.$ Moreover, $\pat_{[2,4]}(\sigma)=\pat_{\{2,3,4\}}(\sigma)=321.$
\end{exmp}
We say that $\sigma$ \emph{avoids} $\pi$ if $\sigma$ does not contain any occurrence of $\pi$. We point out that the definition of $\pi$-avoiding permutations refers to occurrences and not to consecutive occurrences. Given a set of patterns $B\subset\mathcal{S},$ we say that $\sigma$ \emph{avoids} $B$ if $\sigma$ avoids $\pi,$ for all $\pi\in B$. We denote by $\Av^n(B)$ the set of $B$-avoiding permutations of size $n$ and by $\Av(B)\coloneqq\bigcup_{n\in\Z_{>0}}\Av^n(B)$ the set of $B$-avoiding permutations of arbitrary finite size\footnote{For a set $B$ of classical patterns, the set $\Av(B)$ is called \emph{permutation class}. Since in the paper we will also consider permutations avoiding generalized patterns, we prefer not to use this terminology in order to avoid any confusion.}. 
In general, if $\mathcal{C}$ is a family of permutations, we denote by $\mathcal{C}^n$ the set of permutations of size $n$ in $\mathcal{C}$.

\medskip

We end this section with some remarks on the probabilistic notation that we adopt in the paper. We denote random quantities using \textbf{bold} characters. Given a random variable $\bm{X}$ we denote its law with $\mathcal{L}aw(\bm{X})$. We write $\xrightarrow{P}$ for convergence in probability and $\xrightarrow{d}$ for convergence in distribution. Finally $\stackrel{d}{=}$ denotes an equality in distribution.

\subsection{Generating trees for families of permutations}\label{sect:gen_tree_perm}

We introduce here the notion of generating tree and we collect some useful observations and results about these structures which are needed to precisely state our main results in \cref{sect:main_results}. 

\subsubsection{Definitions and examples}
Since we are interested in the study of permutations, we restrict the definition of generating tree to these specific objects. We need the following preliminary construction.

\begin{defn}
	Given a permutation $\sigma\in\mathcal{S}^n$ and an integer $m\in [n+1],$ we denote by $\sigma^{*m}$ the permutation obtained from $\sigma$ by \emph{appending a new final value} equal to $m$ and shifting by $+1$ all the other values larger than or equal to $m.$ Equivalently,
	$$\sigma^{*m}\coloneqq\text{std}(\sigma(1),\dots,\sigma(n),m-1/2).$$
\end{defn}

We say that a family of permutations $\mathcal{C}$ \emph{grows on the right} if for every permutation $\sigma\in\mathcal{S}$ such that $\sigma^{*m}\in\mathcal{C}$ for some $m\in [|\sigma|+1]$, we have $\sigma\in\mathcal{C}.$ From now until the end of the article we will only consider families of permutations that grow on the right, without explicitly saying it every time.

\begin{defn}
	\label{def:gen_tree}
	The \emph{generating tree} for a family of permutations $\mathcal{C}$ is the infinite rooted tree whose vertices are the permutations of $\mathcal{C}$ (each appearing exactly once in the tree) and such that permutations of size $n$ are at level $n$ (the level of a vertex being the distance from the root plus one).
	The children of some permutation $\sigma\in\mathcal{C}$ correspond to the permutations obtained by appending a new final value to $\sigma$.
\end{defn}

\begin{rem}
	Another classical way of defining generating trees for families of permutations is by appending a new maximal value on the top of the diagram of $\sigma$. We will never consider this case in our article since it is less convenient for our purposes of studying consecutive patterns.
\end{rem}

We now give an example of a generating tree for the family of permutations avoiding the patterns 1423 and 4123 inspired by a result of Kremer \cite{kremer2000permutations, kremer2003postscript}\footnote{Note that the various results in \cite{kremer2000permutations, kremer2003postscript} are stated for families of permutations that grows on the top and not on the right. However, these results  can be translated into permutations growing on the right by using the diagonal symmetry of the diagram of pattern-avoiding permutations. For instance, we can recover the result for $\{1423,4123\}$-avoiding permutations growing on the right from $\{1342,2341\}$-avoiding permutations growing on the top.}. This family will appear several times along this section (see Examples \ref{exmp:1423_4123_av_part0}, \ref{exmp:1423_4123_av} and \ref{exmp:1423_4123_av_part2}). We inform the reader that several results presented in these examples are useful also for Section \ref{sec:1423,4123}.

\begin{exmp}
	\label{exmp:1423_4123_av_part0}
	The first levels of the generating tree for $\{1423,4123\}$-avoiding permutations are drawn in \cref{gen_tree_1423_4123}. Note that every child of a permutation is obtained by appending a new final value. For instance, the permutation $321$ on the left-hand side of the third level is obtained from the permutation $21$ by appending a new final value equal to $1.$
\end{exmp}

\begin{defn}
	\label{def:gen_tree_part2}
	Let $S$ be a set and assume there exists an $S$-valued statistic on the permutations of $\mathcal{C},$ whose value determines the number of children in the generating tree and the values of the statistic for the children. Then we give labels to the objects of $\mathcal{C}$ which indicate the value of the statistic. The associated \emph{succession rule} is then given by the label $\lambda$ of the root and, for any label $k,$ the labels $e_1(k),\dots,e_{h(k)}(k)$ of the $h(k)$ children of an object labeled by $k.$ In full generality, the associated succession rule is represented by the formula
	\begin{equation}
	\begin{cases} 
	\text{Root label}: (\lambda) \\
	(k)\to (e_1{\scriptstyle(k)}),\dots,(e_{h(k)}{\scriptstyle(k)}).
	\end{cases}
	\label{eq:ass_succ_rul}
	\end{equation}
	The set of labels appearing in a generating tree is denoted by $\mathcal{L}$ and for all $k\in\mathcal{L},$ the multiset of labels of the children $\{e_1(k),\dots,e_{h(k)}(k)\}$ is denoted by $\CL(k),$ where $\CL$ stands for \emph{``children labels"}.
\end{defn}

\begin{rem} \label{rem:multi-dim}
	In our examples, we only consider generating trees with labels taking values in some subset of $\mathbb{Z}_{>0}.$ However, other types of generating trees have been considered and studied, for instance trees with $(\mathbb{Z}_{>0})^2$-valued labels (an example is the generating tree for the well-known Baxter permutations \cite{bousquet2003four}) or (integer tuples)-valued labels (see \cite{marinov2003generating}). 
\end{rem}

Before continuing Example \ref{exmp:1423_4123_av_part0}, we introduce some terminology.

\begin{defn}
	Given a family of permutations $\mathcal{C},$ a permutation $\sigma\in\mathcal{C}^n$ and an integer $m\in [n+1],$ we say that $m$ is an \emph{active site} of $\sigma$ if $\sigma^{*m}\in\mathcal{C}.$ We denote by $\text{AS}(\sigma)$ the set of active sites of $\sigma$ and by $|\text{AS}(\sigma)|$ its cardinality.
	If $\text{AS}(\sigma)=\{i_1,\dots,i_k\},$ with $i_1<\dots<i_k,$ we call $i_1$ the \emph{bottom active site} and $i_k$ the \emph{top active site}.
\end{defn}

\begin{rem}
	In our examples, we only consider generating trees whose succession rule is determined (in the sense of \cref{def:gen_tree_part2}) by the statistic $|\text{AS}(\cdot)|$. However,  in other examples, we need to consider another statistics. For instance, the succession rule for the generating tree for Baxter permutations is determined by the number of left-to-right and right-to-left maxima (see \cite{bousquet2003four}).
\end{rem}

\begin{exmp}\label{exmp:1423_4123_av}
	Given a permutation $\sigma\in\Av^n(1423,4123),$ we have the following properties for the set $\text{AS}(\sigma)$ (for a proof see \cite[Theorem 8]{kremer2000permutations}):
	\begin{itemize}
		\item 1,2 and $n+1$ are always active sites, i.e.\  $\{1,2,n+1 \}\subseteq\text{AS}(\sigma)$.
		\item If $\text{AS}(\sigma)=\{i_1=1,i_2=2,i_3,\dots,i_{k-1},i_k=n+1\}$ then
		\begin{itemize}
			\item $\text{AS}(\sigma^{*1})=\{1,2,3,i_3+1,i_4+1,\dots,i_{k-1}+1,n+2\}$;
			\item $\text{AS}(\sigma^{*(n+1)})=\{1,2,i_3,\dots,i_{k-1},n+1,n+2\}$;
			\item $\text{AS}(\sigma^{*i_j})=\{1,2,i_3,\dots,i_{j},n+2\},$ for all $2\leq j< k.$
		\end{itemize}
	\end{itemize}
	
	Note that, as a consequence, $|\text{AS}(\sigma^{*1})|=|\text{AS}(\sigma^{*(n+1)})|=|\text{AS}(\sigma)|+1,$
	while,
	$|\text{AS}(\sigma^{*i_j})|=j+1,$ for $j\in[2,k-1]$.
	
	In  particular, the statistic $|\text{AS}(\cdot)|$ fulfills the conditions of \cref{def:gen_tree_part2} for  the family of $\{1423,4123\}$-avoiding permutations, and the associated succession rule is
	\begin{equation}
	\begin{cases} 
	\text{Root label}: (2) \\
	(k)\to (k+1),(3),(4)\dots,(k),(k+1). 
	\end{cases}
	\label{eq:succ_rule_1423_4123}
	\end{equation}
	
	For convenience, we choose to write the succession rule and to draw the children of a permutation (see \cref{gen_tree_1423_4123} below) in increasing order of the position of the appended final value. More precisely, if a permutation $\sigma\in\Av^n(1423,4123)$ has active sites $\text{AS}(\sigma)=\{i_1,\dots,i_k\},$ with $i_1<\dots<i_k$, its children $\sigma^{*i_1},\dots, \sigma^{*i_k}$ (with labels $(k+1),(3),(4)\dots,(k),(k+1)$) are drawn in this order from left to right.
	
	\begin{figure}[htbp]
		\begin{center}
			\includegraphics[scale=0.6]{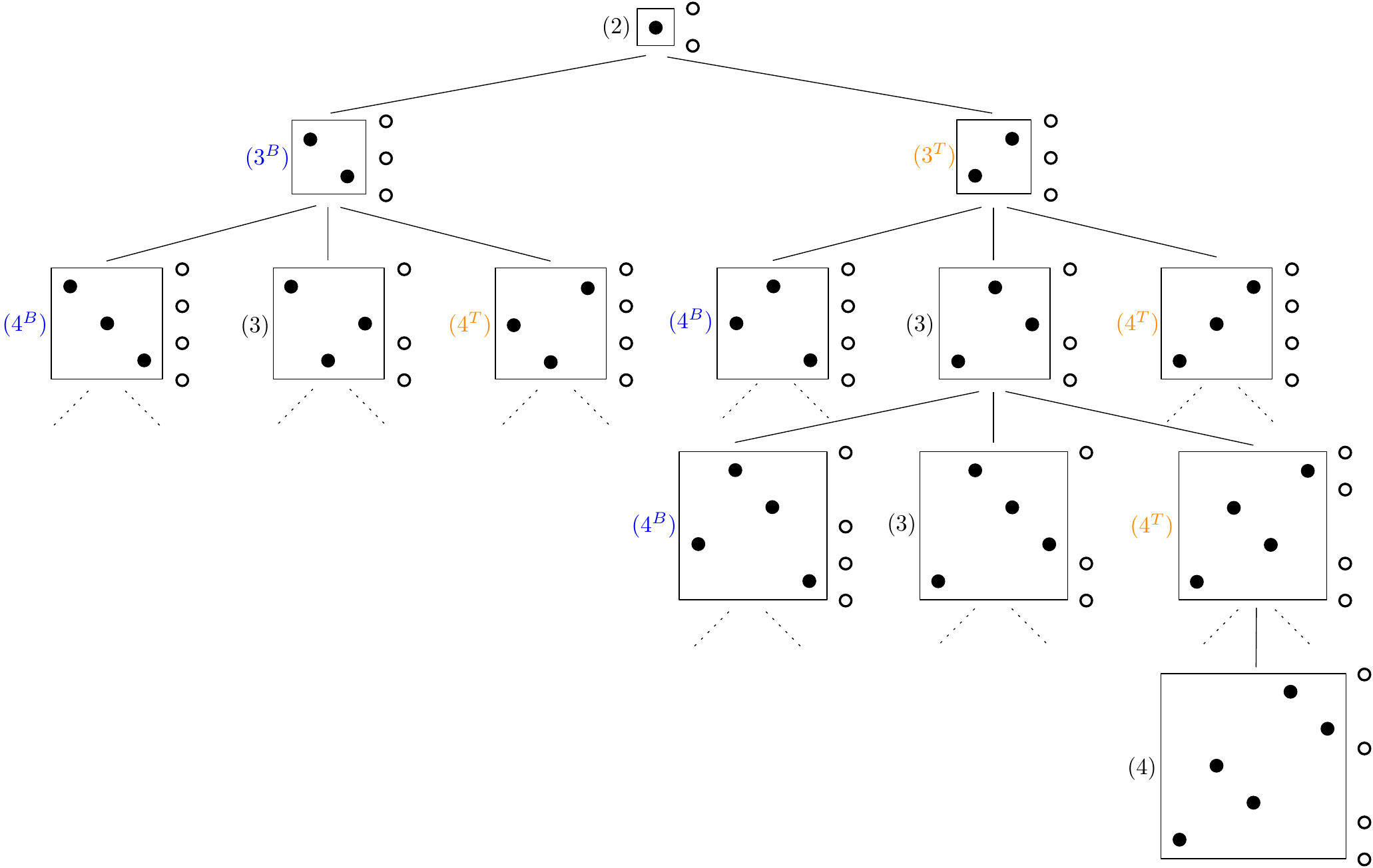}\\
			\caption{The generating tree for $\{1423,4123\}$-avoiding permutations. We completely draw only the first three levels of the tree; for the fourth level, we only draw the children of the permutation 132, and for the fifth level only one of the children of the permutation 1324. For each permutation, we highlight the position of the active sites with some circles on the right-hand side of the diagram. Moreover, on the left-hand side of each diagram we report the corresponding label given by the statistic $|\text{AS}(\cdot)|$. The superscripts $T$ and $B$ and the colors that appear in some labels will be clarified in Example \ref{exmp:1423_4123_av_part2}. \label{gen_tree_1423_4123}}
		\end{center}
	\end{figure}
\end{exmp}

\subsubsection{A bijection between permutations encoded by a generating tree and colored walks}
\label{sect:bij_color_walk_perm}

We start by pointing out that we are not assuming that the children labels $e_1(k),\dots,e_{h(k)}(k)$ appearing in the succession rule in \cref{eq:ass_succ_rul} are distinct (notice for instance that the label $(k+1)$ is produced twice from the label $(k)$ in \cref{eq:succ_rule_1423_4123}). For each pair of labels $(k,k')\in\mathcal{L}^2$, we denote by $\mult_{k}(k')$ the number of indices $i$ such that $ e_{i}(k)=k'.$

There is a natural bijection between permutations in a family encoded by a generating tree and the set of paths in the tree starting at the root. We simply associate to the endpoint of each path the permutation appearing in that vertex. We may further identify each path in the tree with the list of labels appearing on this path, but this requires to distinguish the potential repeated labels in the succession rule. More precisely, for each label $k\in\mathcal{L}$ and for each element $k'\in\CL(k)$ such that $\mult_{k}(k')>1,$  we distinguish the repeated labels by painting them  with different colors (say colors $\{1,\dots,\mult_{k}(k')\}$). Then the \emph{colored succession rule} is represented as 
\begin{equation}
	\begin{cases} 
	\label{eq:colored_succ_rule}
	\text{Root label}: (\lambda) \\
	(k)\to (e^{c}_1{\scriptstyle(k)}),\dots,(e^{c}_{h(k)}{\scriptstyle(k)}), 
	\end{cases}
\end{equation}	
where the exponents $c$ recall that the labels might be colored. We highlight that the colors and the values of the children labels depend only on the value of the parent label (and not on the color).
	
In this way, every permutation of size $n$ is bijectively encoded by a sequence of $n$ (colored) labels $(k_1=\lambda,k_2^{c_2},\dots,k_n^{c_n})$, where $k_i$ records the value of the label and $c_i$ the color,  such that every pair of two consecutive labels, $(k_i^{c_i},k_{i+1}^{c_{i+1}}),$ $1\leq i\leq n-1,$ is \emph{consistent with the colored succession rule}, i.e.\ for all $1\leq i\leq n-1,$ there exist $j=j(i)\in [1,h(k_i)]$ such that $k_{i+1}^{c_{i+1}}=e_{j}^{c}(k_i)$. We denote by $\GG$ this bijection between sequences of colored labels and permutations in the generating tree.

In order to simplify the notation, we will often write a sequence of colored labels as $(k_i^{c})_i$ instead of $(k_i^{c_i})_i$.

\begin{exmp}
	\label{exmp:1423_4123_av_part2}
	We continue Examples \ref{exmp:1423_4123_av_part0} and \ref{exmp:1423_4123_av}. In the succession rule given in \cref{eq:succ_rule_1423_4123} the label $(k+1)$ is produced twice. We distinguish the two occurrences of $(k+1)$ by painting the first one in blue (and we will write $\textcolor{blue}{{(k+1)}^{B}}$) and the second one in tangerine\footnote{The choice for the the colors is made with the aim of remembering that a Blue label activates the Bottom active site and a Tangerine label activates the Top active site.} (and we will write $\textcolor{orange}{{(k+1)}^{T}}$). The colored succession rule is 
	\begin{equation}\label{eq:biwfbwrubfwifnwepi}
	\begin{cases} 
	\text{Root label}: (2) \\
	(k)\to \textcolor{blue}{(k+1)^{B}},(3),(4)\dots,(k),\textcolor{orange}{(k+1)^{T}}. 
	\end{cases}
	\end{equation}
	
	With this coloring of the labels, the permutation 13254 (the only one displayed in the fifth level of the generating tree in \cref{gen_tree_1423_4123}) is encoded by the sequence of (colored) labels $(2,\textcolor{orange}{3^{T}},3,\textcolor{orange}{4^{T}},4),$ i.e.\ $\GG(2,\textcolor{orange}{3^{T}},3,\textcolor{orange}{4^{T}},4)=13254.$
\end{exmp}

\subsection{Main result}\label{sect:main_results}

In this section we formally state our main result. We first state three assumptions for a family of permutations $\mathcal{C}$ (growing on the right) encoded by a generating tree as above.

\subsubsection{The three assumptions}

\begin{ass}
	\label{ass1}
	There exists a $\Z$-valued statistic that determines (in the sense of \cref{def:gen_tree_part2}) the succession rule of the generating tree\footnote{These generating trees are usually called \emph{one-label generating trees}, and we used \emph{one-dimensional-labeled generating trees} earlier in the paper.}. We further require that the labels appearing in the generating tree are elements of $\mathbb{Z}_{\geq\beta}$ for some $\beta\in\mathbb{Z}$, except for the root, where the label is equal to $\lambda=\beta-1$. 
\end{ass}

\begin{rem}
	The first part of the assumption above might seem not completely mathematically 
	meaningful\footnote{For instance, taking any natural example of two-labels generating tree, it is possible to reinterpret it as a one-label generating tree, simply conjugating the $\Z^2$-valued statistic with a bijection 
	between $\Z^2$ and $\Z$. Note that the same small issue appears for instance counting the numbers of parameters in recurrence relations.}. Despite this, it is well established in the literature (see for instance \cite{MR2376115,bousquet2003four}) to refer to one-label generating trees, two-labels generating trees, etc.
	In addition, \cref{ass1} is completely mathematically meaningful when considered together with \cref{ass2} below.
\end{rem}

Let now $(\alpha_y)_{y\in\Z_{\leq 1}}$ be a probability distribution on $\Z_{\leq 1}$ and $(c_y)_{y\in\Z_{\leq 1}}$ be a sequence of non-negative integers such that $c_y\geq 1$ if and only if $\alpha_y>0$. Denote by ${\bm{Y}}^{\bm c}$ the corresponding colored $\mathbb{Z}_{\leq 1}$-valued random variable having value-distribution equal to $(\alpha_y)_{y\in\Z_{\leq 1}}$ (i.e.\ $\P({\bm{Y}}=y)=\alpha_y$ for all $y\in\Z_{\leq 1}$) and color-distribution defined as follows: conditioning on $\{{\bm{Y}}=y\}$ then $\bm{c}$ is a uniform element of $[c_y]$.

Consider now a sequence $({\bm{Y}}^{\bm c_j}_j)_{j\geq 1}$ of i.i.d.\ copies of ${\bm{Y}}^{\bm c}$.
We define the corresponding random colored walk as $({\bm{X}}_i^{\bm c_{i-1}})_{i\geq 1}$, where
\begin{equation}\label{eq:definition_walk}
{\bm{X}}_i\coloneqq (\beta-1)+\sum_{j=1}^{i-1}{\bm{Y}}_j,\quad\text{for all}\quad i\in\Z_{\geq 1}.
\end{equation}
Note that for all $i\in \Z_{\geq 2}$, ${\bm{X}}_i$ has color $\bm c_{i-1}$. In order to simplify the notation we will often simply write $({\bm{X}}_i^{\overleftarrow{\bm c}})_{i\geq 1}$ for the walk $({\bm{X}}_i^{\bm c_{i-1}})_{i\geq 1}$.

\begin{ass}
	\label{ass2}
	There exists a centered probability distribution $(\alpha_y)_{y\in\Z_{\leq 1}}$ on $\Z_{\leq 1}$ with finite variance and a sequence of non-negative integers $(c_y)_{y\in\Z_{\leq 1}}$ such that the corresponding one-dimensional random colored walk $({\bm{X}}^{\overleftarrow{\bm c}}_i)_{i\geq 1}$ (defined above) satisfies
	\begin{equation}\label{eq:rewriting_chain}
	\GG^{-1}(\bm\sigma_n)\stackrel{d}{=}\left({\bm{X}}_{i}^{\overleftarrow{\bm c}}\middle|({\bm{X}}_{j})_{j\in[2,n]}\geq \beta,{\bm{X}}_{n+1}=\beta\right)_{i\in[n]},
	\end{equation}
	where $\bm\sigma_n$ is a uniform permutation in $\mathcal C$ of size $n$. 
\end{ass}

\begin{rem}
	Note that, as a consequence of \cref{ass2}, we have that $\CL(k)\subseteq(-\infty,k+1]$, for all $k\in\mathcal{L}$. This is not as restrictive as it might seem. Indeed, many $\mathbb{Z}$-valued generating trees (see for instance Section \ref{sect:examples_gen_tree_Ok}) are constructed using the statistic that tracks the number of active sites and so, in many cases, adding a new element of a permutation increases the value of this statistic at most by one.
\end{rem}	

\begin{rem}
	The assumption that the distribution $(\alpha_y)_{y\in\Z_{\leq 1}}$ on $\Z_{\leq 1}$ has finite variance is also not too restrictive. Indeed, to the best of our knowledge, we are not aware of a class of permutations coded by a one-dimensional walk with finite expectation but
	infinite variance.
\end{rem}

\begin{rem}
	In \cref{sect:sampling_uniformly} we will explain how to determine a possible distribution $(\alpha_y)_{y\in\Z_{\leq 1}}$ and a sequence of non-negative integers $(c_y)_{y\in\Z_{\leq 1}}$ such that \cref{ass2} holds.
\end{rem}

For the third assumption we need to introduce the following definitions. 
We denote by $K^c$ the set of all finite sequences of \emph{colored} labels starting at $\lambda$ and consistent with the colored succession rule in \cref{eq:colored_succ_rule}. Moreover, $K^c_n$ is the set of sequences in $K^c$ of length $n.$  

\begin{defn}
	\label{defn:jumpslabel}
	Given a sequence $(k_i^{c_i})_{i\in[n]}\in K^c_n$ of (colored) labels in a generating tree, the corresponding sequence of \emph{colored jumps} is the sequence $(y_i^{c_{i+1}})_{i\in[n-1]}$ where $y_i=k_{i+1}-k_i$. Note that the label $y_i$ inherits the same color as the label $k_{i+1}$ (if it is colored).
\end{defn}

\begin{defn}\label{defn:rebfvriwbfowenf}
	A sequence of $h$ colored jumps $(y^{c_1}_1,\dots,y^{c_h}_h)$ is \emph{consistent} if it is equal to a factor of a sequence of jumps corresponding to an element of $K^c$.
\end{defn}

We denote by $\mathcal{Y}^c_h$ the set of consistent sequences of $h$ colored jumps. 

\begin{ass} \label{ass3} 
	For all $h \geq 1,$ there exists a function $\Pat:\mathcal{Y}^c_h \to\mathcal{S}^h $ and a constant $c(h )\geq 0$ such that if a sequence of labels $(k_i^{c_i})_{i\in[n]}\in K^c_n$ satisfies for some $m\in[n+1-h ]$ the condition 
	$$k_i>c(h ),\quad\text{for all}\quad i\in[m,m+h -1],$$ 
	then  $$\pat_{[m,m+h -1]}\big(\GG((k_i^{c_i})_{i\in[n]})\big)=\Pat\big((y_i^{c_{i+1}})_{i\in[m-1,m+h -2]}\big),$$
	where $(y_i^{c_{i+1}})_{i\in[n-1]}$ is the sequence of (colored) \emph{jumps} associated to $(k_i^{c_{i}})_{i\in[n]}\in K^c_n$.
\end{ass}
In words, the consecutive pattern induced on the interval $[m,m+h -1]$ in the permutation $\GG((k_i^{c_i})_{i\in[n]})$ is uniquely determined through the function $\Pat$ by the factor of colored jumps $(y_i^{c_{i+1}})_{i\in[m-1,m+h -2]}$ as long as the labels are large enough (see for instance \cref{exemp:messup} at page~\pageref{exemp:messup} to understand why this condition is necessary).

\medskip

Before stating our main result, we first furnish in the next section a strategy to check \cref{ass2} in many cases.

\subsubsection{Sampling uniform permutations encoded by generating trees as conditioned random walks}\label{sect:sampling_uniformly}


We fix a family of permutations $\mathcal{C}$ encoded by a generating tree that has $\mathbb{Z}$-valued labels. We further assume that the labels are elements of $\mathbb{Z}_{\geq\beta}$ for some $\beta\in\mathbb{Z}$, except for the root, where the label is equal to $\lambda=\beta-1$. 




We describe here a strategy to determine a distribution $(\alpha_y)_{y\in\Z_{\leq 1}}$ and a sequence of non-negative integers $(c_y)_{y\in\Z_{\leq 1}}$ such that the equality in \cref{eq:rewriting_chain} holds. 

\medskip

Recall that $\mathcal{Y}^c_1$ denotes the set of consistent colored jumps (see \cref{defn:rebfvriwbfowenf}). Denote by $\mathcal{Y}_1$ its uncolored version. Assume that the colored succession rule of the generating tree encoding the family $\mathcal{C}$ (see \cref{eq:colored_succ_rule}) is such that for all $k\in\Z_{\geq \beta-1},$

\begin{equation}\label{eq:ryuhfweubfweodq}
\Big\{(e^{c}_1{\scriptstyle(k)}),\dots,(e^{c}_{h(k)}{\scriptstyle(k)})\Big\}=\Big\{(k+y)^c:y^c\in\mathcal{Y}^c_1, k+y\geq \beta\Big\}.
\end{equation}
In particular, this implies that for all $k,k+y\in\Z_{\geq \beta-1}^2$, provided that $k\geq \beta-y$, the multiplicity $\mult_{k}(k+y)$ is independent of $k$ (note that if $k< \beta-y$ then $\mult_{k}(k+y)=0$). We set $m_y\coloneqq\mult_{k}(k+y)$ for some $k\geq \beta-y$. Note also that $m_y\neq 0$ if and only if $y\in\mathcal{Y}_1$.

Consider now (if they exist) $p,q>0$ such that
\begin{equation}\label{eq:frhweibdowenpen}
p\cdot\sum_{y\in\mathcal{Y}_1}q^y\cdot m_y=1,\qquad \sum_{y\in\mathcal{Y}_1}y\cdot q^y\cdot m_y=0\quad\text{and}\quad \sum_{y\in\mathcal{Y}_1}y^2\cdot q^y\cdot m_y<\infty.
\end{equation}

The existence of these parameters $p,q>0$ was already investigated by Janson~\cite{janson2012simply} in the context of simply generated trees with the following result.

\begin{lem}[Lemmas 4.1 and 4.2 in \cite{janson2012simply}]
	Set $\varphi(t)\coloneqq\sum_{y\in\mathcal{Y}_1}m_y\cdot t^{-y}=\sum_{y=-1}^\infty m_{-y}\cdot t^{y}.$ If there exists $t>0$ such that $0<\varphi(t)<\infty$, then there exists two parameters $p,q>0$ such that $p\cdot\sum_{y\in\mathcal{Y}_1}q^y\cdot m_y=1$. In particular, $p$ and $q$ are given by
	$$p\coloneqq\frac{1}{\varphi(t)}\quad \text{and} \quad q\coloneqq \frac 1 t.$$ Moreover, if $\bm \xi$ is the random variable with distribution $(p\cdot q^y\cdot m_y)_{y\in\Z_{\leq 1}}$ then
	$$\E[\bm \xi]=-\frac{t\varphi'(t)}{\varphi(t)}\eqqcolon -\psi(t)\quad\text{and}\quad \Var(\bm \xi)=t\psi'(t),$$
	and both $\E[\bm \xi]$ and $\Var(\bm \xi)$ are finite.
\end{lem}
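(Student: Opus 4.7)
The plan is a direct verification of the explicit formulas, exploiting the fact that $\varphi$ is a power series with nonnegative coefficients, hence real-analytic in the interior of its interval of convergence.

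First I would verify the normalization by substituting $p=1/\varphi(t)$ and $q=1/t$ into the left-hand side of the identity:
\[p\sum_{y\in\mathcal{Y}_1}q^y\, m_y=\frac{1}{\varphi(t)}\sum_{y\in\mathcal{Y}_1}t^{-y}m_y=\frac{\varphi(t)}{\varphi(t)}=1,\]
using only the definition of $\varphi$. Since every summand $p\cdot q^y\cdot m_y$ is nonnegative and the total mass equals $1$, the family $(p\cdot q^y\cdot m_y)_{y\in\Z_{\leq 1}}$ is a genuine probability distribution and $\bm{\xi}$ is well defined.

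Next I would pass to the moments. The hypothesis $0<\varphi(t)<\infty$ puts $t$ in the (possibly half-open) interval of convergence of the power series $\varphi(s)=\sum_{y=-1}^{\infty}m_{-y}s^y$; either by shrinking $t$ slightly, or by observing that in applications one chooses $t$ to be a critical point of $\varphi$ (which lies strictly inside the interval of convergence), one may assume $\varphi$ is infinitely differentiable at $t$ and that its derivatives may be evaluated by term-by-term differentiation. This gives
\[t\varphi'(t)=-\sum_{y\in\mathcal{Y}_1}y\, m_y\, t^{-y},\]
and hence
\[\E[\bm{\xi}]=\sum_{y\in\mathcal{Y}_1}y\cdot p\cdot q^y\cdot m_y=\frac{1}{\varphi(t)}\sum_{y\in\mathcal{Y}_1}y\,m_y\,t^{-y}=-\frac{t\varphi'(t)}{\varphi(t)}=-\psi(t).\]
For the variance I would compute $\E[\bm{\xi}^2]$ by the same procedure using $\varphi''$, then subtract $\E[\bm{\xi}]^2$. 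A short manipulation — essentially the quotient rule applied to $\psi(t)=t\varphi'(t)/\varphi(t)$ — simplifies the difference to $t\psi'(t)$; this is the most tedious step, but it is routine bookkeeping. Finiteness of both $\E[\bm{\xi}]$ and $\Var(\bm{\xi})$ then follows from the term-by-term differentiability established above.

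The only real subtlety — and the point I would take care with — is ensuring that $\varphi$ is actually smooth at the chosen $t$. For a power series with nonnegative coefficients this is automatic strictly inside the radius of convergence, but the boundary case is more delicate and would in principle require a supplementary argument (monotone convergence, or Abel-type considerations). In the setting of this paper, however, one ultimately wants $\varphi'(t)=0$ so that $\bm{\xi}$ is centered, forcing $t$ to be a critical point and hence interior, so this subtlety never materializes.
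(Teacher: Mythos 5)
The paper does not prove this lemma at all: it is stated as a citation to Janson's Lemmas 4.1 and 4.2 in \cite{janson2012simply}, which is where the proof lives. So there is no in-paper argument to compare against; the only meaningful question is whether your proposed proof is correct and whether it matches Janson's.

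Your verification is essentially the same exponential-tilting computation Janson performs. The normalization $p\sum_y q^y m_y=1$ is immediate, the mean is $\E[\bm\xi]=\varphi(t)^{-1}\sum_y y\,m_y t^{-y}=-t\varphi'(t)/\varphi(t)$, and for the variance the identity $\sum_y y^2 m_y t^{-y}=t^2\varphi''(t)+t\varphi'(t)$ gives $\E[\bm\xi^2]=\big(t^2\varphi''(t)+t\varphi'(t)\big)/\varphi(t)$, and subtracting $\big(t\varphi'(t)/\varphi(t)\big)^2$ is exactly the quotient-rule expansion of $t\psi'(t)$ — your ``routine bookkeeping'' does close. You are also right that the interesting analytic content is the legitimacy of term-by-term differentiation, which is automatic strictly inside the disc of convergence and automatic in both the paper's and Janson's intended use (where one takes $t$ to be a critical point so $\varphi'(t)=0$).

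One small overstatement to flag: ``forcing $t$ to be a critical point and hence interior'' is not a theorem. Since $\varphi$ is a Laurent series with nonnegative coefficients blowing up at $0^+$ and convex on its domain, a critical point, if one exists, is unique, but it can sit at the right endpoint $R$ of the interval of convergence when $\varphi(R)<\infty$ and $\varphi'(R^-)=0$; in that boundary case $\varphi''(R^-)$ may be infinite and the ``$\Var(\bm\xi)$ is finite'' conclusion can genuinely fail (take, e.g., $m_{-y}\sim y^{-5/2}$). This is precisely the delicacy Janson handles separately, and the paper's own lemma statement is slightly loose on this point. Your hedge (``the boundary case would require a supplementary argument'') is the honest resolution; only the word ``hence'' in ``hence interior'' is wrong.
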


We have the following consequence.

\begin{cor}
	Set $\varphi(t)\coloneqq\sum_{y\in\mathcal{Y}_1}m_y\cdot t^{-y}=\sum_{y=-1}^\infty m_{-y}\cdot t^{y}.$ Assume that there exists $t>0$ such that  $0<\varphi(t)<\infty$ and $\varphi'(t)=0$. Then there exist $p,q>0$ that satisfy \cref{eq:frhweibdowenpen}.
\end{cor}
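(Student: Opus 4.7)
The plan is essentially to recycle the existence parameters produced by Janson's lemma and then use the extra hypothesis $\varphi'(t)=0$ to upgrade those parameters so that they also satisfy the centering condition (the middle equation of \cref{eq:frhweibdowenpen}).

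First, I would set
\[
q \coloneqq \frac{1}{t}\quad\text{and}\quad p \coloneqq \frac{1}{\varphi(t)},
\]
both of which are strictly positive by the assumption $0<\varphi(t)<\infty$. With this choice, $q^y=t^{-y}$ for every $y\in\mathcal{Y}_1$, so
\[
p\cdot\sum_{y\in\mathcal{Y}_1}q^y\cdot m_y \;=\; \frac{1}{\varphi(t)}\sum_{y\in\mathcal{Y}_1}m_y\,t^{-y}\;=\;1,
\]
which is the first condition in \cref{eq:frhweibdowenpen}.

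Next I would address the centering condition. Differentiating $\varphi(t)=\sum_{y\in\mathcal{Y}_1}m_y\cdot t^{-y}$ term by term (legitimate inside the region of convergence, as the lemma guarantees finiteness of $\varphi$, $\psi$, and $\psi'$ at $t$) gives
\[
\varphi'(t) \;=\; -\sum_{y\in\mathcal{Y}_1}y\cdot m_y\cdot t^{-y-1} \;=\; -\,t^{-1}\sum_{y\in\mathcal{Y}_1}y\cdot q^y\cdot m_y.
\]
Since $t>0$, the assumption $\varphi'(t)=0$ is therefore equivalent to $\sum_{y\in\mathcal{Y}_1}y\cdot q^y\cdot m_y=0$, which is exactly the centering condition.

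For the finite variance condition, I would invoke the second part of the preceding lemma: with $p$ and $q$ defined as above, the random variable $\bm\xi$ with law $(p\cdot q^y\cdot m_y)_{y\in\Z_{\leq 1}}$ has $\Var(\bm\xi)=t\psi'(t)<\infty$. Writing this out,
\[
\sum_{y\in\mathcal{Y}_1}y^2\cdot p\cdot q^y\cdot m_y \;<\;\infty,
\]
and dividing by $p>0$ gives $\sum_{y\in\mathcal{Y}_1}y^2\cdot q^y\cdot m_y<\infty$, which is the third and last condition. There is no real obstacle: the corollary is essentially a bookkeeping consequence of Janson's lemma once one observes that $\varphi'(t)=0$ is precisely the centering identity rewritten in the variable $q=1/t$.
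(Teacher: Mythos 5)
Your proof is correct and is essentially the argument the paper intends, since the corollary is stated there as an immediate consequence of Janson's lemma: take $p=1/\varphi(t)$, $q=1/t$, and observe that $\varphi'(t)=0$ is exactly the centering condition, with finiteness of the variance supplied by the lemma. If you want to avoid justifying term-by-term differentiation (delicate only if $t$ sits at the boundary of convergence), you can instead quote the lemma's identity $\E[\bm\xi]=-t\varphi'(t)/\varphi(t)$ and divide by $p>0$, which yields the centering condition directly.
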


Set, for all $y\in\Z_{\leq 1}$, $\xi_y\coloneqq p\cdot q^y\cdot m_y$. Then, from \cref{eq:frhweibdowenpen}, $(\xi_y)_{y\in\Z_{\leq 1}}$ is a centered probability distribution on $\Z_{\leq 1}$ with finite variance.

\begin{prop}\label{prop:ehibvreibcpiwrnfc}
	Under the conditions given in \cref{eq:ryuhfweubfweodq,eq:frhweibdowenpen}, \cref{ass2} is satisfied with  $\alpha_y=\xi_y$ and $c_y=m_y$, for all $y\in\Z_{\leq 1}$.
\end{prop}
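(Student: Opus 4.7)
The plan is to verify the distributional identity \eqref{eq:rewriting_chain} by pulling back the uniform distribution of $\bm\sigma_n$ on $\mathcal{C}^n$ along the bijection $\GG$ and comparing it with the law of the conditioned random colored walk on $K^c_n$. Since $\GG$ is a bijection between $K^c_n$ and $\mathcal C^n$, the left-hand side of \eqref{eq:rewriting_chain} is simply the uniform distribution on $K^c_n$, so the task reduces to showing that the conditioned walk on the right-hand side also induces the uniform distribution on $K^c_n$.

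To this end I would fix a colored trajectory $(k_1,k_2^{c_2},\dots,k_n^{c_n})\in K^c_n$ with jumps $y_i\coloneqq k_{i+1}-k_i$ and directly compute the joint probability
$$\P\Bigl(\bm X_i^{\bm c_{i-1}}=k_i^{c_i}\text{ for all } i\in[n],\; \bm X_{n+1}=\beta\Bigr)$$
from the i.i.d.\ structure of the walk increments and the uniform-on-colors definition. Since the color $\bm c_n$ of $\bm X_{n+1}$ is not observed, this probability factorises as
$$\Bigl(\prod_{i=1}^{n-1}\frac{\alpha_{y_i}}{c_{y_i}}\Bigr)\cdot\alpha_{\beta-k_n}.$$
Plugging in $\alpha_y=p\,q^y\,m_y$ and $c_y=m_y$, and using the telescoping identity $\sum_{i=1}^{n-1}y_i+(\beta-k_n)=\beta-k_1=1$, the product collapses to $p^n\, q\cdot m_{\beta-k_n}$. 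Moreover, by \eqref{eq:ryuhfweubfweodq} the event $\{(\bm X_j)_{j\in[2,n]}\geq\beta\}$ is automatically satisfied by every trajectory in $K^c_n$, so it contributes no additional factor.

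The main obstacle I expect is to argue that this joint probability is constant over all admissible trajectories, i.e.\ that $m_{\beta-k_n}$ does not depend on the trajectory. For $n\geq 2$ every trajectory in $K^c_n$ satisfies $k_n\geq\beta$, so $\beta-k_n\leq 0$; in the generating trees to which \cref{prop:ehibvreibcpiwrnfc} will be applied (see \cref{sect:examples_gen_tree_Ok}) the multiplicities $m_y$ for $y\leq 0$ come from the "downward" children of a node and are in fact constant (equal to $1$), so $m_{\beta-k_n}$ is the same for every trajectory. Hence the conditional distribution is uniform on $K^c_n$, matching the distribution of $\GG^{-1}(\bm\sigma_n)$. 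The remaining conditions of \cref{ass2} on $(\alpha_y)_{y\in\Z_{\leq 1}}$—being a centered probability distribution on $\Z_{\leq 1}$ with finite variance—follow directly from the three relations in \eqref{eq:frhweibdowenpen} together with the definition $\xi_y=p\,q^y\,m_y$.
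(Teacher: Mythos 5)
Your overall route is the same as the paper's: identify the law of $\GG^{-1}(\bm\sigma_n)$ with the uniform law on $K^c_n$, fix a colored trajectory $(k_i^{c_i})_{i\in[n]}$, compute its probability jointly with the terminal event $\{\bm X_{n+1}=\beta\}$, and conclude uniformity of the conditioned walk from the fact that this quantity does not depend on the trajectory, using \eqref{eq:ryuhfweubfweodq} to match the supports. The one place where you genuinely diverge is the last factor: since the color of $\bm X_{n+1}$ is not observed, you correctly keep $\alpha_{\beta-k_n}=p\,q^{\beta-k_n}m_{\beta-k_n}$ and arrive at $p^n q\, m_{\beta-k_n}$, whereas the paper's computation writes $\P(\bm X_{n+1}=\beta\mid\bm X_n=k_n)=p\,q^{\beta-k_n}$ and obtains the constant $p^n q$, i.e.\ it tacitly treats $m_{\beta-k_n}$ as equal to $1$. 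So you have put your finger on a real subtlety rather than introduced an error.

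The weak point is how you discharge that subtlety. The proposition is stated under the hypotheses \eqref{eq:ryuhfweubfweodq} and \eqref{eq:frhweibdowenpen} only, and the constancy of $m_{\beta-k_n}$ over all labels $k_n$ that can terminate a sequence in $K^c_n$ is not a consequence of these; your argument imports it from the examples of \cref{sect:examples_gen_tree_Ok}, so as written you prove the statement for those families rather than in the stated generality. Moreover the imported fact is not literally true there: for the families $\mathcal A$ and $\mathcal B$ of \cref{sect:strange_patterns} some multiplicities $m_y$ with $y\le 0$ vanish ($m_0=0$ for $\mathcal A$, $m_y=0$ for even $y$ for $\mathcal B$), so what one actually needs is that $m_{\beta-k}$ takes one and the same nonzero value for every reachable terminal label $k$, and this has to be checked (or assumed) separately. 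To turn your argument into a proof of the proposition as stated you must either derive this constancy from \eqref{eq:ryuhfweubfweodq}--\eqref{eq:frhweibdowenpen} (which is not possible in general) or add it as an explicit hypothesis; note that the paper's own proof passes over exactly this point by asserting $\P(\bm X_{n+1}=\beta\mid\bm X_n=k_n)=p\,q^{\beta-k_n}$, so your more careful bookkeeping exposes a condition that is silently assumed there as well.
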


\begin{proof}
	Let ${\bm{X}^{\overleftarrow{\bm c}}}$ be the random colored walk with jump distribution corresponding to $(\alpha_y)_{y\in\Z_{\leq 1}}$ and color distribution corresponding to  $(c_y)_{y\in\Z_{\leq 1}}$, starting at $\beta-1$ at time 1.
	Fix a sequence of consistent colored labels $(k^c_i)_{i\in[n]}$.
	Note that 
	\begin{multline*}
	\P\left(({\bm{X}}^{\overleftarrow{\bm c}}_{i})_{i\in[n]}=(k^c_i)_{i\in[n]},{\bm{X}}_{n+1}=\beta\right)=\P\left(({\bm{X}}^{\overleftarrow{\bm c}}_{i})_{i\in[n]}=(k^c_i)_{i\in[n]}\right)\cdot \P\left({\bm{X}}_{n+1}=\beta|{\bm{X}}_{n}=k_n\right)\\
	=\left(p^{n-1} \cdot q^{k_n-\beta+1}\right)\cdot\left(p \cdot q^{\beta-k_n} \right)
	=p^n\cdot q.
	\end{multline*} 
	Since the  probability above is independent of $(k^c_i)_{i\in[n]}$, and every possible path of the walk $({\bm{X}}^{\overleftarrow{\bm c}}_{i}|({\bm{X}}_{j})_{j\in[2,n]}\geq \beta,{\bm{X}}_{n+1}=\beta)_{i\in[n]}$ corresponds to exactly one path in the generating tree starting from the root (thanks to \cref{eq:ryuhfweubfweodq}), we can conclude that 
	\begin{equation*}
	\GG^{-1}(\bm\sigma_n)\stackrel{d}{=}\left({\bm{X}}^{\overleftarrow{\bm c}}_{i}\middle|({\bm{X}}_{j})_{j\in[2,n]}\geq \beta,{\bm{X}}_{n+1}=\beta\right)_{i\in[n]},
	\end{equation*}
	where $\bm\sigma_n$ is a uniform permutation in $\mathcal{C}$ of size $n$. 
\end{proof}

\subsubsection{The statement of the main result}

We can now state our main result.

\begin{thm}
	\label{thm:main_thm_CLT}
	Let $\mathcal{C}$ be a family of permutations that satisfies Assumptions \ref{ass1}, \ref{ass2} and \ref{ass3}. Let $({\bm{Y}}^{\bm c_i}_i)_{i\geq1}$ be a sequence of i.i.d.\ random variables distributed as ${\bm{Y}}^{\bm c}.$ Let $\bm{\sigma}_n$ be a uniform random permutation in $\mathcal{C}^n,$ for all $n\in\Z_{>0}.$
	Then, for all $\pi\in\mathcal{S},$ we have the following central limit theorem,
	\begin{equation}
	\label{wfowrbfv3}
	\frac{\coc(\pi,\bm{\sigma}_n)-n\mu_{\pi}}{\sqrt n}\stackrel{d}{\longrightarrow}\bm{\mathcal N}(0,\gamma_{\pi}^2),
	\end{equation}
	where $\mu_{\pi}=\P\left(\Pat\left({\bm{Y}}_1^{\bm c},\dots,{\bm{Y}}^{\bm c}_{|\pi|}\right)=\pi\right)$ and $\gamma_{\pi}^2=\beta^2-\frac{\rho^2}{\sigma^2}$ with
	\begin{align*}
	&\sigma^2=\Var\left({\bm{Y}}_1\right),\\
	&\rho=\E\left[\mathds{1}_{\left\{\Pat\left({\bm{Y}}_1^{\bm c},\dots,{\bm{Y}}^{\bm c}_{|\pi|}\right)=\pi\right\}}\cdot \sum_{j=1}^{|\pi|}\bm{Y}_j\right],\\
	&\beta^2=2\nu+\mu_\pi-(2|\pi|-1)\cdot\mu_\pi^2,\quad\text{for}\\
	&\nu=\sum_{s=2}^{|\pi|}\sum_{\substack{
			(y^{c}_i)_{i\in[|\pi|]},(\ell^{d}_i)_{i\in[|\pi|]}\in\Pat^{-1}(\pi)\\
			\text{s.t. } ({y}^{c}_s,\dots,{y}^{c}_{|\pi|})=({\ell}^{c}_1,\dots,{\ell}^{d}_{|\pi|-s+1})}}
	\P\left(
	({\bm{Y}}^{\bm c}_{i})_{i\in[|\pi|+s-1]}=({y}^c_1,\dots,{y}^c_{|\pi|},{\ell}^d_{|\pi|-s+2},\dots,{\ell}^d_{|\pi|})\right).
	\end{align*}
\end{thm}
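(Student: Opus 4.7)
The plan is to translate the problem via \cref{ass2} into a CLT for window statistics of a one-dimensional random walk conditioned to stay above $\beta$ and to terminate at $\beta$, and then use \cref{ass3} to identify these walk statistics with the consecutive pattern counts of $\bm\sigma_n$. First I would work with the unconditioned i.i.d.\ colored jumps $(\bm Y_j^{\bm c})_{j\geq 1}$ and the window indicator $\mathds{1}_m^{\Pat} \coloneqq \mathds{1}\{\Pat(\bm Y_m^{\bm c},\dots,\bm Y_{m+|\pi|-1}^{\bm c})=\pi\}$, which form a stationary $(|\pi|-1)$-dependent sequence with mean $\mu_\pi$. A direct computation of covariances yields
\begin{equation*}
\tfrac{1}{n}\Var\Bigl(\sum_m \mathds{1}_m^{\Pat}\Bigr) \longrightarrow (\mu_\pi - \mu_\pi^2) + 2\sum_{s=2}^{|\pi|}\bigl(\E[\mathds{1}_1^{\Pat} \mathds{1}_s^{\Pat}]-\mu_\pi^2\bigr) = \beta^2,
\end{equation*}
where each cross-moment $\E[\mathds{1}_1^{\Pat}\mathds{1}_s^{\Pat}]$, expanded as a sum over pairs of jump sequences that agree on their overlap, contributes exactly the corresponding term of $\nu$. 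The classical $m$-dependent CLT then provides the joint Gaussian limit $\tfrac{1}{\sqrt n}(N_n - n\mu_\pi,\, S_n) \xrightarrow{d} \bm{\mathcal N}(0,\Sigma)$ with $\Sigma = \bigl(\begin{smallmatrix}\beta^2 & \rho \\ \rho & \sigma^2\end{smallmatrix}\bigr)$, where $N_n = \sum_m \mathds{1}_m^{\Pat}$ and $S_n = \sum_{j=1}^n \bm Y_j$; the off-diagonal entry equals $\rho$ because only the $|\pi|$ jumps lying in the window of $\mathds{1}_m^{\Pat}$ contribute to $\Cov(\mathds{1}_m^{\Pat},\bm Y_j)$, which is precisely the expression for $\rho$ stated in the theorem.

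Next I would incorporate the conditioning imposed by \cref{ass2}. The requirement $\bm X_{n+1}=\beta$ is equivalent to $S_n=1$, an event of probability $\Theta(n^{-1/2})$ by a local CLT. Applying the standard Gaussian conditioning formula to the joint limit above converts it into the conditional CLT $\tfrac{1}{\sqrt n}(N_n - n\mu_\pi) \mid \{S_n=1\}\xrightarrow{d}\bm{\mathcal N}(0,\beta^2 - \rho^2/\sigma^2)=\bm{\mathcal N}(0,\gamma_\pi^2)$. The further positivity conditioning $\{\bm X_j\geq \beta,\, j\in[2,n]\}$ I would handle via a Vervaat-type cyclic decomposition: re-rooting the unconditioned bridge at its almost-surely unique minimum realizes the excursion with the required conditional law, and window statistics of fixed length $|\pi|$ differ between the bridge and its re-rooted version by at most $O(|\pi|)$ many windows that straddle the cut. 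Hence the positivity conditioning leaves the Gaussian limit unchanged.

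Finally I would transfer the walk-level result back to the permutation via \cref{ass3}: writing $\coc(\pi,\bm\sigma_n) = N_n + R_n$, the error term $R_n$ is supported on positions whose window intersects $\{\bm X_i \leq c(|\pi|)\}$. By Donsker's invariance principle the conditioned walk rescales to a Brownian excursion, whose Ray--Knight local time process vanishes at the origin; consequently $\#\{i\in[n]:\bm X_i\leq c(|\pi|)\}$ remains $O_{\P}(1)$, so $R_n=O_{\P}(1)=o_{\P}(\sqrt n)$ and Slutsky's theorem closes the argument. The \textbf{main obstacle} is the second step: transferring the unconditioned joint CLT through \emph{both} conditionings simultaneously — fixing the endpoint (a polynomially-small event handled by a local CLT) and imposing non-negativity of the walk — requires careful bridge-to-excursion estimates, and is where most of the technical work in \cref{lem:lemma3} (in \cref{sect:CTL_increments}) should lie.
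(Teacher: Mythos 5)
Your proposal matches the paper's proof architecture almost step for step: the $m$-dependent CLT for the joint vector $\bigl(\sum_m \mathds{1}_m^{\Pat} - n\mu_\pi,\, S_n\bigr)$, the cycle-lemma (Vervaat) argument to absorb the positivity constraint at the cost of an a.s.\ bounded correction, and the use of \cref{ass3} together with a boundary estimate to transfer the walk-level CLT back to $\coc(\pi,\bm\sigma_n)$. Two comments on execution. First, you correctly flag the endpoint conditioning as the hard step; the paper closes this gap in \cref{lem:lemma3} by introducing the decorrelated statistic $\bar{\bm S}_n = \bm S_n - \tfrac{\rho}{\sigma^2}\bm X_n$, applying the local limit theorem only to the tail increment $\bm X_{n+1}-\bm X_{n'}$ (with $n'=\lfloor \alpha n\rfloor$, $\alpha<1$) to show $\bar{\bm S}_{n'}/\sqrt n$ retains its Gaussian limit under $\{\bm X_{n+1}=0\}$, treating the remainder $\bm S_n - \bar{\bm S}_{n'}$ by time reversal, and finally letting $\alpha\to 1$ via the converging-together theorem \cite[Theorem~3.1]{MR1700749}. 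That three-move argument is the rigorous form of your ``standard Gaussian conditioning formula.'' Second, your Ray--Knight justification for the boundary bound is both stronger than needed and not quite rigorous as stated: Ray--Knight controls local times of the scaled excursion at macroscopic levels, not the discrete visit count to a fixed low window $[0,c(|\pi|)]$, and passing from Donsker convergence to a pointwise $O_{\P}(1)$ visit count requires a separate local estimate. The paper instead proves in \cref{prop:prop_labels_big}, via a union bound together with the local limit theorem for random walks conditioned to stay positive \cite[Theorem~3]{MR2449127}, that $\bm X_i > c$ simultaneously for all $i\in[a_n,n-a_n]$ with high probability for any $a_n\to\infty$; choosing $a_n=o(\sqrt n)$ yields the $o_{\P}(\sqrt n)$ error term, which is all that Slutsky requires.
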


The proof of this theorem can be found in Section \ref{sect:main_thm_proof}. A consequence of the above result is the local limit result in \cref{corl:main_thm} below. We recall that a detailed presentation of the local topology for permutations is given in \cref{sect:appe_local_topo}. In particular see Definitions \ref{defn:weakweakconv} and \ref{strongconv} for explanations on the annealed and quenched Benjamini--Schramm convergence.

\begin{cor}\label{corl:main_thm}
	Let $\mathcal{C}$ be a family of permutations encoded by a generating tree that satisfies Assumptions \ref{ass1}, \ref{ass2} and \ref{ass3}. There exists a random infinite rooted permutation $\bm{\sigma}^\infty_{\mathcal{C}}$ such that  
	$$\mathcal{L}aw\big((\bm{\sigma}_n,\bm{i}_n)\big|\bm{\sigma}_n\big)\stackrel{d}{\to}\mathcal{L}aw(\bm{\sigma}^\infty_{\mathcal{C}})
	\quad\text{and}\quad
	(\bm{\sigma}_n,\bm{i}_n)\stackrel{d}{\to}\bm{\sigma}^\infty_{\mathcal{C}},$$
	i.e.\ $\bm{\sigma}_n$ converges both in the quenched and annealed sense.	
\end{cor}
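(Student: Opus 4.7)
The plan is to deduce the corollary from \cref{thm:main_thm_CLT} via the characterization of quenched local convergence recalled in \cref{thm:local_conv_perm_charact}. The strategy has three steps: first extract a law of large numbers from the CLT, then invoke the characterization to obtain quenched convergence, and finally use that quenched convergence always implies annealed convergence.

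\textbf{Step 1: From CLT to LLN.} For every fixed pattern $\pi\in\mathcal{S}$, \cref{thm:main_thm_CLT} gives
\[
\frac{\coc(\pi,\bm{\sigma}_n)-n\mu_\pi}{\sqrt{n}}\xrightarrow{d}\bm{\mathcal N}(0,\gamma_\pi^2).
\]
Dividing by $\sqrt n$ and applying Slutsky's lemma (since $1/\sqrt n\to 0$ deterministically), we obtain
\[
\frac{\coc(\pi,\bm{\sigma}_n)}{n}-\mu_\pi\xrightarrow{P}0,\qquad\text{for every }\pi\in\mathcal{S}.
\]
This is a law of large numbers for the proportion of consecutive occurrences of every fixed pattern, with explicit deterministic limit $\mu_\pi=\P\bigl(\Pat({\bm{Y}}_1^{\bm c},\dots,{\bm{Y}}^{\bm c}_{|\pi|})=\pi\bigr)$.

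\textbf{Step 2: Apply the quenched characterization.} By \cref{thm:local_conv_perm_charact}, quenched Benjamini--Schramm convergence of $(\bm\sigma_n)_{n\geq 1}$ is equivalent to the convergence in probability of $\coc(\pi,\bm\sigma_n)/n$ to a deterministic limit for every pattern $\pi$. Step 1 provides exactly this, with the family $(\mu_\pi)_{\pi\in\mathcal{S}}$ as limits. The same characterization produces a random infinite rooted permutation $\bm{\sigma}^\infty_{\mathcal{C}}$ whose law is characterized by the property that, for every $\pi$, the probability that the root-centered pattern of size $|\pi|$ in $\bm{\sigma}^\infty_{\mathcal{C}}$ equals $\pi$ is precisely $\mu_\pi$. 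Hence
\[
\mathcal{L}aw\bigl((\bm{\sigma}_n,\bm{i}_n)\bigm|\bm{\sigma}_n\bigr)\xrightarrow{d}\mathcal{L}aw(\bm{\sigma}^\infty_{\mathcal{C}}).
\]

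\textbf{Step 3: Annealed convergence.} It is a general (and easy) fact that quenched convergence implies annealed convergence: integrating out the conditional law against the law of $\bm\sigma_n$ and applying dominated convergence to bounded continuous test functions on the local topology yields $(\bm{\sigma}_n,\bm{i}_n)\xrightarrow{d}\bm{\sigma}^\infty_{\mathcal{C}}$.

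The only mildly delicate point is that the proportions $\mu_\pi$ obtained from the CLT are automatically consistent (they are pattern densities of the well-defined conditioned random-walk model) and therefore do define a legitimate random infinite rooted permutation; this is absorbed into the statement of \cref{thm:local_conv_perm_charact}, so no separate argument is required. The heart of the argument is thus just the passage from the CLT rate $\sqrt n$ to the LLN, which is immediate, followed by a black-box invocation of the local-topology characterization proved in \cite{borga2018local}.
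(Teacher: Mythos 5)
Your proposal is correct and follows essentially the same route as the paper: the CLT of \cref{thm:main_thm_CLT} is downgraded to a law of large numbers $\coc(\pi,\bm{\sigma}_n)/n\xrightarrow{P}\mu_\pi$, the quenched convergence then follows from the deterministic-limit characterization (\cref{detstrongbsconditions}, which is the precise statement to cite rather than \cref{thm:local_conv_perm_charact}), and the annealed statement is obtained as a consequence of the quenched one. The only difference is that you spell out the Slutsky step and the quenched-implies-annealed argument, which the paper treats as immediate.
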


\begin{rem}
	Note that \cref{corl:main_thm} states the existence of a random infinite rooted permutation $\bm{\sigma}^\infty_\mathcal{C}$ without explicitly constructing this limiting object. Nevertheless, we provide an explicit construction of $\bm{\sigma}^\infty_\mathcal{C}$ in Section \ref{sec:construction}. 
\end{rem}

\subsection{Future projects and open questions}\label{sect:fututre projects}

We collect some ideas that we would like to investigate in future projects, together with some open questions.

\begin{itemize}
	\item A natural question related to the result stated in \cref{thm:main_thm_CLT} is the following: is it possible to prove joint convergence in distribution of consecutive pattern densities to a Gaussian vector? We believe that the fundamental step to reach this goal is to prove a ``multi-dimensional" version of \cref{lem:lemma3}. In our opinion this should not present great difficulties (except for the presence of a more complicated notation and an higher complexity of the various formulas/expressions involved in the proof).

	\item We believe that our Assumptions \ref{ass1} and \ref{ass2}, in particular the fact that the generating tree has one-dimensional labels, can be relaxed. Our assumptions reduce the problem on permutations to the study of one-dimensional random walks conditioned to be positive. Relaxing Assumptions \ref{ass1} and \ref{ass2} to multi-dimensional labels, leads to the study of \emph{multi-dimensional random walks conditioned to stay in a cone}. We believe that the work of Denisov and Wachtel~\cite{MR3342657} furnishs some useful results to investigate this problem, specifically to prove an analogue of the results proved in \cref{sect:CTL_increments} for random walks in cones.
	
	A motivation for studying this more general framework is given by that fact that several families of permutations have been enumerated using multi-dimensional generating trees, for instance the famous case of Baxter permutations (see \cite{bousquet2003four}).
	
	\item It is not evident to us that the expressions of the variances $\gamma_{\pi}^2$ in \cref{thm:main_thm_CLT} satisfy $\gamma_{\pi}^2>0$. It would be interesting to investigate this problem. Moreover, given a specific family of permutation $\mathcal{C}$, it would be interesting to explicitly compute the expressions for $\mu_\pi$, for all $\pi\in\mathcal{C}$, as done for instance for the families $\Av(123)$ and $\Av(132)$ in \cite[Theorem 1.8,Theorem 1.9]{borga2018local}. We highlight that the coefficients $\mu_\pi$ satisfies several relations (for instance the trivial relations $0\leq\mu_\pi\leq 1$ and $\sum_{\pi\in\mathcal{S}^k}\mu_\pi=1$) recently described in \cite{borga2019feasible,borga2020feasible}.

	\item In this paper we use generating trees to establish central limit theorems and local limit results. We believe that they can be also used as a nice tool for establishing scaling limits (in the permuton sense) of random permutations. Together with Mickaël Maazoun, we explored in \cite{Baxterscallim} the permuton limit of Baxter permutations. In this work we used as a key tool a bijection between Baxter permutations and walks in cones that was already available in the literature (and that does not come from generating trees). Nevertheless, we believe that the techniques developed in \cite{Baxterscallim} apply also to other family of permutations where a bijection with walks in cones is available, and for finding such bijections, generating trees might be really useful.
\end{itemize}

\section{Proof of the main result}
\label{sec:locthm}

This section is organized as follows:

\begin{itemize}
	\item In \cref{sect:CTL_increments} we collect all the results on conditioned random walks that we need for the proof of our main result.
	\item In \cref{sect:main_thm_proof} we prove \cref{thm:main_thm_CLT} and \cref{corl:main_thm}.
	\item Finally, in \cref{sec:construction} we exhibit an explicit construction of the limiting object $\bm{\sigma}^{\infty}_{\mathcal{C}}$ (appearing in \cref{corl:main_thm}) as a random total order $\bm{\preccurlyeq}_{\mathcal{C}}$ of $\mathbb{Z}.$
\end{itemize} 

\subsection{Results on conditioned random walks}\label{sect:CTL_increments}

We recall that $({\bm{X}}^{\overleftarrow{\bm c}}_{i})_{i\in\Z_{\geq 1}}=({\bm{X}}^{\bm c_{i-1}}_{i})_{i\in\Z_{\geq 1}}$ denotes the random colored walk defined in \cref{eq:definition_walk}. We assume that the distribution of its increments $(\alpha_y)_{y\in\Z_{\leq 1}}$ is centered with finite variance and that ${\bm X}_1=-1$ (i.e. $\beta=0$ in \cref{eq:definition_walk}). We also recall that we denote the colored increments of $({\bm{X}}^{\bm c_{i-1}}_{i})_{i\in\Z_{\geq 1}}$ by ${\bm{Y}}^{\bm c_i}_{i}$ in such a way that $\bm{Y}_i={\bm{X}}_{i+1}-{\bm{X}}_{i},$ for all $i\in\Z_{\geq 1}.$

For simplicity we also assume that the span of the distribution $(\alpha_y)_{y\in\Z_{\leq 1}}$ is 1 (setting $y'=\max\{y\in\Z_{\leq 1}:\alpha_{y}> 0\}$, the \emph{span} of $(\alpha_y)_{y\in\Z_{\leq 1}}$ is the 
greatest common divisor of the set $\{y'-y:y\in{\Z_{< y'}},\alpha_{y}> 0\}$), omitting the minor (and standard) modifications in the general case. For instance, if the span is $h$, when using local limits results (as in \cref{eq:locallimthm}), one has to replace the expression  
\begin{equation*}
\P\left({\bm{X}}_{n}=\ell\right)=\frac{1}{\sqrt{2 \pi \sigma^2 n}}\left(e^{-\tfrac{\ell^2}{2n\sigma^2}}+o(1)\right),\quad\text{for all $\ell\in\Z$,}
\end{equation*}
with
\begin{equation*}
\P\left({\bm{X}}_{n}=\ell\right)=\frac{h}{\sqrt{2 \pi \sigma^2 n}}\left(e^{-\tfrac{\ell^2}{2n\sigma^2}}+o(1)\right),\quad\text{for all $\ell\in h\Z-1$}.
\end{equation*}

\subsubsection{Random walks conditioned to stay positive are larger than any fixed constant}\label{sect:CTL_increments1}

We state our first result on conditioned random walks, which will be helpful in the proof of \cref{thm:main_thm_CLT} in connection with \cref{ass3}.

\begin{prop}
	\label{prop:prop_labels_big}
	Let $c>0$ be a constant. Let $(a_n)_{n\in\Z_{\geq 0}}$ be a sequence of integers such that $a_n<n/2$ for all $n\in\Z_{\geq 0}$ and $a_n\to \infty$. Then, as $n$ tends to infinity,
	\begin{equation}\label{eq:goal_proof}
	\P\left({\bm{X}}_{i}>c \text{ for all } i\in[a_n,n-a_n]\Big|({\bm{X}}_{j})_{j\in[2,n]}\geq 0,{\bm{X}}_{n+1}=0\right)\to 1.
	\end{equation}
\end{prop}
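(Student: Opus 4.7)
The strategy is to establish the uniform estimate
\[ \P({\bm X}_i \leq c \mid E_n) = O\!\left(\tfrac{n^{3/2}}{(i(n-i))^{3/2}}\right) \quad \text{for } i \in [a_n, n - a_n], \]
where $E_n := \{({\bm X}_j)_{j \in [2, n]} \geq 0,\, {\bm X}_{n+1} = 0\}$ denotes the excursion event, and then to derive the claim via a union bound, using that $\sum_{i \geq a_n} i^{-3/2} = O(a_n^{-1/2}) \to 0$.

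The first observation is that since ${\bm X}_1 = -1$ and the jumps are supported on $\Z_{\leq 1}$, the event $E_n$ forces ${\bm Y}_1 = 1$ and hence ${\bm X}_2 = 0$. Applying the Markov property at time $i \in [3, n]$ yields the factorization
\[ \P({\bm X}_i = \ell,\, E_n) = \P({\bm Y}_1 = 1) \cdot M_{i-2}(\ell) \cdot q_{n - i + 1}(\ell), \]
where $M_m(\ell) := \P_0(X_1, \ldots, X_{m-1} \geq 0,\, X_m = \ell)$ is a meander probability and $q_m(\ell) := \P_\ell(X_1, \ldots, X_{m-1} \geq 0,\, X_m = 0)$ is a first-passage bridge probability. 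Classical local limit theorems for random walks conditioned to stay positive (e.g.\ Caravenna's 2005 LLT, or Vatutin--Wachtel) provide $M_m(\ell),\, q_m(\ell) = O(m^{-3/2})$ for each fixed $\ell \geq 0$, together with the partition function estimate $\P(E_n) \sim C_0 n^{-3/2}$. Summing the factorization over the finite range $\ell \in \{0, 1, \ldots, \lfloor c \rfloor\}$ and dividing by $\P(E_n)$ gives the desired uniform bound; the hypothesis $a_n \to \infty$ ensures that both $i - 2$ and $n - i + 1$ diverge, so the asymptotic regime of the LLT applies.

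With the uniform estimate in hand, splitting the index range as $[a_n, n - a_n] = [a_n, \lfloor n/2 \rfloor] \cup [\lfloor n/2 \rfloor, n - a_n]$ and using $i(n-i) \geq in/2$ on the first half (with its symmetric counterpart on the second), the union bound yields
\[ \P\!\left(\exists i \in [a_n, n - a_n]:\, {\bm X}_i \leq c \,\middle|\, E_n\right) \leq C'(c) \sum_{i \geq a_n} i^{-3/2} = O(a_n^{-1/2}) \xrightarrow[n \to \infty]{} 0, \]
which, by passing to the complementary event, is exactly \cref{eq:goal_proof}.

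\textbf{Main obstacle.} The delicate part is invoking the quantitative local limit theorem for walks conditioned to stay positive, which supplies the $m^{-3/2}$ decay of the meander and first-passage-bridge probabilities uniformly for $\ell$ in the finite range $\{0, \ldots, \lfloor c \rfloor\}$, as well as the sharp $n^{-3/2}$ asymptotic of the partition function. These results are classical, but one must verify that the standing assumptions on the walk (centered, finite variance, span $1$, supported on $\Z_{\leq 1}$, with ${\bm X}_1 = -1$) match the hypotheses of the cited theorems---a check that is essentially built into the setup at the start of \cref{sect:CTL_increments}.
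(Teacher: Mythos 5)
Your proof is correct and follows essentially the same route as the paper: union bound, Markov property at time $i$ to factor the joint probability into a meander probability on $[2,i]$ and a (time-reversed) meander/first-passage-bridge probability on $[i,n+1]$, then the $m^{-3/2}$ asymptotics from the local limit theorem for walks conditioned to stay positive, yielding $O(a_n^{-1/2})\to 0$. The only cosmetic differences are that you isolate the forced step ${\bm X}_2=0$ explicitly and phrase the second factor as a first-passage bridge $q_m(\ell)$ rather than via the paper's reversed-walk notation $\hat{\bm X}$; these are identical objects after time-reversal.
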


\begin{proof} 
	Note that, using a union bound, we have
	\begin{multline}\label{eq:step1}
	\P\left({\bm{X}}_{i}>c\text{ for all } i\in[a_n,n-a_n]\Big|({\bm{X}}_{j})_{j\in[2,n]}\geq 0,{\bm{X}}_{n+1}=0\right)\\
	\geq 1-\sum_{i\in[a_n,n-a_n]}\P\left({\bm{X}}_{i}\leq c\Big|({\bm{X}}_{j})_{j\in[2,n]}\geq 0,{\bm{X}}_{n+1}=0\right).
	\end{multline}
	Fix now $i\in[a_n,\lfloor n/2\rfloor]$.
	Denoting with $\hat{\bm{X}}$ the time-reversed walk of ${\bm{X}}$ starting at 0 at time 1, we have that
	\begin{multline*}
	\P\left({\bm{X}}_{i}\leq c\Big|({\bm{X}}_{j})_{j\in[2,n]}\geq 0,{\bm{X}}_{n+1}=0\right)=
	\sum_{y=0}^c\frac{\P\left({\bm{X}}_{i}=y,{\bm{X}}_{n+1}=0,({\bm{X}}_{j})_{j\in[2,n]}\geq 0\right)}{\P\left({\bm{X}}_{n+1}=0,({\bm{X}}_{j})_{j\in[2,n]}\geq 0\right)}\\
	=\sum_{y=0}^c\frac{\P\left({\bm{X}}_{i}=y,({\bm{X}}_{j})_{j\in[2,i]}\geq 0\right)\P\left(\hat{\bm{X}}_{n+1-i}=y,(\hat{\bm{X}}_{j})_{j\in[n+1-i]}\geq 0\right)}{\P\left({\bm{X}}_{n+1}=0,({\bm{X}}_{j})_{j\in[2,n]}\geq 0\right)}.
	\end{multline*}
	Using a local limit theorem for random walks conditioned to stay positive (see \cite[Theorem 3]{MR2449127}) we have that
	\begin{align*}
	&\P\left({\bm{X}}_{i}=y,({\bm{X}}_{j})_{j\in[2,i]}\geq 0\right)\sim C_1(y)\cdot i^{-3/2},\\
	&\P\left(\hat{\bm{X}}_{n+1-i}=y,(\hat{\bm{X}}_{j})_{j\in[n+1-i]}\geq 0\right)\sim C_2(y)\cdot(n-i)^{-3/2},\\
	&\P\left({\bm{X}}_{n+1}=0,({\bm{X}}_{j})_{j\in[2,n]}\geq 0\right)\sim C_3\cdot n^{-3/2}.
	\end{align*}
	Since $i\in[a_n,\lfloor n/2\rfloor]$, $a_n\to\infty,$ and $y\in[0,c]\cap \Z$, we obtain that
	\begin{equation*}
	\P\left({\bm{X}}_{i}\leq c\Big|({\bm{X}}_{j})_{j\in[2,n]}\geq 0,{\bm{X}}_{n+1}=0\right)=O(i^{-3/2}),
	\end{equation*}
	uniformly for all $i\in[a_n,\lfloor n/2\rfloor]$.
	This bound together with \cref{eq:step1} leads to
	\begin{equation*}
	\P\left({\bm{X}}_{i}>c\text{ for all } i\in[a_n,\lfloor n/2\rfloor]\Big|({\bm{X}}_{j})_{j\in[2,n]}\geq 0,{\bm{X}}_{n+1}=0\right)\geq 1-O((a_n)^{-1/2}).
	\end{equation*}
	Using the same techniques, we have the same bound for $i\in[\lfloor n/2\rfloor,n-a_n]$, and we can conclude the proof.
\end{proof}

\subsubsection{A CLT for consecutive increments in random walks conditioned to stay positive}\label{sect:CTL_increments2}

In this section we prove the following CLT for occurrences of patterns of consecutive colored increments in random walks conditioned to stay positive.

\begin{prop}\label{lem:lemma3}
	Fix $h\in\Z_{>0}$. Let ${\mathcal A}$ be a (possibly infinite) family of sequences of $h$ colored increments for the walk $({\bm{X}}^{\overleftarrow{\bm c}}_i)_{i\geq 1}$.  Then
	\begin{equation*}
	\left(\frac{\sum_{j\in[1,n-h+1]}\mathds{1}_{\left\{({\bm{Y}}^{\bm c_i}_i)_{i\in[j,j+h-1]}\in{\mathcal A}\right\}}-n\cdot \mu_{{\mathcal A}}}{\sqrt n}\Bigg|({\bm{X}}_i)_{i\in[2,n]}\geq 0,{\bm{X}}_{n+1}=0\right)\stackrel{d}{\longrightarrow}\bm{\mathcal{N}}(0,\gamma_{{\mathcal A}}^2),
	\end{equation*}
	where $\mu_{{\mathcal A}}=\P\big(({\bm{Y}}^{\bm c_1}_1,\dots,{\bm{Y}}^{\bm c_h}_{h})\in {\mathcal A}\big)$ and $\gamma_{{\mathcal A}}^2=\beta^2-\frac{\rho^2}{\sigma^2}$ with
	\begin{align*}
	&\sigma^2=\Var({\bm{Y}}_1),\\
	&\rho=\E\left[\mathds{1}_{\left\{({\bm{Y}}^{\bm c_i}_{i})_{i\in[h]}\in{\mathcal A}\right\}}\cdot\left({\bm{X}}_{h+1}+1\right)\right],\\
	&\beta^2=2\nu+\mu_{{\mathcal A}}-(2h-1)\mu_{{\mathcal A}}^2,\quad\text{for}\\
	&\nu=\sum_{s=2}^h\sum_{\substack{
			({y}^c_i)_{i\in[h]},({\ell}^d_i)_{i\in[h]}\in{\mathcal A}\text{ s.t.}\\
			({y}^c_s,\dots,{y}^c_h)=({\ell}^d_1,\dots,{\ell}^d_{h-s+1})}}
	\P\left(({\bm{Y}}^{\bm c}_{i})_{i\in[h+s-1]}=({y}^c_1,\dots,{y}^c_h,{\ell}^d_{h-s+2},\dots,{\ell}^d_h)\right).
	\end{align*}
\end{prop}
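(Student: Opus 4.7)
The plan is to derive the CLT by successive conditionings, starting from an unconditioned joint CLT for the pair $(S_n, \bm{X}_{n+1})$, then adding the bridge endpoint $\bm{X}_{n+1}=0$, and finally the positivity constraint. Throughout, write $S_n := \sum_{j=1}^{n-h+1}\mathds{1}_{\mathcal{A}_j}$ with $\mathcal{A}_j := \{(\bm{Y}^{\bm c_i}_i)_{i\in[j,j+h-1]}\in \mathcal{A}\}$.

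First, I would prove the unconditioned joint CLT. The stationary array $(\mathds{1}_{\mathcal{A}_j}, \bm{Y}_j)_j$ is $h$-dependent (since $\mathcal{A}_j,\mathcal{A}_k$ are independent as soon as $|j-k|\geq h$), so the Hoeffding--Robbins CLT for $m$-dependent sequences, applied via Cramér--Wold, yields
$$\left(\frac{S_n - n\mu_{\mathcal{A}}}{\sqrt{n}},\, \frac{\bm{X}_{n+1}+1}{\sqrt{n}}\right) \xrightarrow{d} \mathcal{N}\!\left(\mathbf{0},\,\begin{pmatrix}\beta^2 & \rho\\ \rho & \sigma^2\end{pmatrix}\right).$$
A stationary computation identifies the covariances with the stated formulas: on the one hand $n^{-1}\Var(S_n)\to \mu_{\mathcal{A}}(1-\mu_{\mathcal{A}})+2\sum_{s=2}^h\Cov(\mathds{1}_{\mathcal{A}_1},\mathds{1}_{\mathcal{A}_s})$, which rearranges to $\mu_{\mathcal{A}}+2\nu-(2h-1)\mu_{\mathcal{A}}^2=\beta^2$ once one recognises the overlapping-window sum as $\nu$; on the other hand, using $\bm{X}_1=-1$, $\E[\bm{Y}]=0$ and stationarity, $n^{-1}\Cov(S_n,\bm{X}_{n+1})\to\sum_{k=1}^h\E[\mathds{1}_{\mathcal{A}_1}\bm{Y}_k]=\E[\mathds{1}_{\mathcal{A}_1}(\bm{X}_{h+1}+1)]=\rho$.

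Next I would pass to the bridge. Combining the joint CLT with Stone's local limit theorem for $\bm{X}_{n+1}$ (using the span-$1$ assumption) and a standard bivariate local-global refinement gives that, conditionally on $\bm{X}_{n+1}=0$, the rescaled sum $(S_n - n\mu_{\mathcal{A}})/\sqrt n$ converges in distribution to a centred Gaussian whose variance is read off from the Gaussian conditioning formula applied to the covariance above, namely $\gamma_{\mathcal{A}}^2 = \beta^2 - \rho^2/\sigma^2$.

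Finally I would add the positivity constraint by a Vervaat-style cyclic-shift argument. Conditional on $\bm{X}_{n+1}=0$, the increments lie in $\Z_{\leq 1}$ and sum to $1$, so by the Dvoretzky--Motzkin cyclic lemma exactly one of the $n$ cyclic shifts of the colored increment sequence produces a walk with $(\bm{X}_j)_{j\in[2,n]}\geq 0$. This yields a bijective, measure-preserving correspondence between bridges and positive bridges (colors are carried along with their values untouched), so the positive-bridge law is the pushforward of the bridge law under the shift singled out by the cyclic lemma. The crucial point is that the circularised count $\tilde S_n := \sum_{j=1}^n\mathds{1}_{\mathcal{A}_j}$ (with wrap-around windows) is exactly shift-invariant, while $|\tilde S_n - S_n|\leq h$ deterministically since at most $h$ windows straddle the split point. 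Hence $|S_n^{+} - S_n^{\text{bridge}}|\leq 2h = o(\sqrt n)$, and the CLT from the previous step transfers verbatim to the positive bridge with the same Gaussian limit $\mathcal{N}(0,\gamma_{\mathcal{A}}^2)$.

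The main obstacle is the bridge-conditioning step: turning the unconditioned joint CLT into a conditional CLT given $\bm{X}_{n+1}=0$ requires a bivariate refinement of the local limit theorem together with uniform estimates on joint characteristic functions in an annulus around the origin. Such refinements are classical under the span-$1$ assumption but need some technical care. By contrast, the first step follows from off-the-shelf $m$-dependent CLTs and the third step is elementary combinatorics via the cyclic lemma, so the analytic heart of the argument is concentrated in the bridge step.
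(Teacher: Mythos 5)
Your overall architecture matches the paper's in two of its three steps: the unconditioned joint CLT for the $h$-dependent pair $\bigl(\mathds{1}_{\{(\bm{Y}^{\bm c}_{j+i})_{i\in[0,h-1]}\in\mathcal A\}},\bm{Y}_j\bigr)$ via Hoeffding--Robbins and Cram\'er--Wold, with exactly the covariance computations giving $\beta^2$, $\rho$, $\sigma^2$ (these are correct and coincide with the paper's), and the cycle-lemma transfer between the bridge and the positive bridge, with the $O(h)$ boundary control on the wrapped count (the paper performs the same transfer, on the time-reversed walk and at the start rather than at the end, which is equivalent).

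The genuine gap is the middle step. You reduce everything to a conditional CLT for $S_n$ given $\{\bm{X}_{n+1}=0\}$ and assert that it follows from ``Stone's LLT plus a standard bivariate local-global refinement,'' with ``classical'' characteristic-function estimates. That would be defensible if $(S_n,\bm{X}_{n+1})$ were a sum of i.i.d.\ random vectors, but it is not: $S_n$ is a sum of $h$-dependent indicators, and a limit theorem that is local in the lattice coordinate $\bm{X}_{n+1}$ and integral in $S_n$ for an $h$-dependent array is not an off-the-shelf result; the uniform joint characteristic-function bounds you invoke would themselves constitute a nontrivial new lemma, and as stated this step is unproved. The paper is engineered precisely to avoid it: one fixes $\alpha\in(0,1)$, splits at $n'=\lfloor\alpha n\rfloor$, replaces $\bm{S}_{n'}$ by the decorrelated statistic $\bar{\bm{S}}_{n'}=\bm{S}_{n'}-\tfrac{\rho}{\sigma^2}\bm{X}_{n'}$ (asymptotically independent of $\bm{X}_{n'}$, with variance $\alpha\gamma_{\mathcal A}^2$), and then conditions on $\{\bm{X}_{n+1}=0\}$ using only the classical \emph{univariate} i.i.d.\ local limit theorem applied to the tail increment $\bm{X}_{n+1}-\bm{X}_{n'}$, which factors out of the expectation; the leftover block $\bm{S}_n-\bar{\bm{S}}_{n'}$ is handled by time reversal and shown to contribute variance $O(1-\alpha)$, and one concludes by letting $\alpha\to1$ via \cite[Theorem 3.1]{MR1700749}. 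To complete your route you must either prove the bivariate local-global CLT for $h$-dependent sequences (substantial work, and the ``analytic heart'' you correctly identified) or adopt this splitting/decorrelation device, which requires nothing beyond the tools you already list.
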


The proof of the result above is inspired by the proof of \cite[Lemma 7.1]{MR3432572} that was, in turn, based on a method introduced by Le Cam \cite{MR105735} and Holst \cite{MR628875}. Since our conditions are different, we include a complete proof of our result.

In the proof we denote by $\bm O(1)$ an unspecified sequence of random variables $(\bm \varepsilon_n)_{n\in\Z_{>0}}$ that are bounded by a constant (i.e.\ there exists a constant $C>0$ such that $\bm \varepsilon_n\leq C$ a.s.\ for all $n\in\Z_{>0}$).

\begin{proof}[Proof of \cref{lem:lemma3}]
	Recall that $\hat{\bm X}$ denotes the time-reversed walk of $\bm X$ starting at 0 at time 1. Denote by $\hat{\mathcal A}$ the family obtained from ${\mathcal A}$ by reversing the time and changing the sign of each sequence, i.e.\ if $(y^{c_i}_i)_{i\in [h]}\in\mathcal{A}$ then $(-y^{c_{h-i+1}}_{h-i+1})_{i\in [h]}\in\hat{\mathcal{A}}$. Note that, the increments $(\hat{\bm{Y}}_{i})_{i\geq 1}$ of the walk $\hat{\bm{X}}$ are supported on $\Z_{\geq -1}$ and so using the cycle lemma (see for instance \cite[Lemma 6.1]{MR2245368}) we can cyclically shift the increments of the walk in order to
	ensure that $(\hat{\bm{X}}_i)_{i\in[n]}\geq 0$. We therefore obtain that
	\begin{multline*}
	\left(\sum_{j\in[1,n-h+1]}\mathds{1}_{\left\{(\hat{\bm{Y}}^{\bm c_i}_i)_{i\in[j,j+h-1]}\in\hat{\mathcal A}\right\}}\Bigg|(\hat{\bm{X}}_i)_{i\in[n]}\geq 0,\hat{\bm{X}}_{n+1}=-1\right)\\
	\stackrel{d}{=}\left(\sum_{j\in[1,n-h+1]}\mathds{1}_{\left\{(\hat{\bm{Y}}^{\bm c_i}_i)_{i\in[j,j+h-1]}\in\hat{\mathcal A}\right\}}\Bigg|\hat{\bm{X}}_{n+1}=-1\right)+\bm{O}(1),
	\end{multline*}
	where the error term is due to the fact that the cyclic shift can only effect $\bm{O}(1)$ of the consecutive occurrences (because of possible boundary problems). More precisely, a.s.\ $\bm{O}(1)\leq 2h$.
	As a consequence,
	\begin{multline*}
	\left(\sum_{j\in[1,n-h+1]}\mathds{1}_{\left\{({\bm{Y}}^{\bm c_i}_i)_{i\in[j,j+h-1]}\in{\mathcal A}\right\}}\Bigg|({\bm{X}}_i)_{i\in[2,n]}\geq 0,{\bm{X}}_{n+1}=0\right)\\
	\stackrel{d}{=}\left(\sum_{j\in[1,n-h+1]}\mathds{1}_{\left\{({\bm{Y}}^{\bm c_i}_i)_{i\in[j,j+h-1]}\in{\mathcal A}\right\}}\Bigg|{\bm{X}}_{n+1}=0\right)+\bm{O}(1).
	\end{multline*}
	In words, the above equality implies that we can forget the conditional event $\{({\bm{X}}_i)_{i\in[2,n]}\geq 0\}$ in our analysis.
	
	 We set 
	\begin{equation*}
	g({\bm{Y}}^{\bm c}_j,\dots, {\bm{Y}}^{\bm c}_{j+h-1})\coloneqq\mathds{1}_{\left\{({\bm{Y}}^{\bm c}_{j+i})_{i\in[0,h-1]}\in{\mathcal A}\right\}}.
	\end{equation*}	
	Define the centered sum
	\begin{equation*}
	\bm{S}_n\coloneqq\sum_{j=1}^n\left(g({\bm{Y}}^{\bm c}_j,\dots, {\bm{Y}}^{\bm c}_{j+h-1})-\mu_{{\mathcal A}}\right),
	\end{equation*}
	with the convention that $g({\bm{Y}}^{\bm c}_j,\dots, {\bm{Y}}^{\bm c}_{j+h-1})=0$ if $j>n-h+1$.
	
	We fix $\alpha$ with $0<\alpha<1$ and a sequence $n'=n'(n)$ with $\frac{n'}{n}\to\alpha$, for instance $n'=\lfloor \alpha n\rfloor$.
	By the central limit theorem for $h$-dependent variables (see \cite{MR26771,MR67396}), applied to the random vectors 
	$$\left(g({\bm{Y}}^{\bm c}_j,\dots, {\bm{Y}}^{\bm c}_{j+h-1})-\mu_{{\mathcal A}},{\bm{Y}}_j\right),$$ we have the following unconditioned result
	\begin{equation}\label{eq:clt_uncond}
	\left(\frac{\bm{S}_{n'}}{\sqrt{n}},\frac{{\bm{X}}_{n'}}{\sqrt n}\right)\xrightarrow{d}\bm{\mathcal{N}}\left(0,\alpha\left(\begin{array}{cc}
	\beta^2 & \rho\\
	\rho & \sigma^2
	\end{array}\right)\right),
	\end{equation}
	where
	\begin{align*}
	&\sigma^2=\Var({\bm{Y}}_1),\\
	&\beta^2=\Var\left(g({\bm{Y}}^{\bm c}_1,\dots, {\bm{Y}}^{\bm c}_{h})\right)+2\sum_{s=2}^h\Cov\left(g({\bm{Y}}^{\bm c}_1,\dots, {\bm{Y}}^{\bm c}_{h}),g({\bm{Y}}^{\bm c}_s,\dots, {\bm{Y}}^{\bm c}_{s+h-1})\right),\\
	&\rho=\sum_{s=1}^h\Cov\left(g({\bm{Y}}^{\bm c}_1,\dots, {\bm{Y}}^{\bm c}_{h}), {\bm{Y}}_{s}\right)=\Cov\left(g({\bm{Y}}^{\bm c}_1,\dots, {\bm{Y}}^{\bm c}_{h}), {\bm{X}}_{h+1}+1\right).
	\end{align*}
	
	We first compute $\beta^2$. Note that
	\begin{equation*}
	\Var\left(g({\bm{Y}}^{\bm c}_1,\dots, {\bm{Y}}^{\bm c}_{h})\right)=\E\left[\left(\mathds{1}_{\left\{({\bm{Y}}^{\bm c}_{i})_{i\in[h]}\in{\mathcal A}\right\}}\right)^2\right]-\mu_{{\mathcal A}}^2=\mu_{{\mathcal A}}-\mu_{{\mathcal A}}^2.
	\end{equation*}
	Now fix $2\leq s\leq h,$
	\begin{equation*}
	\Cov\left(g({\bm{Y}}^{\bm c}_1,\dots, {\bm{Y}}^{\bm c}_{h}),g({\bm{Y}}^{\bm c}_s,\dots, {\bm{Y}}^{\bm c}_{s+h-1})\right)=\E\left[\left(\mathds{1}_{\left\{({\bm{Y}}^{\bm c}_{i})_{i\in[h]}\in{\mathcal A}\right\}}\right)\left(\mathds{1}_{\left\{({\bm{Y}}^{\bm c}_{s+i-1})_{i\in[h]}\in{\mathcal A}\right\}}\right)\right]-\mu_{{\mathcal A}}^2.
	\end{equation*}
	Noting that, for $({y}^{c_i}_i)_{i\in[h]},({\ell}^{d_i}_i)_{i\in[h]}\in{\mathcal{A}}$, then $\mathds{1}_{\left\{({\bm{Y}}^{\bm c}_{i})_{i\in[h]}=({y}^c_i)_{i\in[h]},({\bm{Y}}^{\bm c}_{s+i-1})_{i\in[h]}=({\ell}^d_i)_{i\in[h]}\right\}}\neq 0$ only if $({y}^c_s,\dots,{y}^c_h)=({\ell}^d_1,\dots,{\ell}^d_{h-s+1})$, we have
	\begin{multline*}
	\Cov\left(g({\bm{Y}}^{\bm c}_1,\dots, {\bm{Y}}^{\bm c}_{h}),g({\bm{Y}}^{\bm c}_s,\dots, {\bm{Y}}^{\bm c}_{s+h-1})\right)\\
	=\E\left[\sum_{\substack{
			({y}^c_i)_{i\in[h]}, ({\ell}^d_i)_{i\in[h]}\in{\mathcal A}\text{ s.t.}\\
			({y}^c_s,\dots,{y}^c_h)=({\ell}^d_1,\dots,{\ell}^d_{h-s+1})}}
	\mathds{1}_{\left\{({\bm{Y}}^{\bm c}_{i})_{i\in[h+s-1]}=({y}^c_1,\dots,{y}^c_h,{\ell}^d_{h-s+2},\dots,{\ell}^d_h)\right\}}\right]-\mu_{{\mathcal A}}^2,
	\end{multline*}
	and we obtain that $\beta^2=2\nu+\mu_{{\mathcal{A}}}-(2h-1)\mu_{{\mathcal{A}}}^2$, where $\nu$ is defined in the statement of the theorem.
	
	We finally compute $\rho$. Note that since $\E[{\bm{X}}_{h+1}+1]=0$ (see \cref{eq:definition_walk} and recall that $\beta=0$) then
	\begin{equation*}
	\Cov\left(g({\bm{Y}}^{\bm c}_1,\dots, {\bm{Y}}^{\bm c}_{h}), {\bm{X}}_{h+1}+1\right)=\E\left[\mathds{1}_{\left\{({\bm{Y}}^{\bm c}_{i})_{i\in[h]}\in{\mathcal A}\right\}}\cdot\left({\bm{X}}_{h+1}+1\right)\right],
	\end{equation*}
	which corresponds to the expression in the statement of the proposition.
	
	We now define for convenience
	\begin{equation*}
	\bar{\bm{S}}_n\coloneqq\bm{S}_n-\frac{\rho}{\sigma^2}{\bm{X}}_{n}.
	\end{equation*}
	Since we know from \cref{eq:clt_uncond} that
	\begin{equation*}
	\left(\frac{\bm{S}_{n'}}{\sqrt{n}},\frac{{\bm{X}}_{n'}}{\sqrt n}\right)\xrightarrow{d}\bm{\mathcal{N}}\left(0,\alpha\left(\begin{array}{cc}
	\beta^2 & \rho\\
	\rho & \sigma^2
	\end{array}\right)\right),
	\end{equation*}
	then with some basic computations we obtain that
	\begin{equation*}
	\left(\frac{\bar{\bm{S}}_{n'}}{\sqrt{n}},\frac{{\bm{X}}_{n'}}{\sqrt n}\right)\xrightarrow{d}\bm{\mathcal{N}}\left(0,\alpha\left(\begin{array}{cc}
	\beta^2-\frac{\rho^2}{\sigma^2} & 0\\
	0 & \sigma^2
	\end{array}\right)\right).
	\end{equation*}
	In other words, $\frac{\bar{\bm{S}}_{n'}}{\sqrt{n}}$ and $\frac{{\bm{X}}_{n'}}{\sqrt n}$ are jointly asymptotically normal with independent limits $\bm W=\bm{\mathcal{N}}\left(0,\alpha\cdot\gamma_{{\mathcal A}}^2\right)$, where $\gamma_{{\mathcal A}}^2=\beta^2-\frac{\rho^2}{\sigma^2}$, and $\bm R=\bm{\mathcal{N}}(0,\alpha\cdot \sigma^2)$.
	
	Next, let $f$ be any bounded continuous function on $\mathbb R$. Then, using a local limit theorem for random walks (see for instance \cite[Theorem VII.1]{MR0388499}) we have that, uniformly for all $\ell\in\Z$,
	\begin{equation}\label{eq:locallimthm}
	\P\left({\bm{X}}_{n}=\ell\right)=\frac{1}{\sqrt{2 \pi \sigma^2 n}}\left(e^{-\tfrac{\ell^2}{2n\sigma^2}}+o(1)\right),
	\end{equation}
	and so 	
	\begin{equation*}
	\begin{split}
	\E\bigg[f\left(\frac{\bar{\bm{S}}_{n'}}{\sqrt{n}}\right)&\bigg|{\bm{X}}_{n+1}=0\bigg]\\
	&=\frac{\E\left[\sum_jf\left(\frac{\bar{\bm{S}}_{n'}}{\sqrt{n}}\right)\mathds{1}_{\{{\bm{X}}_{n'}=j\}}\mathds{1}_{\{{\bm{X}}_{n+1}-{\bm{X}}_{n'}=-j\}}\right]}{\P\left({\bm{X}}_{n+1}=0\right)}\\
	&=\frac{\sum_j\E\left[f\left(\frac{\bar{\bm{S}}_{n'}}{\sqrt{n}}\right)\mathds{1}_{\{{\bm{X}}_{n'}=j\}}\right]\P\left({\bm{X}}_{n-n'+1}=-j\right)}{\P\left({\bm{X}}_{n+1}=0\right)}\\
	&=\sum_j\E\left[f\left(\frac{\bar{\bm{S}}_{n'}}{\sqrt{n}}\right)\mathds{1}_{\{{\bm{X}}_{n'}=j\}}\right]\sqrt{\frac{n+1}{n-n'+1}}\left(e^{-\tfrac{j^2}{2(n-n')\sigma^2}}+o(1)\right)\\
	&=\sqrt{\frac{n+1}{n-n'+1}}\cdot\E\left[f\left(\frac{\bar{\bm{S}}_{n'}}{\sqrt{n}}\right)e^{-\tfrac{{\bm{X}}_{n'}^2}{2(n-n')\sigma^2}}\right]+o(1)\\
	&\to(1-\alpha)^{-1/2}\cdot\E\left[f(\bm W)e^{-\tfrac{\bm R^2}{2(1-\alpha)\sigma^2}}\right],
	\end{split}
	\end{equation*}
	where we used that $\frac{n'}{n}\to\alpha$, $\frac{\bar{\bm{S}}_{n'}}{\sqrt{n}}\xrightarrow{d}\bm W$, $\frac{{\bm{X}}_{n'}}{\sqrt n}\xrightarrow{d}\bm R$. Note that $$(1-\alpha)^{-1/2}\cdot\E\left[f(\bm W)e^{-\tfrac{\bm R^2}{2(1-\alpha)\sigma^2}}\right]=\E\left[f(\bm W)\right],$$ 
this easily follows taking $f=1$ (or from standard computations with Gaussian variables). The last two equations prove that 
	\begin{equation}\label{eq:reachedoneobj}
	\left(\frac{\bar{\bm{S}}_{n'}}{\sqrt{n}}\bigg|{\bm{X}}_{n+1}=0\right)\xrightarrow{d}\bm W=\bm{\mathcal{N}}\left(0,\alpha\cdot \gamma_{{\mathcal A}}^2\right).
	\end{equation}	
	
	We now look at
	\begin{equation*}
		\bm{S}_{n}-\bar{\bm{S}}_{n'}
		=\sum_{j=n'+1}^{n}\left(g({\bm{Y}}^{\bm c}_j,\dots, {\bm{Y}}^{\bm c}_{j+h-1})-\mu_{{\mathcal A}}\right)+\frac{\rho}{\sigma^2}{\bm{X}}_{n'}.
	\end{equation*}
	Recalling that $\hat{\bm{X}}$ denotes the time-reversed walk of ${\bm{X}}$ starting at 0 at time 1 (and $\hat{\bm Y}^{\bm c}_{j}$ the corresponding reversed steps), we have from the above equation that
	\begin{multline}\label{eq:bfirybfuiwbf0ow}
	\left(\bm{S}_{n}-\bar{\bm{S}}_{n'}\Big|{\bm{X}}_{n+1}=0\right)\\
	\stackrel{d}{=}\left(\sum_{j=1}^{n-n'}\left({g}(\hat{\bm{Y}}^{\bm c}_j,\dots, \hat{\bm Y}^{\bm c}_{j+h-1})-\mu_{{\mathcal A}}\right)+\frac{\rho}{\sigma^2}\hat{\bm{X}}_{n-n'+1}\Bigg|\hat{\bm{X}}_{n+1}=-1\right),
	\end{multline}
	where
	\begin{equation*}
	{g}(\hat{\bm{Y}}^{\bm c}_j,\dots, \hat{\bm Y}^{\bm c}_{j+h-1})\coloneqq\mathds{1}_{\left\{(\hat{\bm{Y}}^{\bm c}_{j+i})_{i\in[0,h-1]}\in{\hat{\mathcal A}}\right\}}.
	\end{equation*}	
	
	Note that, using \cref{eq:bfirybfuiwbf0ow} and similar arguments to the ones used for proving \cref{eq:reachedoneobj}, we obtain that
	\begin{equation}\label{eq:fiugwebhdndpwemd}
		\left(\frac{\bm{S}_{n}-\bar{\bm{S}}_{n'}}{\sqrt{n}}\bigg|{\bm{X}}_{n+1}=0\right)\xrightarrow{d}\bm{\mathcal{N}}\left(0,(1-\alpha)\gamma_{\hat{\mathcal A}}^2\right),
	\end{equation}
	for some constant $\gamma_{\hat{\mathcal A}}^2$.
	
	Letting $\alpha\to 1$, we obtain that
	\begin{align*}
	\bm{\mathcal{N}}\left(0,\alpha\cdot \gamma_{{\mathcal A}}^2\right)\xrightarrow{d}\bm{\mathcal{N}}\left(0, \gamma_{{\mathcal A}}^2\right),\\
	\bm{\mathcal{N}}\left(0,(1-\alpha)\gamma_{\hat{\mathcal A}}^2\right)\xrightarrow{P}0.
	\end{align*}
	Therefore from \cref{eq:reachedoneobj,eq:fiugwebhdndpwemd} and using \cite[Theorem 3.1]{MR1700749} we obtain that
	\begin{equation*}
		\left(\frac{\bm{S}_{n}}{\sqrt{n}}\bigg|{\bm{X}}_{n+1}=0\right)\xrightarrow{d}\bm{\mathcal{N}}\left(0,\gamma_{{\mathcal A}}^2\right),
	\end{equation*}
	concluding the proof.
\end{proof}

\subsection{Proof of the central limit theorem for permutations }\label{sect:main_thm_proof}

We can now prove our main result.

\begin{proof} [Proof of Theorem \ref{thm:main_thm_CLT}]
	We fix $h\in\Z_{\geq 1}$ and a pattern $\pi\in\mathcal{S}^{h}.$ Let $\bm{\sigma}_n$ be a uniform permutation of size $n$ in $\mathcal C$. From \cref{ass2} (w.l.o.g.\ we can assume $\beta=0$) and \cref{ass3} we have that, conditioning on $\left\{({\bm{X}}_{i})>c(h)\text{ for all } i\in[j+1,j+h]\right\}$,
	\begin{equation*}
	\pat_{[j+1,j+h]}(\bm{\sigma}_n)=\Pat\big(({\bm{Y}}_i^{\bm c})_{i\in[j,j+h-1]}\big|({\bm{X}}_i)_{i\in[2,n]}\geq 0,{\bm{X}}_{n+1}=0\big).
	\end{equation*}
	Therefore, for any sequence of integers $(a_n)_{n\in\Z_{>0}}$, conditioning on 
	$$\left\{(\bm{X}_{i})>c(h) \text{ for all } i\in[a_n,n-a_n]\right\},$$ 
	we can rewrite $\coc(\pi,\bm{\sigma}_n)$ as
	\begin{equation}\label{eq:rewriting_coc}
	\left(\sum_{j\in[1,n-h+1]}\mathds{1}_{\left\{({\bm{Y}}_i^{\bm c})_{i\in[j,j+h-1]}\in\Pat^{-1}(\pi)\right\}}\Bigg|({\bm{X}}_i)_{i\in[2,n]}\geq 0,{\bm{X}}_{n+1}=0\right)+O(a_n).
	\end{equation}
	
	Now, using \cref{lem:lemma3}, we have that
	\begin{equation*}
	\left(\frac{\sum_{j\in[1,n-h+1]}\mathds{1}_{\left\{({\bm{Y}}^{\bm c}_i)_{i\in[j,j+h-1]}\in\Pat^{-1}(\pi)\right\}}-n\cdot \mu_{\pi}}{\sqrt n}\Bigg|({\bm{X}}_i)_{i\in[2,n]}\geq 0,{\bm{X}}_{n+1}=0\right)\stackrel{d}{\longrightarrow}\bm{\mathcal{N}}(0,\gamma_{\pi}^2),
	\end{equation*}
	where the expressions of $\mu_{\pi}$ and $\gamma_{\pi}^2$ given in the statement of \cref{thm:main_thm_CLT} follow from the statement of \cref{lem:lemma3}.
	 Since thanks to Proposition \ref{prop:prop_labels_big}, we can choose $a_n=o(\sqrt n)$ and such that
	\begin{equation*}
	\P\left({\bm{X}}_{i}>c\text{ for all } i\in[a_n,n-a_n]\Big|({\bm{X}}_{j})_{j\in[2,n]}\geq 0,{\bm{X}}_{n+1}=0\right)\to 1,
	\end{equation*}
	then we can conclude that
	\begin{equation*}
	\frac{\coc(\pi,\bm{\sigma}_n)-n\cdot \mu_{\pi}}{\sqrt n}\stackrel{d}{\longrightarrow}\bm{\mathcal{N}}(0,\gamma_{\pi}^2).\qedhere
	\end{equation*}
\end{proof}

\begin{proof} [Proof of \cref{corl:main_thm}]
	From \cref{detstrongbsconditions} the quenched statement is equivalent to the existence of non-negative real numbers $(\Lambda_\pi)_{\pi\in\mathcal{S}}$ such that
	$$\frac{{\coc}(\pi,\bm{\sigma}_n)}{n}\stackrel{P}{\longrightarrow}\Lambda_{\pi},\quad\text{for all}\quad \pi\in\mathcal{S}.$$ 
	This immediately follows from Theorem \ref{thm:main_thm_CLT} with $\Lambda_\pi=\mu_{\pi}$. The annealed statement is a consequence of the quenched one (see \cref{thm:local_conv_perm_charact}).
\end{proof}

\subsection{Construction of the limiting random total order}
\label{sec:construction}

We now exhibit an explicit construction of the limiting object $\bm{\sigma}^{\infty}_{\mathcal{C}}$ (appearing in the statement of \cref{corl:main_thm}) as a random total order $\bm{\preccurlyeq}_{\mathcal{C}}$ of $\mathbb{Z}.$ We recall that a presentation of the local topology for permutations is given in \cref{sect:appe_local_topo}.

Consider a bi-infinite sequence of i.i.d.\ random variables $({\bm{Y}}^{\bm c_i}_i)_{i\in\Z}$ distributed as the random variable ${\bm{Y}}^{\bm c}$ defined above Assumption \ref{ass2}. We set, for all $h\in\Z_{>0},$
$$\bm{\nu}_{2h+1}\coloneqq\Pat\big(({\bm{Y}}^{\bm c_i}_i)_{i\in[-h,h]}\big),$$
where $\Pat$ is defined in \cref{ass3}.
Noting that the sequence of rooted permutations $(\bm{\nu}_{2h+1},h+1)_{h\in\Z_{>0}}$ is a.s.\ consistent and applying \cite[Proposition 2.12]{borga2018local} we have that the sequence $(\bm{\nu}_{2h+1},h+1)_{h\in\Z_{>0}}$ determines a unique random total order $(\Z,\bm{\preccurlyeq}_{\mathcal{C}})$ such that, for all $h\in\Z_{>0},$ $$r_h(\Z,\bm{\preccurlyeq}_{\mathcal{C}})=(\bm{\nu}_{2h+1},h+1),\quad\text{a.s.}$$
where $r_h$ is the restriction function around the root defined in \cref{rhfunct}.                                                                                                                                                                                                                                                                                                                                                                                                                                                                                                                                                                                                                                                                                                                                                                                                                                                                                                                                                                                                                                                                                                                                                                                                                                                                                                           
\begin{prop}
	\label{explicit_constuction_limit}
	Let $(\Z,\bm{\preccurlyeq}_{\mathcal{C}})$ be the random total order defined above and $\bm{\sigma}^\infty_{\mathcal{C}}$ be the limiting rooted permutation defined in \cref{corl:main_thm}. Then
	$$(\Z,\bm{\preccurlyeq}_{\mathcal{C}})\stackrel{d}{=}\bm{\sigma}^\infty_{\mathcal{C}}.$$
\end{prop}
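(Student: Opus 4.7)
The plan is to identify the two random infinite rooted permutations by matching their finite restrictions $r_h$ for every $h \in \Z_{>0}$. By the characterization of the local topology on permutations (see \cite[Proposition 2.12]{borga2018local}, already invoked in the construction of $(\Z,\bm{\preccurlyeq}_{\mathcal{C}})$), the law of a random infinite rooted permutation is determined by the laws of its restrictions $r_h$ for all $h$. So it suffices to show that $r_h(\Z, \bm{\preccurlyeq}_{\mathcal{C}}) \stackrel{d}{=} r_h(\bm{\sigma}^\infty_{\mathcal{C}})$ for each $h$.

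For the right-hand side, I would compute the distribution directly from the construction. By definition $r_h(\Z, \bm{\preccurlyeq}_{\mathcal{C}}) = (\bm{\nu}_{2h+1}, h+1)$ almost surely, and since $({\bm{Y}}^{\bm c_i}_i)_{i \in \Z}$ is an i.i.d.\ sequence, stationarity gives
$$\P(\bm{\nu}_{2h+1} = \nu) = \P\!\left(\Pat\!\left(({\bm{Y}}^{\bm c_i}_i)_{i \in [-h, h]}\right) = \nu\right) = \P\!\left(\Pat\!\left(({\bm{Y}}^{\bm c_i}_i)_{i \in [1, 2h+1]}\right) = \nu\right) = \mu_\nu,$$
where $\mu_\nu$ is the constant from \cref{thm:main_thm_CLT} applied to the pattern $\nu$ of size $2h+1$.

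For the left-hand side, I would use the annealed convergence $(\bm{\sigma}_n, \bm{i}_n) \xrightarrow{d} \bm{\sigma}^\infty_{\mathcal{C}}$ furnished by \cref{corl:main_thm}, which implies $r_h(\bm{\sigma}_n, \bm{i}_n) \xrightarrow{d} r_h(\bm{\sigma}^\infty_{\mathcal{C}})$. Since $\bm{i}_n$ is uniform on $[n]$, a direct count over the positions at which the centered radius-$h$ pattern equals $\nu$ yields
$$\P\!\left(r_h(\bm{\sigma}_n, \bm{i}_n) = (\nu, h+1)\right) = \frac{\E[\coc(\nu, \bm{\sigma}_n)]}{n} + O\!\left(\tfrac{h}{n}\right),$$
the $O(h/n)$ error accounting for the boundary positions at which the radius-$h$ pattern is not defined. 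The CLT in \cref{thm:main_thm_CLT} forces $\coc(\nu, \bm{\sigma}_n)/n \xrightarrow{P} \mu_\nu$; since the ratio lies in $[0,1]$, bounded convergence yields the same limit in expectation. Hence $\P(r_h(\bm{\sigma}^\infty_{\mathcal{C}}) = (\nu, h+1)) = \mu_\nu$, matching the value computed on the right-hand side for every rooted permutation of size $2h+1$.

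The whole argument therefore reduces to elementary bookkeeping of consecutive occurrences combined with the law of large numbers implicit in \cref{thm:main_thm_CLT}, so I do not foresee a serious obstacle. The only delicate points are, first, that the bi-infinite nature of the jump sequence $({\bm{Y}}^{\bm c_i}_i)_{i \in \Z}$ is precisely what guarantees that $r_h(\Z,\bm{\preccurlyeq}_{\mathcal{C}})$ has size $2h+1$ with root at position $h+1$ for every $h$, matching the structure of an infinite rooted permutation; and second, that the annealed convergence is being used here only to read off marginal probabilities, while the stronger quenched statement of \cref{corl:main_thm} could be invoked equivalently, since the limiting law is deterministic.
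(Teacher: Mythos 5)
Your proposal is correct and follows essentially the same route as the paper: both arguments identify the law by matching the distributions of the restrictions $r_h$ for every $h$, computing $\P\big(r_h(\Z,\bm{\preccurlyeq}_{\mathcal{C}})=(\nu,h+1)\big)=\mu_\nu$ from the i.i.d.\ (hence stationary) sequence $({\bm{Y}}^{\bm c_i}_i)_{i\in\Z}$ and $\P\big(r_h(\bm{\sigma}^\infty_{\mathcal{C}})=(\nu,h+1)\big)=\mu_\nu$ from the convergence in \cref{corl:main_thm}, and then concluding that equal $r_h$-marginals force equal laws (the paper does this last step via the separating class of balls from \cite[Observation 2.23]{borga2018local} rather than your citation of \cite[Proposition 2.12]{borga2018local}, and it leaves implicit the counting/bounded-convergence step that you spell out, but these are only presentational differences).
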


\begin{proof}
	From \cite[Observation 2.23]{borga2018local}, the set of balls 
	\begin{equation*}
	\mathcal{A}=\Big\{B\big((A,\preccurlyeq),2^{-h}\big):h\in\Z_{>0},(A,\preccurlyeq)\in\Sri\Big\}
	\end{equation*}
	is a separating class for the space of rooted permutations $(\Sri,d),$ i.e.\ if
	two measures agree on $\mathcal{A}$ then they are the same. Given a ball $B\left((A,\preccurlyeq),2^{-h}\right)\in\mathcal{A}$ and a rooted permutation $(A',\preccurlyeq')\in\Sri$ then 
	\begin{equation}\label{eq:wigfiuwgfwe}
	\P\left((A',\preccurlyeq')\in B\left((A,\preccurlyeq),2^{-h}\right)\right)=\P\left(r_h(A',\preccurlyeq')= r_h(A,\preccurlyeq)\right)=\P\left(r_h(A',\preccurlyeq')=(\pi,h+1)\right),
	\end{equation}
	where $\pi\in\mathcal{S}^{2h+1}$ is the unique permutation such that $r_h(A,\preccurlyeq)=(\pi,h+1)$ (it exists thanks to the discussion above \cref{defn:inf_rooted}).

	 Note that for all $h\in\Z_{>0},$ for all $\pi\in\mathcal{S}^{2h+1},$
	\begin{equation*}
	\P\big(r_h(\Z,\bm{\preccurlyeq}_{\mathcal{C}})=(\pi,h+1)\big)=\P\big(\Pat({\bm{Y}}^{\bm c}_{-h},\dots,{\bm{Y}}^{\bm c}_{h})=\pi\big),
	\end{equation*}	 
	and from \cref{corl:main_thm},
	\begin{equation*}
	\P\big(r_h(\bm{\sigma}^\infty_\mathcal{C})=(\pi,h+1)\big)=\P\big(\Pat({\bm{Y}}^{\bm c}_1,\dots,{\bm{Y}}^{\bm c}_{2h+1})=\pi\big).
	\end{equation*}	 
	 Since the random variables $({\bm{Y}}^{\bm c}_i)_{i\in\Z}$ are i.i.d., the two equations above together with \cref{eq:wigfiuwgfwe} imply that $(\Z,\bm{\preccurlyeq}_{\mathcal{C}})$ and $\bm{\sigma}^\infty_{\mathcal{C}}$ agree on $\mathcal{A}$ and so they have the same distribution.
\end{proof}

\section{Some families of permutations that satisfy the three assumptions}\label{sect:examples_gen_tree_Ok}

In this section we show that several known families of permutations encoded by generating trees verify the three Assumptions \ref{ass1}, \ref{ass2} and \ref{ass3} of \cref{thm:main_thm_CLT} and \cref{corl:main_thm}. Moreover, for the family of $\{1423,4123\}$-avoiding permutations we exhibit an explicit description of the function $\Pat$ appearing in Assumption \ref{ass3}.

\subsection{\{1423,4123\}-avoiding permutations} 
\label{sec:1423,4123}

We already saw in Examples \ref{exmp:1423_4123_av_part0}, \ref{exmp:1423_4123_av} and \ref{exmp:1423_4123_av_part2}
several properties of \{1423,4123\}-avoiding permutations and their generating trees.
In particular, we saw that all the labels of the generating tree are greater or equal than $3$ and the label of the root is 2. Therefore \cref{ass1} holds. Solving  (with standard computations) the system in \cref{eq:frhweibdowenpen} and using \cref{prop:ehibvreibcpiwrnfc}, we obtain the following.

\begin{prop}\label{prop:sampling_permu}
	 Assumption \ref{ass2} holds with $\alpha_y=(2-\sqrt{2})^{-y}(3-2\sqrt{2})$ for all $y\in\Z_{\leq0}$, $\alpha_{1}=2(2-\sqrt 2)^{-1}(3-2\sqrt 2)=(2-\sqrt{2})$ and, $c_y=1$ for all $y\in\Z_{\leq0}$ and $c_1=2$.
\end{prop}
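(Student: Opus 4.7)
The plan is to apply \cref{prop:ehibvreibcpiwrnfc}, which reduces the verification of \cref{ass2} to two tasks: checking the structural hypothesis \eqref{eq:ryuhfweubfweodq} on the colored succession rule, and solving the system \eqref{eq:frhweibdowenpen} for parameters $p,q>0$. Note that \cref{ass1} has already been observed to hold with $\beta=3$ (labels are elements of $\Z_{\geq 3}$ and the root has label $\lambda=2$).

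First, from the colored succession rule \eqref{eq:biwfbwrubfwifnwepi} I can read off the consistent jumps: the label $(k)$ produces one copy of each label in $\{(3),\dots,(k)\}$ together with two copies of $(k+1)$ (one blue, one tangerine), so the uncolored jump set is $\mathcal{Y}_1=\Z_{\leq 1}$ with multiplicities $m_y=1$ for $y\leq 0$ and $m_1=2$. For every $k\geq 2=\beta-1$, the children multiset of $(k)$ then agrees with $\{(k+y):y\in\mathcal{Y}_1,\,k+y\geq 3\}$ together with the corresponding multiplicities (noting that the lower truncation $k+y\geq 3$ exactly accounts for the fact that when $k$ is small, the labels $(3),\dots,(k)$ disappear), so \eqref{eq:ryuhfweubfweodq} is satisfied.

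Next, I would solve \eqref{eq:frhweibdowenpen}. For $q>1$ the geometric series $\sum_{y\leq 0}q^y=q/(q-1)$ converges, so the zero-mean equation becomes
\begin{equation*}
2q-\frac{q}{(q-1)^2}=0,
\end{equation*}
forcing $(q-1)^2=1/2$. The branch $q=1+1/\sqrt{2}$ is the only one compatible with $q>1$, and it gives $q^{-1}=2-\sqrt{2}$. Plugging into the normalization equation $p\bigl(2q+q/(q-1)\bigr)=1$ yields, after a short rationalization, $p=1/(3+2\sqrt{2})=3-2\sqrt{2}$. The finite-variance condition $\sum_{y\leq 1}y^2 q^y m_y<\infty$ is automatic from the geometric decay $q^{-1}<1$.

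Finally, \cref{prop:ehibvreibcpiwrnfc} directly yields \cref{ass2} with $\alpha_y=p\cdot q^y\cdot m_y$ and $c_y=m_y$, which matches the stated closed forms (for $\alpha_1$ one rewrites $2pq=2(2-\sqrt{2})^{-1}(3-2\sqrt{2})$ and simplifies to $2-\sqrt{2}$). I don't foresee any real obstacle; the only non-routine point is selecting the correct root $q>1$ rather than $q=1-1/\sqrt{2}<1$, for which $\sum_{y\leq 0}q^y$ would diverge and Janson's lemma would not apply.
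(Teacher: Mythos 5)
Your proposal is correct and follows essentially the same route as the paper, which likewise invokes \cref{prop:ehibvreibcpiwrnfc} after "solving (with standard computations) the system in \cref{eq:frhweibdowenpen}"; your explicit verification of \eqref{eq:ryuhfweubfweodq} with $m_y=1$ for $y\le 0$, $m_1=2$, and the computation $q^{-1}=2-\sqrt 2$, $p=3-2\sqrt 2$ match the intended argument. The choice of the root $q>1$ is indeed the only point requiring care, and you handle it correctly.
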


We now show that also \cref{ass3} is satisfied by this family.
\begin{prop} 
	\label{prop:comb_lemma}	
	Assumption \ref{ass3} is satisfied with $c(h)=h+1.$
\end{prop}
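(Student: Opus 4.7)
The plan is to describe $\Pat$ explicitly by tracking, for each step in the window $[m,m+h-1]$, the rank from the top of the active site used to produce the new final value, and then to verify that this rank sequence (together with the colors) determines the relative heights of the $h$ inserted values $v_m,\dots,v_{m+h-1}$. The case $h=1$ is trivial, so I assume $h\geq 2$ below.

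The first observation is that from the colored succession rule \cref{eq:biwfbwrubfwifnwepi}, each colored jump $y^c$ uniquely determines the rank from the top of the active site chosen: $y=1^T$ gives rank $1$ (the top), $y=1^B$ gives the bottom rank $k_{i-1}$ (height $1$), and $y\leq 0$ gives rank $2-y$, which is \emph{independent} of the parent label. The second observation is that the threshold $c(h)=h+1$ excludes the remaining ``boundary'' possibility: a window insertion at rank $k_{i-1}-1$ (height $2$) would force the jump $y_{i-1}=3-k_{i-1}$ and hence $k_i=3$, which contradicts $k_i\geq h+2\geq 4$. So each insertion in the window is of exactly one of three types: top ($y=1^T$), bottom ($y=1^B$), or \emph{strictly intermediate} (rank $r\in[2,k_{i-1}-2]$, height among $a_3,\dots,a_{k_{i-1}-1}$).

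The heart of the proof is to show, by induction on $h$, that the relative order of $v_m,\dots,v_{m+h-1}$ in $\sigma^{(m+h-1)}$ depends only on the ranks and colors of the $h$ insertions, and not on the specific starting permutation $\sigma^{(m-1)}$. The driving observation is that, after a top or strictly intermediate insertion at step $i$, the new value $v_i$ occupies precisely the second-from-top active site of $\sigma^{(i)}$; this is a direct consequence of the formulas for $\text{AS}(\sigma^{*(n+1)})$ and $\text{AS}(\sigma^{*i_j})$ recalled in \cref{exmp:1423_4123_av}. Combined with the monotone ordering $a_3<\cdots<a_{k_{i-1}-1}$ of the intermediate active sites, this invariant lets one compare any previously inserted $v_j$ to the next insertion $v_{i+1}$ purely in terms of the rank used at step $i+1$: top insertions place $v_{i+1}$ above every previously inserted window value, bottom insertions place it below (and shift the others uniformly up by $1$), and strictly intermediate insertions at rank $r$ place $v_{i+1}$ at a position in the current ordered list determined by $r$ and by the previously recorded ranks.

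Finally, any insertion performed at a step $>m+h-1$ only shifts the values $\sigma(m),\dots,\sigma(m+h-1)$ on a common upper set, so their mutual order in $\sigma_n$ coincides with the one determined at step $m+h-1$, giving an explicit description of $\Pat$ as a function of the jumps $(y_i^{c_{i+1}})_{i\in[m-1,m+h-2]}$ alone. The main technical obstacle I foresee is the case analysis inside the induction for the structural invariant, especially when the window mixes all three types of insertions: one must carefully propagate each type of shift through the remaining steps while verifying that the specific intermediate heights $a_3,\dots,a_{k_{i-1}-1}$ of the underlying permutation never enter the comparisons. The condition $k_i>h+1$ is precisely what provides enough ``buffer'' among the intermediate active sites for this propagation to be insensitive to the starting permutation.
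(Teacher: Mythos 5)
Your overall strategy---inducting along the window and arguing that the position of each new insertion relative to the previously inserted window values is determined by the jumps alone---is the same as the paper's, which formalizes the bookkeeping through the truncated lists $S^j_j$ in \cref{lem:ifwirwbfiurw}. However, there is a genuine gap at the crucial step, the strictly intermediate insertions, and the concrete use you make of the threshold $c(h)=h+1$ is too weak to close it. You invoke $k_i>h+1$ only to exclude an insertion at the second-from-bottom active site (new label $3$). But after two or more bottom ($+1^B$) insertions inside the window, the current permutation has several low-lying active sites sitting in \emph{different} gaps between window values: the site just above the lowest bottom-inserted dot (rank $2$ from the bottom), the site just above the next bottom-inserted dot (rank $3$), and so on, followed by an unknown number of surviving active sites of the starting permutation $\sigma^{(m-1)}$. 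For a very negative jump $y$, the rank $2-y$ from the top does not tell you whether the insertion lands among these low sites (and, if so, in which window-gap) or in the block of original sites: that depends on the current label $k_i$, which is not a function of the jumps. This is exactly the ambiguity the paper isolates in the discussion around \cref{eq:key_exemp}. Ruling out new label $3$ handles only the single lowest ambiguous site; for $h\geq 3$ and several bottom insertions in the window one must rule out new labels $3,4,\dots$ up to $h+1$, so the claim ``strictly intermediate insertions at rank $r$ are placed at a position determined by $r$ and the previously recorded ranks'' is precisely the unproved core of the proposition, not a consequence of what you established.

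The missing idea is to measure the insertion from the bottom rather than from the top: for an intermediate insertion the new label equals its rank from the bottom plus one, so the hypothesis $k_i>h+1$ forces every window insertion to land at rank at least $h+1$ from the bottom, whereas the ambiguous sites created during at most $h-1$ earlier window steps occupy ranks at most about $h$ from the bottom. This is the Claim inside the paper's proof of \cref{lem:ifwirwbfiurw}, where landing in a forgotten position would force $k_{m+j}\leq j+2\leq h+1$, contradicting the hypothesis. With that input your induction invariant becomes provable (your first observation that $y\leq 0$ corresponds to rank $2-y$ from the top independently of the parent label is correct, as are the top/bottom cases, the second-from-top invariant, and the remark that insertions after step $m+h-1$ do not alter the window pattern); without it, the induction step fails exactly in the mixed case you yourself flag as the main obstacle.
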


Note that Propositions \ref{prop:sampling_permu} and \ref{prop:comb_lemma} imply that \cref{thm:main_thm_CLT} and \cref{corl:main_thm} hold for \{1423,4123\}-avoiding permutations.

\medskip

In order to prove \cref{prop:comb_lemma} we have to introduce some more notation.
\begin{obs}
	\label{obs:corresp_jump_val}
	We make clearer the role of the jumps in the generating tree for $\{1423,4123\}$-avoiding permutations. Suppose that $\sigma$ has $k$ active sites, then the possible jumps from $\sigma$ to a child of $\sigma$ are $\textcolor{blue}{[+1^B},-(k-3),-(k-4),\dots,-1,0,\textcolor{orange}{+1^T}].$ The child corresponding to the the $i$-th jump is then obtained by appending a final value  in the $i$-th active site of $\sigma$ (ordered from bottom to top). The situation is summarized in \cref{corresp_jump_pos}.
	
	\begin{figure}[htbp]
	\begin{center}
		\includegraphics[scale=0.79]{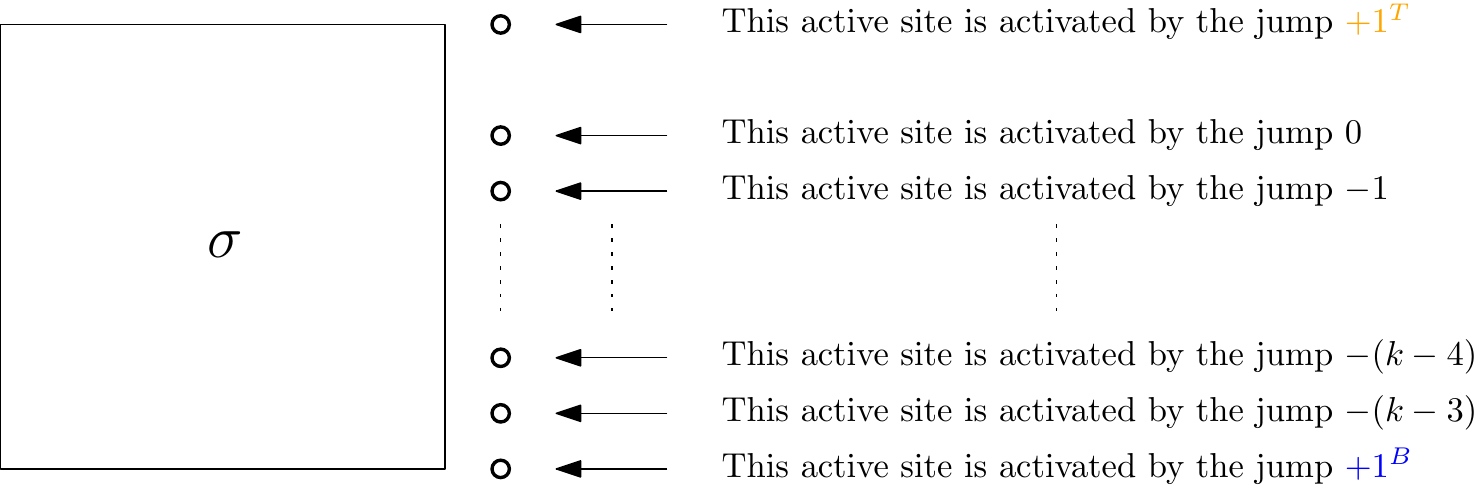}\\
		\caption{The correspondence between (colored) jumps and active sites of a permutation $\sigma\in\Av(1423,4123)$ with $k$ active sites. \label{corresp_jump_pos}}
	\end{center}
	\end{figure}
\end{obs}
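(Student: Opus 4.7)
The plan is to derive the statement directly from the information already gathered in \cref{exmp:1423_4123_av}, where the active sites of every child of $\sigma$ are explicitly listed. Since jumps in the generating tree are defined as differences of successive label values and the label is the statistic $|\text{AS}(\cdot)|$, the whole claim reduces to reading off $|\text{AS}(\sigma^{*i_j})|-|\text{AS}(\sigma)|$ for every active site $i_j$ of $\sigma$, and then checking that the coloring convention introduced in \cref{exmp:1423_4123_av_part2} matches.

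Concretely, I would fix $\sigma\in\Av^n(1423,4123)$ with active sites $\text{AS}(\sigma)=\{i_1=1,i_2=2,i_3,\dots,i_{k-1},i_k=n+1\}$ so that $|\text{AS}(\sigma)|=k$, and then enumerate the children of $\sigma$ in the order imposed by the sites from bottom ($i_1$) to top ($i_k$). From \cref{exmp:1423_4123_av}, I read that $|\text{AS}(\sigma^{*i_1})|=|\text{AS}(\sigma^{*1})|=k+1$ and $|\text{AS}(\sigma^{*i_k})|=|\text{AS}(\sigma^{*(n+1)})|=k+1$, so the extremal jumps equal $+1$; for $2\le j\le k-1$ the formula $|\text{AS}(\sigma^{*i_j})|=j+1$ yields a jump of $(j+1)-k=-(k-j-1)$, which ranges through $-(k-3),-(k-4),\dots,-1,0$ as $j$ goes from $2$ to $k-1$. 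Concatenating these values in the order in which the active sites appear from bottom to top produces precisely the list $[+1,-(k-3),-(k-4),\dots,-1,0,+1]$ announced in the statement.

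The remaining point is the coloring: the two occurrences of $+1$ correspond to the repeated label $(k+1)$ that gets painted blue (for the bottom site $i_1$) and orange (for the top site $i_k$) in the colored succession rule \eqref{eq:biwfbwrubfwifnwepi}. This is just a bookkeeping check, since the convention chosen in \cref{exmp:1423_4123_av_part2} is exactly to paint the first $(k+1)$ produced (associated to $i_1$) blue and the last $(k+1)$ produced (associated to $i_k$) orange, matching $+1^B$ and $+1^T$ in the claimed list. The bijective statement ``the $i$-th jump corresponds to appending in the $i$-th active site'' is then nothing more than the ordering convention fixed at the end of \cref{exmp:1423_4123_av}, under which children are drawn from left to right in increasing order of the position of the appended final value.

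There is no real obstacle here: the observation is essentially a reformulation of Kremer's description of active sites. The only mild care needed is to keep the coloring in sync with the succession rule \eqref{eq:biwfbwrubfwifnwepi} and with the left-to-right drawing convention adopted in \cref{gen_tree_1423_4123}; once those are in place, the enumeration of jumps is a one-line computation.
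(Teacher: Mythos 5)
Your proposal is correct and follows exactly the reasoning the paper relies on: the observation is stated without a separate proof precisely because it is the computation $|\text{AS}(\sigma^{*i_j})|-|\text{AS}(\sigma)|$ read off from \cref{exmp:1423_4123_av}, combined with the ordering and coloring conventions of \cref{exmp:1423_4123_av_part2}. Your arithmetic for the jumps and the matching of $+1^B$/$+1^T$ to the bottom/top active sites are both accurate.
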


We now introduce a construction useful for the proof of Proposition \ref{prop:comb_lemma}. It might be useful to compare what follows with the consecutive Example \ref{exmp:list_act_ssites}. Given a permutation $\sigma\in\Av^n(1423,4123)$ we will define, for all $h\geq 1,$ a list $S_h(\sigma)$ that records the position of the active sites in $\sigma$ with respect to the position of the last $h$ appended final values. Moreover, this list $S_h(\sigma)$ will record the correspondence between (colored) jumps in the generating tree and the positions of the new final values.

We call $d_1,d_2,\dots,d_h$ the $h$ right-most dots in the diagram of $\sigma$ from bottom to top. 
We also assume that the active sites of $\sigma$ are positioned from top to bottom as follows:
\begin{itemize}
	\item There is one active site at the top of $\sigma$ (this is always the case for $\{1423,412\}$-avoiding permutations) corresponding to the jump label $y^{c}=\textcolor{orange}{+1^T}$.
	\item There are $q_h\geq0$ active sites between the top active site and $d_h$. These active sites corresponds to jump labels $y\in[-q_h+1,0].$
	\item For all $ h-1\geq j \geq 1,$ there are $q_j\geq0$ active sites between $d_{j+1}$ and $d_j$. These active sites corresponds to jump labels $y\in[-(\sum_{i=j}^hq_i)+1,-(\sum_{i=j+1}^hq_i)]$.
	\item There are $q_0\geq0$ active sites between $d_1$ and the bottom active site. These active sites corresponds to jump labels $y\in[-(\sum_{i=0}^hq_i)+1,-(\sum_{i=1}^hq_i)]$.
	\item There is an active site at the bottom of $\sigma$ (this is always the case for $\{1423,412\}$-avoiding permutations) corresponding to the jump label $y^{c}=\textcolor{blue}{+1^B}.$	
\end{itemize}

Denoting with $s_j\coloneqq-(\sum_{i=j}^hq_i)$, and assuming that the last $h$ dots $d_1,d_2,\dots,d_h$ form the pattern $\pi$, i.e.\ $\pat_{[n-(h-1),n]}(\sigma)=\pi,$  we set $S_h(\sigma)$ to be the list
\begin{equation*}
\left[[s_0+1,s_1],\mathcircled{\pi^{-1}_1},\dots,\mathcircled{\pi^{-1}_j},[s_j+1,s_{j+1}], \mathcircled{\pi^{-1}_{j+1}},\dots,\mathcircled{\pi^{-1}_{h-1}} ,[s_{h-1}+1,s_h],\mathcircled{\pi^{-1}_h}, [-s_h+1,0]\right],
\end{equation*}
where $\mathcircled{\pi^{-1}_{i}}$ simply denotes the column index of the dot $d_i$ (when the last $h$ columns are numbered from 1 to $h$). We also remark that we are using the following convention: every interval of the form $[a,b]$, with $a>b,$ has to be understood as the empty interval.

The list $S_h(\sigma)$ is therefore composed by
\begin{itemize}
	\item(possibly empty) intervals recording the correspondence between active sites and jumps, whose elements will be called \emph{positions};
	\item circled positive integers recording the relative position of the last $h$ values of $\sigma$ which will be called \emph{indices}.
\end{itemize}
Moreover, the alternation of intervals and circled positive integers records the relative positions between active sites of $\sigma$ and the last $h$ values of $\sigma.$ Finally, note also that the list $S_h(\sigma)$ does not record the position of the bottom and top active sites since for each permutation $\sigma$ they would always be at the beginning and at the end of the list.
 
\begin{exmp}
	\label{exmp:list_act_ssites}
	We consider a permutation $\sigma$ such that the position of the last $h=5$ values induce the pattern $\pi=35214$ ($\pi^{-1}=43152$) with $k=8$ active sites distributed as follows: 
	\begin{equation*}
	\sigma=
	\begin{array}{lcr}
	\includegraphics[scale=0.8]{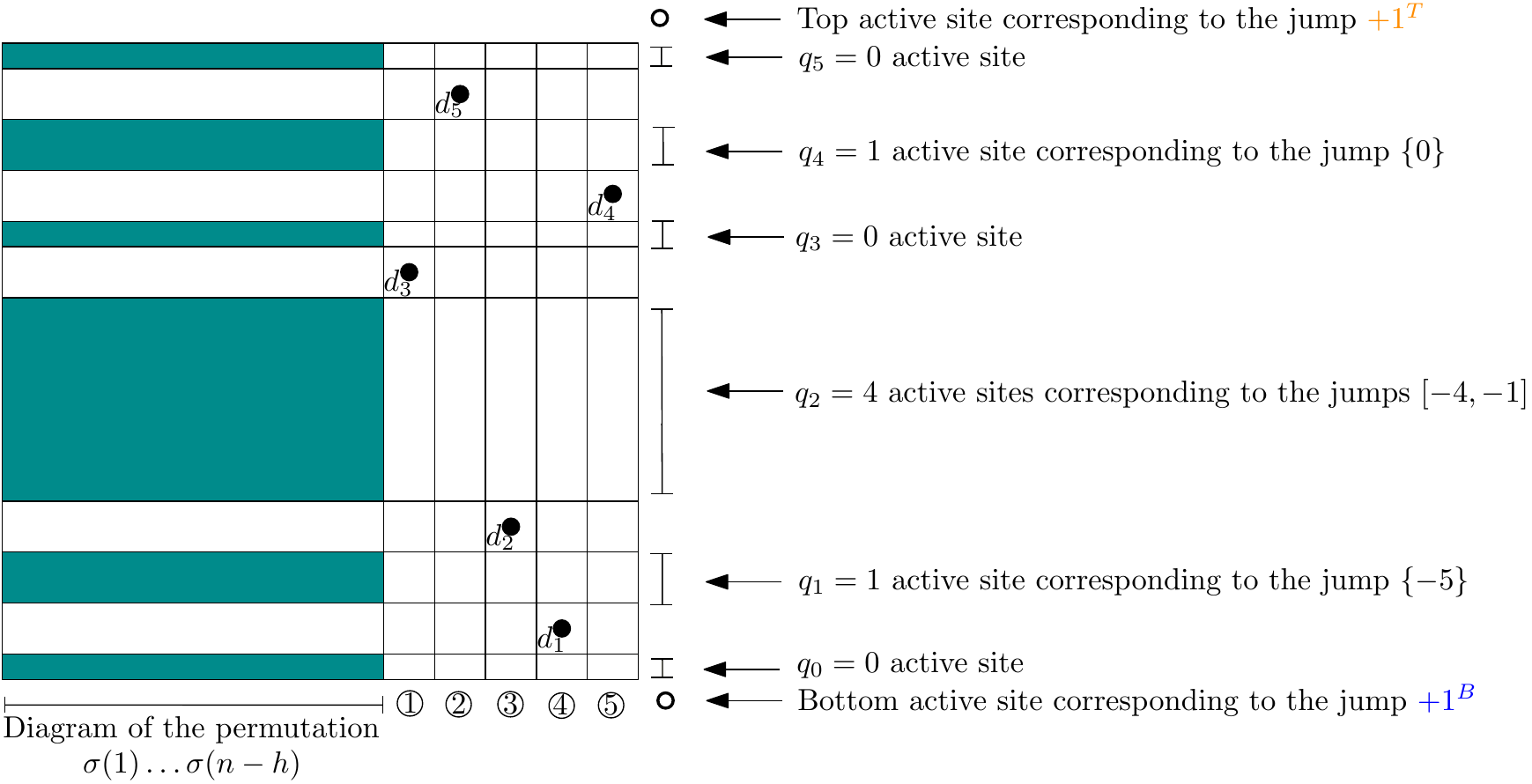}\\
	\end{array}
	\end{equation*}
	Then reading from bottom to top the positions of the active sites and the column index of the point $d_i,$ we have
	\begin{align*}
	S_5(\sigma)&=\big[\emptyset,\mathcircled{4},\{-5\},\mathcircled{3},[-4,-1],\mathcircled{1},\emptyset,\mathcircled{5},\{0\},\mathcircled{2},\emptyset\big]\\
	&=\big[\mathcircled{4},\{-5\},\mathcircled{3},[-4,-1],\mathcircled{1},\mathcircled{5},\{0\},\mathcircled{2}\big].
	\end{align*}
\end{exmp}

\bigskip

We finally introduce a small modification of the list $S_h(\sigma).$ The benefit of this modification will be clearer in Example \ref{exemp:messup} (see in particular the discussion after \cref{eq:key_exemp}). Given an integer $\ell\geq 1$ we denote by $S^\ell_h(\sigma)$ the list obtained from $S_h(\sigma)$ forgetting all the $\ell-1$ left-most positions (corresponding to the $\ell-1$ most negative jumps) and replacing the interval involving the $\ell$-th left-most position, say $[-p,-q],$ with the interval $(-\infty,-q].$ 

\begin{exmp}\label{exemp:dhjswvfciewbfceqlo}
	Using the list $S_5(\sigma)=\big[\mathcircled{4},\{-5\},\mathcircled{3},[-4,-1],\mathcircled{1},\mathcircled{5},\{0\},\mathcircled{2}\big]$ from Example \ref{exmp:list_act_ssites} we have that
	\begin{equation*}
	\begin{split}
	S^1_5(\sigma)=&\big[\mathcircled{4},(-\infty,-5],\mathcircled{3},[-4,-1],\mathcircled{1},\mathcircled{5},\{0\},\mathcircled{2}\big],\\
	S^2_5(\sigma)=&S^3_5(\sigma)=S^4_5(\sigma)=S^5_5(\sigma)=\big[\mathcircled{4},\mathcircled{3},(-\infty,-1],\mathcircled{1},\mathcircled{5},\{0\},\mathcircled{2}\big],\\
	S^6_5(\sigma)=&\big[\mathcircled{4},\mathcircled{3},\mathcircled{1},\mathcircled{5},(-\infty,0],\mathcircled{2}\big].\\
	\end{split}
	\end{equation*}
\end{exmp}

\begin{obs}
	\label{obs:key_for_proof}
	For every $\ell,h\geq 1$, $\sigma\in\Av^n(1423,4123),$ the list $S^\ell_h(\sigma)$ determines the final consecutive pattern $\pat_{[n-(h-1),n]}(\sigma).$ Indeed, it is equal to the inverse of the permutation obtained reading the indices in $S^\ell_h(\sigma)$ from left to right. We denote this pattern with $\Pat\left(S^\ell_h(\sigma)\right).$
\end{obs}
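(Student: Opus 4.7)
The plan is to read the claim directly off the construction of $S_h(\sigma)$, and then verify that the modifications passing from $S_h(\sigma)$ to $S^\ell_h(\sigma)$ never touch the circled entries.

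First I would fix $\pi := \pat_{[n-(h-1),n]}(\sigma)$ and unwind the definitions of $d_1,\dots,d_h$ and of the circled indices. By definition $d_1,\dots,d_h$ are the $h$ right-most dots of $\sigma$ listed in increasing order of height. If we relabel the last $h$ columns of $\sigma$ as $1,\dots,h$ from left to right, then inside these columns the dot of $i$-th smallest height sits in column $\pi^{-1}(i)$: indeed this is exactly what the inverse of $\pi$ encodes. But the relative column index of $d_i$ is precisely the content of the circled entry $\mathcircled{\pi^{-1}_i}$ placed at the location of $d_i$ in $S_h(\sigma)$. Since the bullet-point definition of $S_h(\sigma)$ records active sites and the dots $d_i$ strictly in order of height from bottom to top, reading the circled entries of $S_h(\sigma)$ from left to right produces the word $\pi^{-1}(1)\,\pi^{-1}(2)\cdots\pi^{-1}(h)$, that is, the one-line notation of $\pi^{-1}$. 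Taking the inverse of this permutation recovers $\pi = \pat_{[n-(h-1),n]}(\sigma)$, proving the claim for $S_h(\sigma)$.

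It then remains to observe that the passage from $S_h(\sigma)$ to $S^\ell_h(\sigma)$ only modifies the interval entries of the list. Concretely, we delete the $\ell-1$ left-most positions (possibly erasing a few now-empty intervals) and replace the interval containing the $\ell$-th left-most position, say $[-p,-q]$, by the half-line $(-\infty,-q]$. No circled index $\mathcircled{\pi^{-1}_i}$ is ever removed, replaced, or shifted by these operations, and the relative order in which the circled indices appear in the list is preserved. Consequently the left-to-right reading of the circled entries in $S^\ell_h(\sigma)$ coincides with the reading in $S_h(\sigma)$, so by the previous paragraph the inverse of the resulting permutation equals $\pat_{[n-(h-1),n]}(\sigma)$. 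This legitimises the notation $\Pat\bigl(S^\ell_h(\sigma)\bigr)$.

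I do not anticipate any real obstacle here: the statement is essentially an unpacking of the definition of $S_h(\sigma)$ and a verification that the truncation operation producing $S^\ell_h(\sigma)$ is confined to the non-circled part of the list. The only point to be careful about is to confirm, from the bullet-point description of $S_h(\sigma)$, that active sites and the marked dots $d_i$ are genuinely interleaved according to the total order on heights, so that reading the circled entries left-to-right is the same as reading them in increasing height order.
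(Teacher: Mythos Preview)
Your proof is correct. Note, however, that the paper does not actually supply a proof for this statement: it is recorded as an \emph{Observation} and is immediately followed only by an illustrative example (reading off $\pat_{[|\sigma|-4,|\sigma|]}(\sigma)=(43152)^{-1}=35214$ from $S^2_5(\sigma)$). The paper regards the claim as a direct unpacking of the definition of $S_h(\sigma)$, which is precisely what you have spelled out; your two-step argument (reading the circled entries of $S_h(\sigma)$ gives $\pi^{-1}$, and the truncation $S_h\to S^\ell_h$ leaves circled entries untouched) is the intended content, made explicit.
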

\begin{exmp}
		Using for instance the list $S^2_5(\sigma)=\big[\mathcircled{4},\mathcircled{3},[-\infty,-1],\mathcircled{1},\mathcircled{5},\{0\},\mathcircled{2}\big]$ from Example \ref{exemp:dhjswvfciewbfceqlo} then 
		$$\pat_{[|\sigma|-4,|\sigma|]}(\sigma)=(43152)^{-1}=35214.$$
\end{exmp}

We now investigate in a specific example the result stated in Proposition \ref{prop:comb_lemma}. Specifically, we show how the function $\Pat$ acts on our running example of $\Av(1423,4123)$. The example is rather long but it covers most of the cases of the proof of the consecutive Lemma \ref{lem:ifwirwbfiurw} that is quite technical.
\begin{exmp}
	\label{exemp:messup}
	Let $n\geq6$ and $(k_i^{c})_{i\in[n]}$ be a sequence of colored labels with $(y_i^{c})_{i\in[n-1]}$ the corresponding sequence of colored jumps. Let $h=6$ and $1\leq m\leq n-5.$ Assume that $k_i>h+1=7$ for all $i\in[m,m+5]$ and that
	$$(y_i^c)_{i\in{[m-1,m+4]}}=(-2,\textcolor{blue}{+1^B},\textcolor{blue}{+1^B},\textcolor{orange}{+1^T},\textcolor{orange}{+1^T},-7).$$
	We denote by $(p_i)_{i\in[6]}$ the six dots from left to right in the diagram of $\GG((k_i^c)_{i\in[n]})$ corresponding to the indices $[m,m+5]$ (note that the dots $d_i$ in Example \ref{exmp:list_act_ssites} were ordered from bottom to top and so we use a different notation in this example).
	
	We now reconstruct the consecutive pattern $\pat_{[m,m+5]}\big(\GG((k_i^c)_{i\in[n]})\big)$ reading the jumps in $(y_i^c)_{i\in{[m-1,m+4]}}$ from left to right as follows:
	\begin{itemize}
		\item The first jump is $y_{m-1}^c=-2$, therefore $\GG((k_i^c)_{i\in[m]})$ has one active site above $p_1$ and all the other active sites below $p_1$ (see Example \ref{exmp:1423_4123_av} for a reminder on the behavior of the active sites when a new dot is appended). We note that $p_2$ will be below $p_1$ if and only the next jump $y^c$ satisfies $y\leq 0$ or $y^c=\textcolor{blue}{+1^B},$ otherwise (if $y^c=\textcolor{orange}{+1^T}$) $p_2$ will be above $p_1$. Therefore
		\begin{equation*}
		\GG((k_i^c)_{i\in[m]})=
		\begin{array}{lcr}
		\includegraphics[scale=0.79]{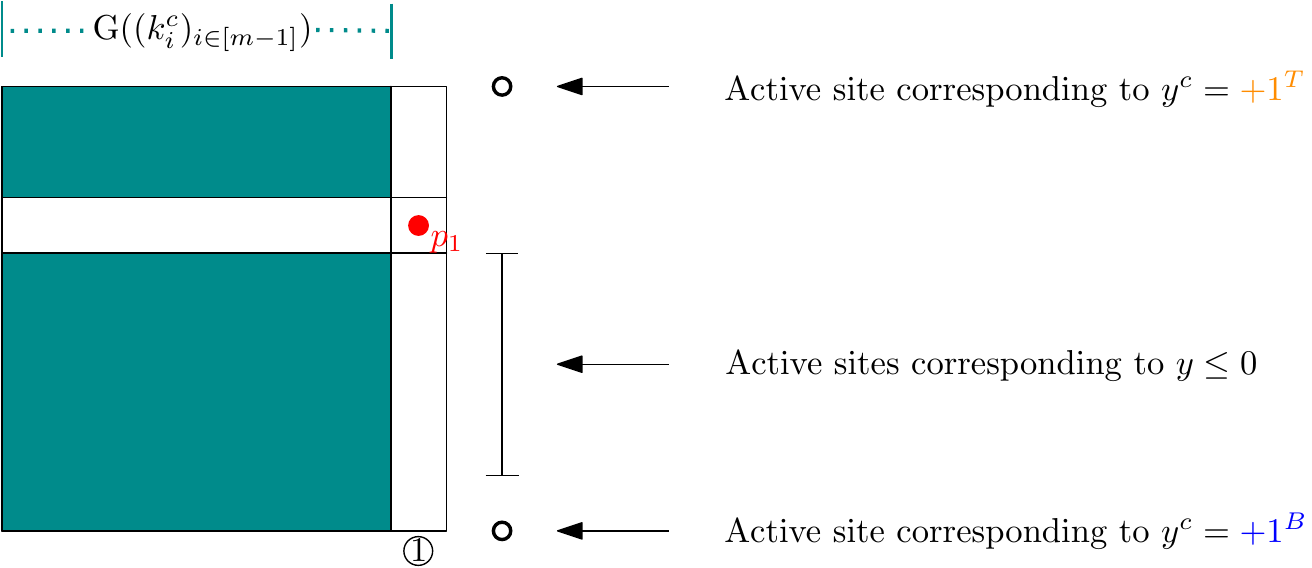}$$
		\end{array}
		\end{equation*}
		The situation is recorded by the list
		$$S_1^1\big(\GG((k_i^c)_{i\in[m]})\big)=\big[(-\infty,0],\mathcircled{1}\big].$$
		
		\item The second jump is $y_{m}^c=\textcolor{blue}{+1^B}$ and so we append to $\GG((k_i^c)_{i\in[m]})$ the minimal value. Consequently, $\GG((k_i^c)_{i\in[m+1]})$ has an active site at the top (corresponding to $y^c=\textcolor{orange}{+1^T}$), an active site at the bottom (corresponding to $y^c=\textcolor{blue}{+1^B}$) and all the other active sites between $p_2$ and $p_1$ (corresponding to $y\leq 0$).
		Therefore
		\begin{equation*}
		\quad\GG((k_i^c)_{i\in[m+1]})=
		\begin{array}{lcr}
		\includegraphics[scale=0.78]{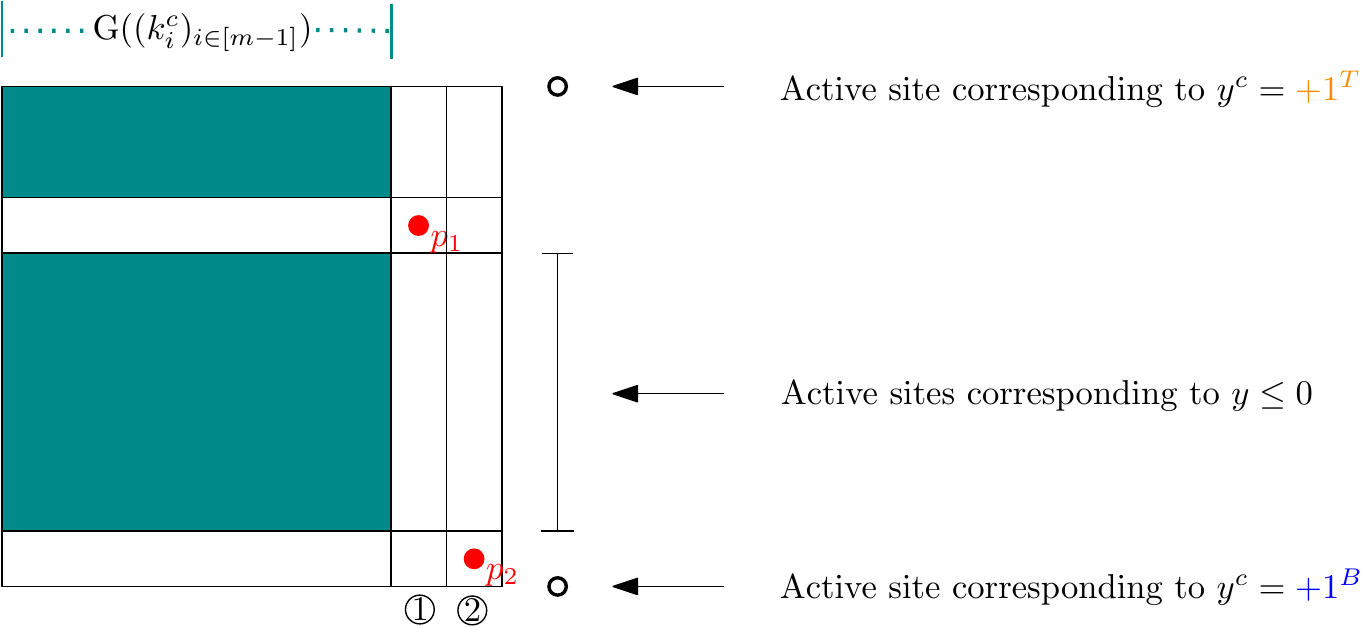}$$
		\end{array}
		\end{equation*}
		The situation is recorded by the list
		$$S_2^2\big(\GG((k_i^c)_{i\in[m+1]})\big)=\big[\mathcircled{2},(-\infty,0],\mathcircled{1}\big].$$
		
		\item The third jump is $y_{m+1}^c=\textcolor{blue}{+1^B}$ and so we append to $\GG((k_i^c)_{i\in[m+1]})$ the minimal value. As a consequence, $\GG((k_i^c)_{i\in[m+2]})$ has
		\begin{itemize}
			\item an active site at the top (corresponding to $y^c=\textcolor{orange}{+1^T}$);
			\item an active site at the bottom (corresponding to $y^c=\textcolor{blue}{+1^B}$);
			\item an active site between $p_3$ and $p_2$. Note that this active site will be not activated by the next $3$ jumps. Indeed, this active site (corresponding to the label $3$) could be activated by the next 3 jumps only if at least one of the next 3 labels has value smaller than $3+2=5.$ This cannot happen since, by assumption, $k_i>7$ for all $i\in[m,m+5]$;
			\item all the other active sites between $p_2$ and $p_1$ (corresponding to $y\leq 0$).
		\end{itemize}
		Therefore
		\begin{equation*}
		\quad\quad\quad\GG((k_i^c)_{i\in[m+2]})=
		\begin{array}{lcr}
		\includegraphics[scale=0.65]{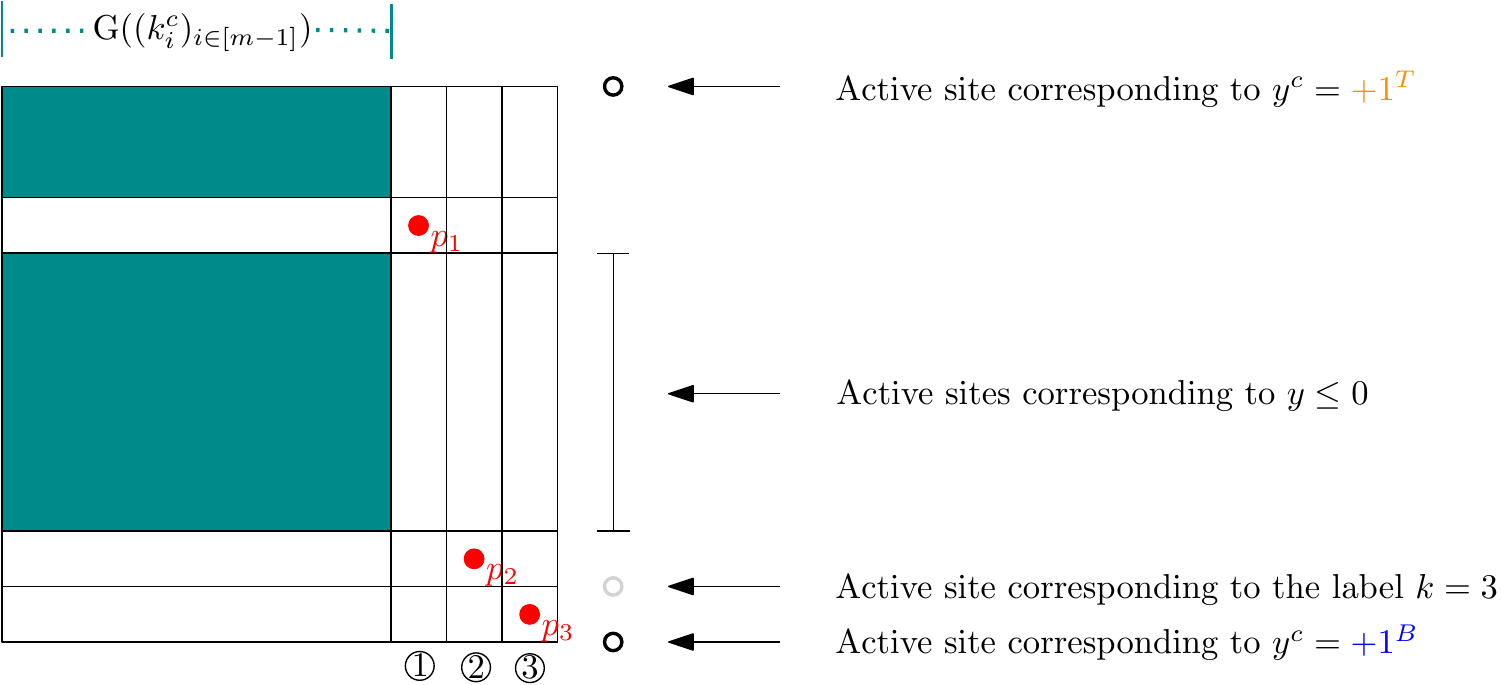}$$
		\end{array}
		\end{equation*}
		The situation is recorded by the list
		\begin{equation}
			\label{eq:key_exemp}
			S_3^3\big(\GG((k_i^c)_{i\in[m+2]})\big)=\big[\mathcircled{3},\mathcircled{2},(-\infty,0],\mathcircled{1}\big].
		\end{equation}
		We highlight that in this situation we would not be able to determine the complete list $S_3\big(\GG((k_i^c)_{i\in[m+2]})\big)$ only knowing the sequence of jumps $(y_i^c)_{i\in{[m-1,m+4]}}$. Indeed, in order to know which jump would activate the active site between $p_3$ and $p_2$  we would need to know the actual value of the label $k_{m+2}.$ However, the truncation $S_3^3\big(\GG((k_i^c)_{i\in[m+2]})\big)$ is determined only by the jumps as explained above.
		
		\item The fourth jump is $y_{m+2}^c=\textcolor{orange}{+1^T}$ and so we append to $\GG((k_i^c)_{i\in[m+2]})$ the value $m+3$. As a consequence, $\GG((k_i^c)_{i\in[m+3]})$ has
		\begin{itemize}
			\item an active site at the top (corresponding to $y^c=\textcolor{orange}{+1^T}$);
			\item an active site at the bottom (corresponding to $y^c=\textcolor{blue}{+1^B}$);
			\item an active site between $p_3$ and $p_2$ (that we can forget thanks to the previous explanation);
			\item an active site between $p_1$ and $p_4$ (corresponding to $y=0$);
			\item all the other active sites between $p_2$ and $p_1$ (corresponding to $y\leq -1$).
		\end{itemize}
		Therefore
		\begin{equation*}
		\quad\quad\quad\;\GG((k_i^c)_{i\in[m+3]})=
		\begin{array}{lcr}
		\includegraphics[scale=0.65]{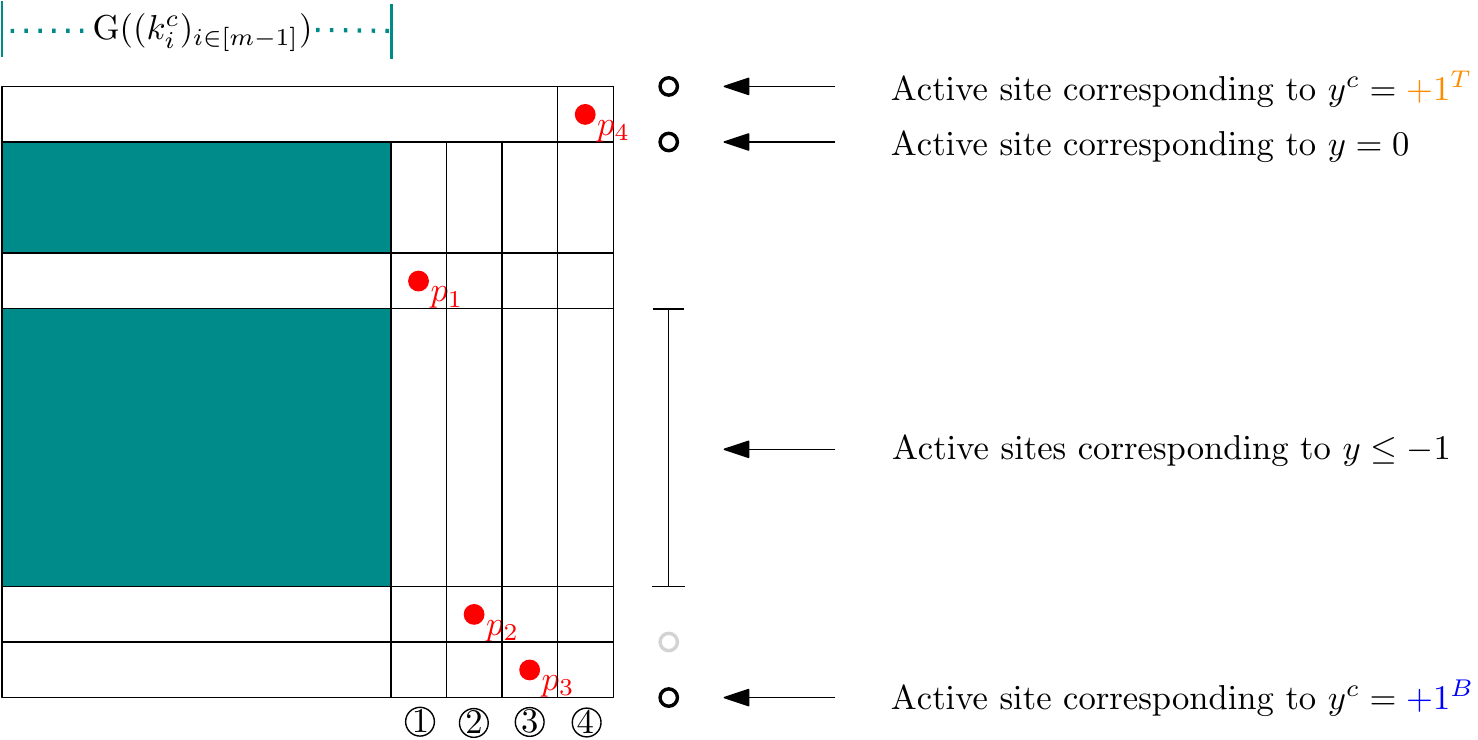}$$
		\end{array}
		\end{equation*}
		The situation is recorded by the list
		$$S_4^4\big(\GG((k_i^c)_{i\in[m+3]})\big)=\big[\mathcircled{3},\mathcircled{2},(-\infty,-1],\mathcircled{1},\{0\},\mathcircled{4}\big].$$
		
		\item The fifth jump is $y_{m+3}^c=\textcolor{orange}{+1^T}$ and so we append to $\GG((k_i^c)_{i\in[m+3]})$ the value $m+4$. As a consequence, $\GG((k_i^c)_{i\in[m+4]})$ has
		\begin{itemize}
			\item an active site at the top (corresponding to $y^c=\textcolor{orange}{+1^T}$);
			\item an active site at the bottom (corresponding to $y^c=\textcolor{blue}{+1^B}$); 
			\item an active site between $p_3$ and $p_2$ (that we can forget thanks to the previous explanation);
			\item an active site between $p_4$ and $p_5$ (corresponding to $y=0$);
			\item an active site between $p_1$ and $p_4$ (corresponding to $y=-1$);
			\item all the other active sites between $p_2$ and $p_1$ (corresponding to $y\leq -2$).
		\end{itemize}
		Therefore
		\begin{equation*}
		\quad\quad\quad\quad\GG((k_i^c)_{i\in[m+4]})=
		\begin{array}{lcr}
		\includegraphics[scale=0.63]{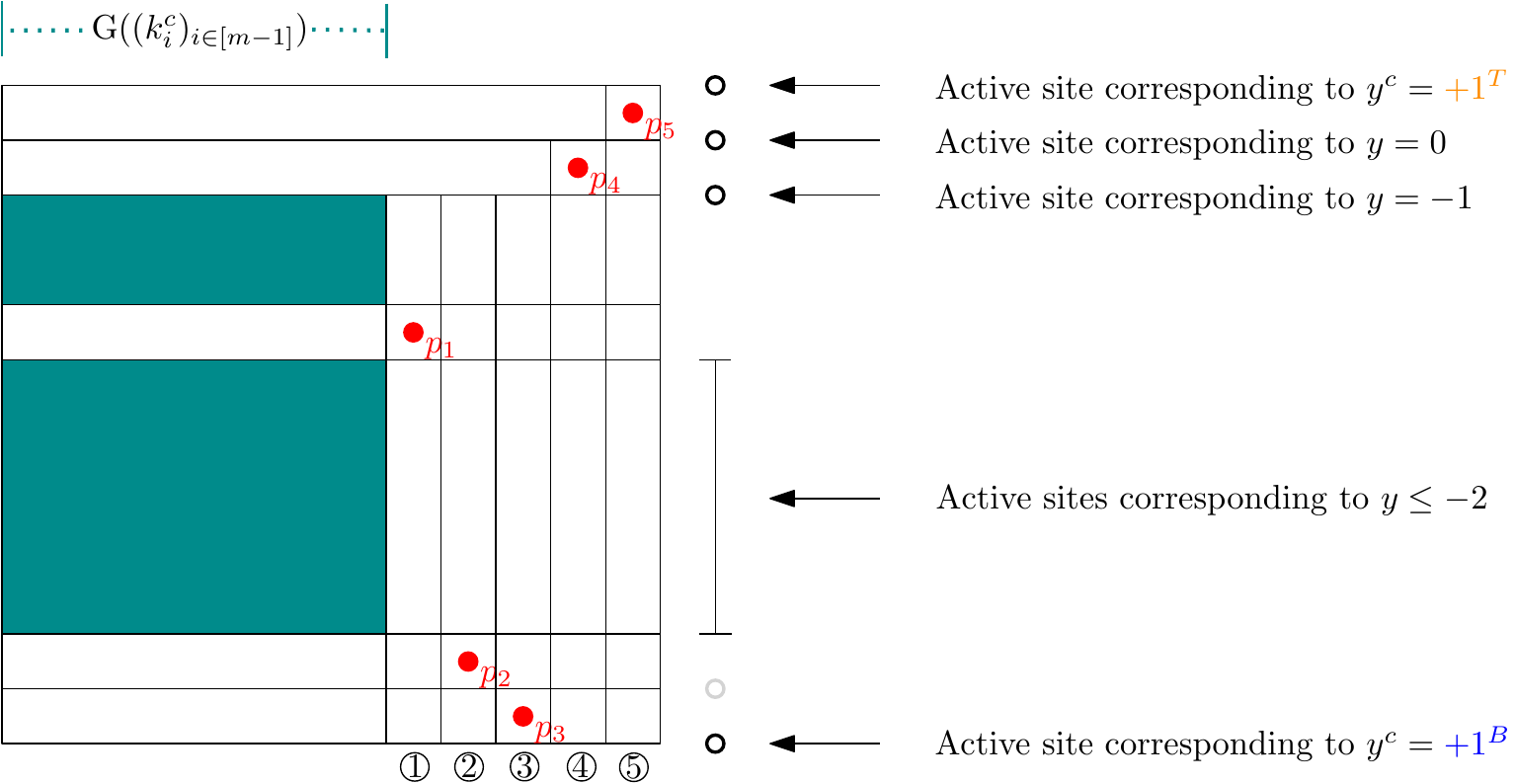}$$
		\end{array}
		\end{equation*}
		The situation is recorded by the list
		$$S_5^5\big(\GG((k_i^c)_{i\in[m+4]})\big)=\big[\mathcircled{3},\mathcircled{2},(-\infty,-2],\mathcircled{1},\{-1\},\mathcircled{4},\{0\},\mathcircled{5}\big].$$
		
		\item The sixth jump is $y_{m+4}^c=-7$ and so we append to $\GG((k_i^c)_{i\in[m+3]})$ a new final value between $p_1$ and $p_2$. As a consequence, $\GG((k_i^c)_{i\in[m+5]})$ has
		\begin{itemize}
			\item an active site at the top (corresponding to $y^c=\textcolor{orange}{+1^T}$);
			\item an active site at the bottom (corresponding to $y^c=\textcolor{blue}{+1^B}$); 
			\item an active site between $p_3$ and $p_2$ (that we can forget thanks to the previous explanation);
			\item all the other active sites between $p_2$ and $p_6$ (corresponding to $y\leq 0$).
		\end{itemize}
		Therefore
		\begin{equation*}
		\quad\quad\quad\;\GG((k_i^c)_{i\in[m+5]})=
		\begin{array}{lcr}
		\includegraphics[scale=0.6]{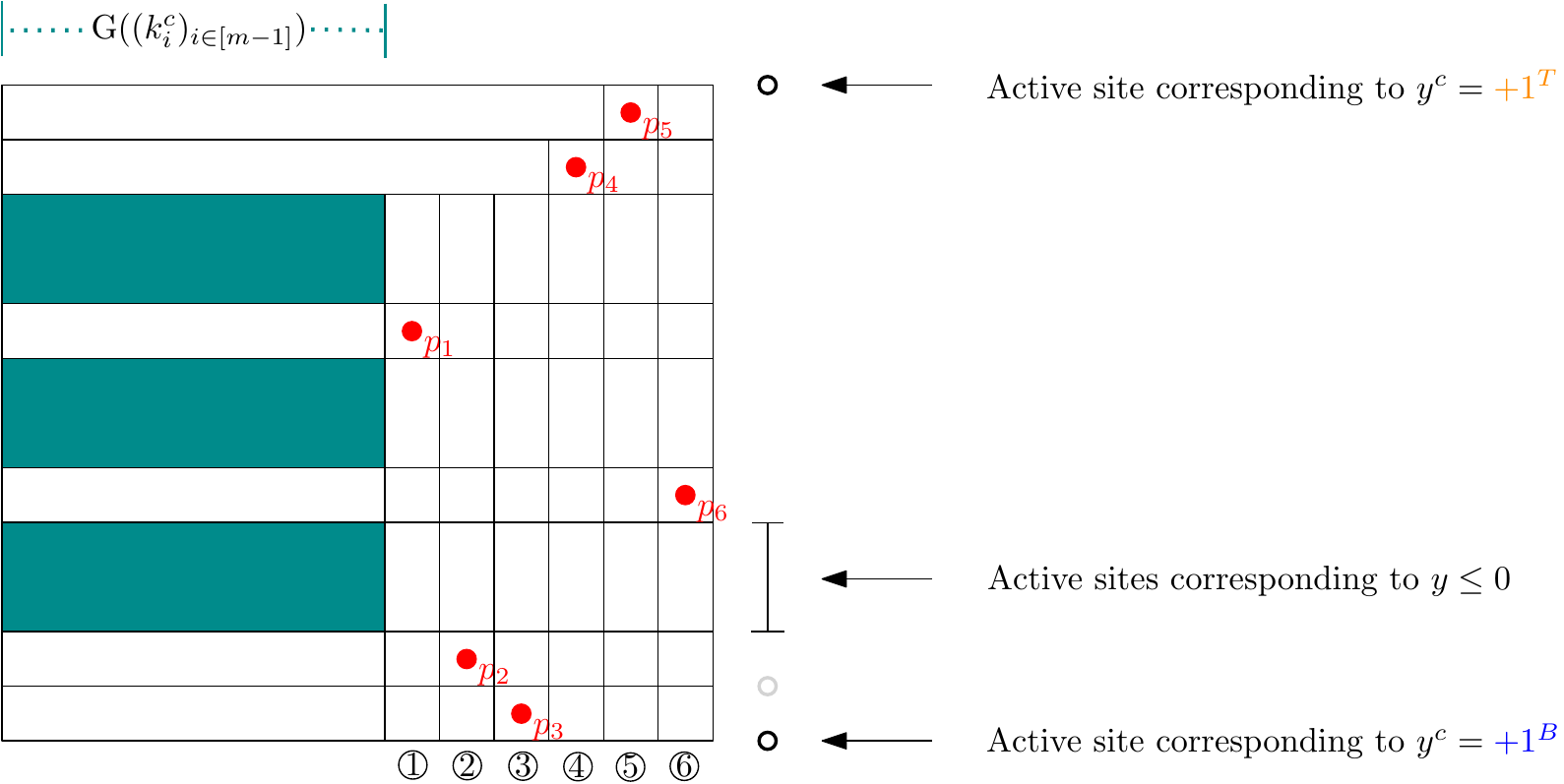}$$
		\end{array}
		\end{equation*}
		The situation is recorded by the list
		$$S_6^6\big(\GG((k_i^c)_{i\in[m+5]})\big)=\big[\mathcircled{3},\mathcircled{2},(-\infty,0],\mathcircled{6},\emptyset,\mathcircled{1},\emptyset,\mathcircled{4},\emptyset,\mathcircled{5}\big].$$
	\end{itemize}
We can conclude that the consecutive pattern $\pat_{[m,m+5]}\big(\GG((k_i^c)_{i\in[n]})\big)$ is determined by the sequence of jumps
$$(y_i^c)_{i\in{[m-1,m+4]}}=(-2,\textcolor{blue}{+1^B},\textcolor{blue}{+1^B},\textcolor{orange}{+1^T},\textcolor{orange}{+1^T},-7).$$
Specifically, $\Pat(-2,\textcolor{blue}{+1^B},\textcolor{blue}{+1^B},\textcolor{orange}{+1^T},\textcolor{orange}{+1^T},-7)=(326145)^{-1}=421563.$
\end{exmp}

Keeping this example in mind, we now prove the following lemma that easily implies Proposition \ref{prop:comb_lemma}.	

\begin{lem}
	\label{lem:ifwirwbfiurw}
	Fix $h\geq 1.$ For every sequence of labels $(k_i^c)_{i\in[n]}$  that satisfies for some $m\in[n+1-h]$ the condition 
	$$k_i>h+1,\quad\text{for all}\quad i\in[m,m+h-1],$$ 
	it is possible to reconstruct the list $S^{h}_{h}\big(\GG((k_i^c)_{i\in[m+h-1]})\big)$
	only from the sequence of jumps $(y_i^c)_{i\in[m-1,m+h-2]}.$
\end{lem}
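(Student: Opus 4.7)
My plan is to induct on $\ell \in \{1, \ldots, h\}$, proving at each stage that the list $S^\ell_\ell(\tau_\ell)$, with $\tau_\ell \coloneqq \GG\bigl((k^c_i)_{i \in [m+\ell-1]}\bigr)$, is determined by the prefix of jumps $(y^c_i)_{i \in [m-1, m+\ell-2]}$; the case $\ell = h$ is exactly the statement of the lemma. The base case $\ell = 1$ is a direct analysis of the single jump $y^c_{m-1}$. In each of the three situations $y^c_{m-1} = \textcolor{orange}{+1^T}$, $y^c_{m-1} = \textcolor{blue}{+1^B}$, or $y_{m-1} \leq 0$, the active-site recursion from Example~\ref{exmp:1423_4123_av} pins down where the new dot $p_1$ sits inside $\tau_1$, while the truncation defining $S^1_1$ collapses all of the remaining information about the unknown deep active sites into a single $(-\infty, s]$ with $s$ depending only on the jump; for instance, $y_{m-1} \leq 0$ yields $\bigl[(-\infty, 0], \mathcircled{1}\bigr]$, matching the first step of Example~\ref{exemp:messup}.

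For the inductive step, the claim is that $S^{\ell+1}_{\ell+1}(\tau_{\ell+1})$ is a function of $S^\ell_\ell(\tau_\ell)$ and the new jump $y^c_{m+\ell-1}$, and I would read off the update rule from the three active-site recursions. A $\textcolor{orange}{+1^T}$ jump places $p_{\ell+1}$ above every earlier dot, shifts the tracked jump labels down by one, appends a fresh $\bigl[\{0\}, \mathcircled{\ell+1}\bigr]$ at the right end, and extends the left-truncation by one more position. A $\textcolor{blue}{+1^B}$ jump is symmetric: it prepends $\mathcircled{\ell+1}$ at the left end (the circled indices are column numbers and keep their values), inserts a new middle-active-site entry just above it, and likewise extends the truncation. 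A jump $y \leq 0$ inserts $\mathcircled{\ell+1}$ at the position of the targeted middle active site inside $S^\ell_\ell(\tau_\ell)$, deletes every tracked position strictly above it, and re-truncates one extra leftmost position. In each case the net effect on the left is the absorption of exactly one more position, which matches the passage from $S^\ell_\ell$ to $S^{\ell+1}_{\ell+1}$ in the definition of the truncated list.

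The only genuine obstacle is verifying, in the third case, that the position targeted by $y$ lies strictly to the right of the left-truncation boundary of $S^{\ell}_{\ell}(\tau_{\ell})$; otherwise the new dot $p_{\ell+1}$ would sit inside the unknown tail $(-\infty, \cdot]$ and the update could not be performed from the truncated data alone. This is where the hypothesis $k_i > h+1$ enters. Since the child label $k_{m+\ell} = k_{m+\ell-1} + y$ must also exceed $h+1$, we have $|y| < k_{m+\ell-1} - h - 1$, so $y$ is among the top $k_{m+\ell-1} - h - 2$ middle-active-site jump labels of $\tau_\ell$. On the other hand, only $\ell - 1$ leftmost positions of $S_{\ell}(\tau_\ell)$ have been truncated away, leaving at least $(k_{m+\ell-1} - 2) - (\ell - 1)$ positions visible above the boundary; a short count using $\ell \leq h$ shows that $y$ always lands inside this tracked range. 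This is exactly the local-to-global argument made ad hoc at each step of Example~\ref{exemp:messup}, and once it is made uniformly across all $\ell$, the induction closes without further difficulty.
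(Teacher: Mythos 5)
Your proposal is correct and follows essentially the same route as the paper's proof: induction on the number of appended dots, a three-way case analysis on the jump ($\textcolor{orange}{+1^T}$, $\textcolor{blue}{+1^B}$, $y\leq 0$), and the key observation that the hypothesis $k_i>h+1$ forbids the new dot from landing in a truncated position, which is exactly the Claim in the paper's treatment of the $y\leq 0$ case (your count via jump values is equivalent to the paper's count via the small child labels $3,\dots,\ell+1$). The only blemish is the harmless off-by-one in ``top $k_{m+\ell-1}-h-2$'' (it should be $k_{m+\ell-1}-h-1$), which does not affect the conclusion.
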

\begin{rem}
	We highlight that the following proof also establishes an explicit way (that can be easily implemented) to reconstruct the list $S^{h}_{h}\big(\GG((k_i^c)_{i\in[m+h-1]})\big)$ from the sequence of jumps $(y_i^c)_{i\in[m-1,m+h-2]}.$
\end{rem}

\begin{proof}
	Let $(k_i^c)_{i\in[n]}$ be a sequence such that for some $m\in[n+1-h],$ 
	$$k_i^c>h+1,\quad\text{for all}\quad i\in[m,m+h-1].$$
	We prove the statement by induction. We first show that $S^{1}_{1}\big(\GG((k_i^c)_{i\in[m]})\big)$ is determined by the jump label $y_{m-1}^c$ and then we show how to reconstruct the list $S^{j+1}_{j+1}\big(\GG((k_i^c)_{i\in[m+j]})\big)$ from the list $S^{j}_{j}\big(\GG((k_i^c)_{i\in[m+j-1]})\big)$ according to the value of the jump label $y_{m+j-1}^c.$
	
	We explain how to determine $S^{1}_{1}\big(\GG((k_i^c)_{i\in[m]})\big)$ from the jump label $y_{m-1}^c$. The diagram of the permutation $\GG((k_i^c)_{i\in[m]})$ is obtained from the diagram of the permutation $\GG((k_i^c)_{i\in[m-1]})$ appending a new final dot, denoted by $p_1$. We have to distinguish three different cases:
	\begin{itemize}
		\item \underline{$y_{m-1}^c=\textcolor{orange}{+1^T}:$} In this case $p_1$ is appended in the top active site. Therefore, the permutation $\GG((k_i^c)_{i\in[m]})$ has an active site at the top and all the other active sites are below $p_1.$ We can conclude that 
		$$S^{1}_{1}\big(\GG((k_i^c)_{i\in[m]})\big)=\big[(-\infty,0],\mathcircled{1}\big].$$
		\item \underline{$y_{m-1}^c=\textcolor{blue}{+1^B}:$} In this case $p_1$ is appended in in the bottom active site. Therefore, the permutation $\GG((k_i^c)_{i\in[m]})$ has an active site at the bottom and all the other active site are above $p_1.$ We can conclude that 
		$$S^{1}_{1}\big(\GG((k_i^c)_{i\in[m]})\big)=\big[\mathcircled{1},(-\infty,0]\big].$$
		\item \underline{$y_{m-1}\leq 0:$} In this case $p_1$ is appended in one of the active sites different from the top and the bottom one. Therefore, $\GG((k_i^c)_{i\in[m]})$ has an active site at the top and all the other active sites below $p_1$ (indeed, we recall that former active sites above $p_1$ are no longer active in $\GG((k_i^c)_{i\in[m]})$ except the top one, see Example \ref{exmp:1423_4123_av}). We can conclude that 
		$$S^{1}_{1}\big(\GG((k_i^c)_{i\in[m]})\big)=\big[(-\infty,0],\mathcircled{1}\big].$$
	\end{itemize}
	We now show how to reconstruct the list $S^{j+1}_{j+1}\big(\GG((k_i^c)_{i\in[m+j]})\big)$ from the list $S^{j}_{j}\big(\GG((k_i^c)_{i\in[m+j-1]})\big)$ according to the value of the jump label $y_{m+j-1}^c.$ Without loss of generality, we assume that the list $S^{j}_{j}\big(\GG((k_i^c)_{i\in[m+j-1]})\big)$ is equal to
	\begin{equation*}
	\Big[[-a_{j+1},-a_j-1],\mathcircled{b_j},[-a_j,-a_{j-1}-1],\dots,\mathcircled{b_2} ,[-a_2,-a_1-1],\mathcircled{b_1}, [-a_1,0]\Big],
	\end{equation*}
	where $\{b_1,\dots,b_j\}=[1,j]$, $0\leq a_1\leq a_2\leq\dots\leq a_{j+1}\leq +\infty$ and there exists $s\leq j+1$ such that $a_{s}=+\infty$ (with the additional convention on intervals that $[-\infty,-\infty-1]=\emptyset$).
	
	The diagram of the permutation $\GG((k_i^c)_{i\in[m+j]})$ is obtained from the diagram of the permutation $\GG((k_i^c)_{i\in[m+j-1]})$ appending a new final dot, denoted by $p_{j+1}$.
	We distinguish again three different cases:
	\begin{itemize}
		\item \underline{$y_{m+j-1}^c=\textcolor{orange}{+1^T}:$} In this case $p_{j+1}$ is appended in the top active site. Therefore, the active sites  of $\GG((k_i^c)_{i\in[m+j]})$ are the same as $\GG((k_i^c)_{i\in[m+j-1]})$ excluding the one that has been activated. Moreover, there is a new top active site and a new active site immediately below $p_{j+1}.$ We also note that each active site of $\GG((k_i^c)_{i\in[m+j-1]})$ corresponding to a $-r$ jump then corresponds to a $-r-1$ jump. Therefore, the new list $S^{j+1}_{j+1}\big(\GG((k_i^c)_{i\in[m+j]})\big)$ is equal to
		\begin{multline*}
		\Big[[-a_{j+1}\textcolor{red}{-1},-a_j-1\textcolor{red}{-1}],\mathcircled{b_j},[-a_j\textcolor{red}{-1},-a_{j-1}-1\textcolor{red}{-1}],\dots\\
		\dots,\mathcircled{b_2} ,[-a_2\textcolor{red}{-1},-a_1-1\textcolor{red}{-1}],\mathcircled{b_1}, [-a_1\textcolor{red}{-1},0],\textcolor{red}{\mathcircled{j+1}}\Big],
		\end{multline*}
		where we highlighted in red the modifications from the list $S^{j}_{j}\big(\GG((k_i^c)_{i\in[m+j-1]})\big).$
		\item \underline{$y_{m+j-1}^c=\textcolor{blue}{+1^B}:$} In this case $p_{j+1}$ is appended in the bottom active site. Therefore, the active sites  of $\GG((k_i^c)_{i\in[m+j]})$ are the same as $\GG((k_i^c)_{i\in[m+j-1]})$ excluding the one that has been activated. Moreover, there is a new bottom active site and a new active site immediately above $p_{j+1}$ (that by definition of $S^{j+1}_{j+1}$ will be not recorded in the list $S^{j+1}_{j+1}\big(\GG((k_i^c)_{i\in[m+j]})\big)$ since it corresponds to the label 3). We also note that each active site of $\GG((k_i^c)_{i\in[m+j-1]})$ corresponding to a $-r$ jump then still corresponds to a $-r$ jump. Therefore, the new list $S^{j+1}_{j+1}\big(\GG((k_i^c)_{i\in[m+j]})\big)$ is equal to
		\begin{equation*}
		\Big[\textcolor{red}{\mathcircled{j+1}},[-a_{j+1},-a_j-1],\mathcircled{b_j},[-a_j,-a_{j-1}-1],\dots,\mathcircled{b_2} ,[-a_2,-a_1-1],\mathcircled{b_1}, [-a_1,0]\Big].
		\end{equation*}
		\item \underline{$y_{m+j-1}\leq 0:$} In this case $p_{j+1}$ is appended in one of the active sites different from the top and the bottom one. Note that there exists an interval of positions in $S^{j}_{j}\big(\GG((k_i^c)_{i\in[m+j-1]})\big)$ that contains the value $y_{m+j-1},$ say $[-a_t,-a_{t-1}-1]$ for $t\in[1,j+1]$. We distinguish two cases:
		\begin{itemize}
			\item If $a_t\neq-\infty$ then we can immediately conclude that $p_{j+1}$ is positioned between the two dots with column indices $b_t$ and $b_{t-1}.$
			\item If $a_t=-\infty$ there is a potential problem: indeed, $p_{j+1}$ could be placed in one of the forgotten positions in $S^{j}_{j}\big(\GG((k_i^c)_{i\in[m+j-1]})\big)$ (we refer to the discussion before \cref{eq:key_exemp} for an example).
			
			\medskip
			
			\underline{Claim}: The dot $p_{j+1}$ is still positioned between the two dots with column indices $b_t$ and $b_{t-1}.$ 
			\begin{proof}
				The dot $p_{j+1}$ cannot be placed in one of the forgotten positions in the list  $S^{j}_{j}\big(\GG((k_i^c)_{i\in[m+j-1]})\big)$ because this can happen only if the label $k_{m+j}$ is smaller or equal than $3+(j-1)$ (by definition of $S^{j}_{j}$) but by assumption $k_{m+j}>h+1$ and $j\leq h-1.$
			\end{proof}
			
		\end{itemize}

		Therefore the active sites of $\GG((k_i^c)_{i\in[m+j]})$ are:
		\begin{itemize}
			\item the top active site corresponding to the $\textcolor{orange}{+1^T}$ jump;
			\item a new active site immediately below $p_{j+1}$ corresponding to the $0$ jump;
			\item all the active sites of $\GG((k_i^c)_{i\in[m+j-1]})$ that were below the active site replaced by $p_{j+1}.$ Moreover, an active site of $\GG((k_i^c)_{i\in[m+j-1]})$ corresponding to a $-r$ jump that is also active in $\GG((k_i^c)_{i\in[m+j]})$ corresponds to a $-r-y_{m+j-1}$ jump.
		\end{itemize}
		 Therefore, the new list $S^{j+1}_{j+1}\big(\GG((k_i^c)_{i\in[m+j]})\big)$ is equal to
		\begin{multline*}
		\quad\quad\Big[[-a_{j+1}\textcolor{red}{-y_{m+j-1}},-a_j-1\textcolor{red}{-y_{m+j-1}}],\mathcircled{b_j},[-a_j\textcolor{red}{-y_{m+j-1}},-a_{j-1}-1\textcolor{red}{-y_{m+j-1}}],\dots\\
		\dots\mathcircled{b_{t+1}} ,[-a_{t+1}\textcolor{red}{-y_{m+j-1}},\textcolor{red}{0}],\textcolor{red}{\mathcircled{j+1}},\mathcircled{b_t}\dots\mathcircled{b_1}\Big].
		\end{multline*}
	\end{itemize}
This concludes the proof.
\end{proof}

We can finally prove the main result of this section.

\begin{proof}[Proof of Proposition \ref{prop:comb_lemma}]
	Fix $h\geq 1.$ Let $(k_i^c)_{i\in[n]}$ be a sequence such that for some integer $m\in[n+1-h],$  
	$$k_i^c>h+1,\quad\text{for all}\quad i\in[m,m+h-1].$$ 
	Using Observation $\ref{obs:key_for_proof}$ we have that $$\pat_{[m,m+h-1]}\big(\GG((k_i^c)_{i\in[n]})\big)=\Pat\left(S^{h}_{h}\left(\GG((k_i^c)_{i\in[m+h-1]})\right)\right).$$
	Since by Lemma \ref{lem:ifwirwbfiurw} the list $S^{h}_{h}\big(\GG((k_i^c)_{i\in[m+h-1]})\big)$ is only determined by the sequence of jumps $(y_i^c)_{i\in[m-1,m+h-2]}$ we conclude that Assumption \ref{ass3} is satisfied.
\end{proof}

\subsection{Five other families of permutations encoded by the same generating tree as \{1423,4123\}-avoiding permutations} 

In \cite[Theorem 8 and Corollary 9]{kremer2000permutations} it was shown that also the following five families of permutations\footnote{We recall that the various results in this article are stated for families of permutations that grows on the top and not on the right. Nevertheless, it is enough to use the diagonal symmetry of the diagram for pattern-avoiding permutations.} are encoded by the same generating tree used for \{1423,4123\}-avoiding permutations:
\begin{equation}\label{eq:five_more_fam}
\Av(1234,2134),\quad
\Av(1324,3124),\quad
\Av(2314,3214),\quad
\Av(2413,4213),\quad
\Av(3412,4312).
\end{equation}

These families, having the same generating tree as \{1423,4123\}-avoiding permutations, satisfy Assumptions \ref{ass1} and \ref{ass2}.

The main difference between these six families (the five in \cref{eq:five_more_fam} and $\Av(1423,4123)$) is the following. Recall that in Example \ref{exmp:1423_4123_av} we described the behavior of the set $\text{AS}(\sigma)$ for $\sigma\in\Av(1423,4123)$ and how the statistic $|\text{AS}(\cdot)|$ determines the succession rule of the generating tree for $\{1423,4123\}$-avoiding permutations. These properties are specific for each of the six mentioned family and are explicitly described in the proof of \cite[Theorem 8]{kremer2000permutations}. Nevertheless, the specific properties of each of the five above mentioned families still imply Assumption \ref{ass3} (the proof of this fact is similar to the one given for Proposition \ref{prop:comb_lemma} and we omit it). Therefore the main results of our paper also hold for the five families in \cref{eq:five_more_fam}.

\begin{rem}
	In the paper of Kremer \cite{kremer2000permutations}, four further families of permutations enumerated using the same generating tree as the one for \{1423,4123\}-avoiding permutations are mentioned. For instance the family of separable permutations, i.e.\ permutations avoiding the patterns $2413$ and $3142$ (this family was first studied by West in \cite[Lemma 4.1.]{MR1360119}). We highlight that \cref{ass3} is not satisfied by these four families of permutations. Indeed, given a permutation $\sigma$ in one of these four families, the behavior of consecutive patterns in $\sigma$ cannot be determined from the behavior of consecutive increments in the corresponding walk $\GG(\sigma)$. In other words, local properties of $\sigma$ cannot be deduced form local properties of $\GG(\sigma)$.
\end{rem}

\subsection{Permutations avoiding a pattern of size three}\label{sect:three_patterns}

From \cite[Lemma 2.2]{MR1360119} we have that the family $\Av(123)$ is encoded by a generating tree determined by the following succession rule
\begin{equation}\label{eq:gen_rule_fam_A2}
\begin{cases} 
\text{Root label}: (2) \\
(k)\to (2),(3),\dots,(k+1). 
\end{cases}
\end{equation}

Note that the labels of the generating tree are greater or equal than $2$ but the label of the root is not equal to $1$. In order to fix this small issue, it is enough to prepend a new root labeled by $1$ to the original root of the generating tree. Then \cref{ass1} holds.

In this way, every permutation of size $n$ is bijectively encoded by a sequence of $n+1$ labels $(1,k_1=2,k_2,\dots,k_n)$ such that the sequence $(k_1=2,k_2,\dots,k_n)$ is consistent with the succession rule in \cref{eq:gen_rule_fam_A2}. 
We still denote by $\GG$ this bijection between sequences of labels and permutations in the generating tree.

Solving the system in \cref{eq:frhweibdowenpen} and using the same proof of \cref{prop:ehibvreibcpiwrnfc} we obtain the following.

\begin{prop}\label{prop:sampling_permu2}
	Assumption \ref{ass2} holds with $\alpha_y=2^{-2+y}$ for all $y\in\Z_{\leq 1}$, and $c_y=1$ for all $y\in\Z_{\leq1}$. Indeed, if $\bm\sigma_n$ is a uniform 123-avoiding permutation of size $n$, then
	\begin{equation}\label{eq:non-colored-walk}
	\GG^{-1}(\bm\sigma_n)\stackrel{d}{=}({\bm{X}}_{i}|({\bm{X}}_{j})_{j\in[2,n+1]}\geq 2,{\bm{X}}_{n+2}=2)_{i\in[n+1]},
	\end{equation}
where ${\bm{X}}$ is a random walk with jump distribution $(\alpha_y)_{y\in\Z_{\leq 1}}$ starting at $1$ at time 1.
\end{prop}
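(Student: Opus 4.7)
The plan is to follow the exact same recipe developed in \cref{sect:sampling_uniformly}, which is why the statement explicitly points to \cref{prop:ehibvreibcpiwrnfc} as the template. Conceptually there are three tasks: (i) put the generating tree in the form required by \cref{eq:ryuhfweubfweodq} with $\beta=2$; (ii) solve the system \cref{eq:frhweibdowenpen} to identify $p$, $q$ and hence $\alpha_y$; (iii) mimic the elementary telescoping computation from the proof of \cref{prop:ehibvreibcpiwrnfc} to recognize the conditioned walk as a uniform permutation in $\Av(123)$.

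For step (i), after prepending a root labeled $1=\beta-1$ so that \cref{ass1} holds with $\beta=2$, I would check that the rule in \cref{eq:gen_rule_fam_A2} satisfies \cref{eq:ryuhfweubfweodq}. Indeed, from a label $k\ge 2$ the set of children is $\{2,3,\dots,k+1\}$, which equals $\{k+y : y\in\mathbb{Z}_{\le 1},\, k+y\ge 2\}$; in particular $\mathcal Y_1=\mathbb{Z}_{\le 1}$ and every multiplicity equals $m_y=1$, so the generating tree requires no coloring (explaining $c_y=1$). The prepended root (label $1$) has a unique child, labeled $2$, corresponding to the forced jump $y=1$.

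For step (ii), I would set $\varphi(t)=\sum_{y=-1}^{\infty} m_{-y}\,t^{y}= \tfrac{1}{t}+\tfrac{1}{1-t}$ for $t\in(0,1)$. Then $\varphi'(t)=-t^{-2}+(1-t)^{-2}=0$ gives $t=1/2$, hence $\varphi(1/2)=4$, so by the corollary preceding \cref{prop:ehibvreibcpiwrnfc} one obtains $p=1/4$, $q=2$, and
\[
\alpha_y=p\cdot q^{y}\cdot m_y=2^{-2+y},\qquad y\in\mathbb{Z}_{\le 1}.
\]
A one-line check confirms $\sum_{y\le 1}\alpha_y=1$, $\sum_{y\le 1}y\,\alpha_y=0$ and $\sum_{y\le 1}y^2\,\alpha_y<\infty$, so $(\alpha_y)$ is a centered probability distribution on $\mathbb{Z}_{\le 1}$ with finite variance.

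For step (iii), fix any consistent sequence of labels $(1,2,k_2,\dots,k_n)$ in the generating tree and let $y_i$ denote the associated jumps. Since every possible jump from a label $k\ge 2$ is realized by exactly one child, there is a bijection between such sequences and trajectories of $\bm X$ with $\bm X_1=1$ and $\bm X_j\ge 2$ for $j\in[2,n+1]$. Telescoping gives
\[
\P\!\left((\bm X_i)_{i\in[n+1]}=(1,2,k_2,\dots,k_n),\,\bm X_{n+2}=2\right)=\prod_{i=1}^{n+1}\alpha_{y_i}=p^{\,n+1}q^{\bm X_{n+2}-\bm X_1}=2\cdot 4^{-(n+1)},
\]
which is independent of the specific choice of labels. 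Hence the conditional law on $\{(\bm X_j)_{j\in[2,n+1]}\ge 2,\,\bm X_{n+2}=2\}$ is uniform over consistent label sequences of length $n+1$, which via $\mathcal G$ is precisely the law of a uniform permutation in $\Av^n(123)$, giving \cref{eq:non-colored-walk}. There is essentially no obstacle here: the argument is a verbatim copy of the one for \cref{prop:ehibvreibcpiwrnfc}; the only tiny bookkeeping point is that the walk starts at $\bm X_1=1<\beta$, reflecting the prepended root, which is exactly why the conditioning on $\bm X_j\ge\beta$ is enforced only for $j\in[2,n+1]$.
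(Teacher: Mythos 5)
Your proposal is correct and follows essentially the same route as the paper, which proves this proposition exactly by solving the system in \cref{eq:frhweibdowenpen} (giving $p=1/4$, $q=2$, hence $\alpha_y=2^{-2+y}$) and repeating the telescoping argument of \cref{prop:ehibvreibcpiwrnfc} with the index shift caused by the prepended root. Your verification of \cref{eq:ryuhfweubfweodq} (including the root label $1$ with its forced jump $+1$) and the computation $p^{n+1}q$ independent of the label sequence match the paper's intended argument.
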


\begin{rem}
 Note that the random walk ${\bm{X}}$ considered in \cref{eq:non-colored-walk} is \emph{not} colored. Indeed, there is no need of considering colored walks as there are no repeated labels in the succession rule in \cref{eq:gen_rule_fam_A2}.
\end{rem}

Assumption \ref{ass3} follows with a proof similar to the one given for Proposition \ref{prop:comb_lemma} using the specific behavior of the set $\text{AS}(\sigma)$ for $\sigma\in\Av(123)$ and how the statistic $|\text{AS}(\cdot)|$ determines the succession rule of the generating tree (for that we refer to the proof of \cite[Lemma 2.2]{MR1360119}). 

Therefore the main results of our paper hold for the family $\Av(123)$.

\medskip

We now consider the family $\Av(132)$. Since in \cite[Lemma 2.4]{MR1360119} it is shown that the family $\Av(132)$ has the same generating tree as the one for 123-avoiding permutations, then Assumptions \ref{ass1} and \ref{ass2} are satisfied. Once again Assumption \ref{ass3} follows using the specific behavior of the set $\text{AS}(\sigma)$ for $\sigma\in\Av(132)$ (for that we refer to the proof of \cite[Lemma 2.4]{MR1360119}).

\subsection{Two families of permutations avoiding generalized patterns}\label{sect:strange_patterns}

In \cite{MR2376115}, Elizalde constructed generating trees with one, two, and three-dimensional labels for some classes of permutations avoiding generalized patterns of size three and four. In particular, the  following two families of permutations satisfy all our assumptions:
\begin{itemize}
	\item $\mathcal{A}=\{213,\bar{2}\underbracket[.5pt][1pt]{31}\}$-avoiding permutations;
	\item $\mathcal{B}=\{213,\bar{2}^{o}\underbracket[.5pt][1pt]{31}\}$-avoiding permutations.
\end{itemize}
Both families are subsets of $\Av(213)$. We now explain the meaning of the second (generalized) pattern in each family:
\begin{itemize}
	\item For the first family, we additionally require that every permutation $\sigma\in\mathcal{A}$ avoids the generalized pattern $\bar{2}\underbracket[.5pt][1pt]{31}$, i.e.\ every descent in $\sigma$ (an occurrence of the generalized pattern $\underbracket[.5pt][1pt]{21}$) is part of an occurrence of $2\underbracket[.5pt][1pt]{31}$; equivalently, for any index $i$ such that $\sigma_i>\sigma_{i+1}$ there is an index $j<i$ such that $\sigma_i>\sigma_j>\sigma_{i+1}$.The bar indicates that the 2 is forced whenever a 31 occurs. For example, the permutation $4627513$ avoids $\bar{2}\underbracket[.5pt][1pt]{31}$, but $2475613$ does not.
	\item For the second family, we additionally require that every permutation $\sigma\in\mathcal{B}$ avoids the generalized pattern $\bar{2}^{o}\underbracket[.5pt][1pt]{31}$, i.e.\ every descent in $\sigma$ (an occurrence of the generalized pattern $\underbracket[.5pt][1pt]{21}$) is part of an odd number of occurrences of $2\underbracket[.5pt][1pt]{31}$; equivalently, for any index $i$ such that $\sigma_i>\sigma_{i+1}$ there is an odd number of indices $j<i$ such that $\sigma_i>\sigma_j>\sigma_{i+1}$.
\end{itemize}

\begin{rem}
	Despite the very original nature of this second excluded pattern, the family $\mathcal{B}$ has some interesting aspects. For instance, it is in bijection with the family of lattice paths from $(0, 0)$ to $( n, \lfloor n/2\rfloor )$ with increments $(1, 0)$ and $(0, 1)$ that never go above the	line $y = x/2$. For a proof of this fact, see the discussion after \cite[Proposition 2.2]{MR2376115}. A second interesting aspect is highlighted in \cref{rem:sublattice}.
\end{rem}

\subsubsection*{The family $\mathcal A$}

From \cite[Proposition 2.1]{MR2376115} we have that the family $\mathcal A$ is encoded by the generating tree determined by the following succession rule
\begin{equation}\label{eq:gen_rule_fam_A}
\begin{cases} 
\text{Root label}: (1) \\
(k)\to (1),(2),\dots,(k-1),(k+1). 
\end{cases}
\end{equation}

Assumptions \ref{ass1} (with $\beta=1$) follows for the family $\mathcal A$ from the equation above (with the same slight modification as in the case of 123-avoiding permutations). 

Solving the system in \cref{eq:frhweibdowenpen} and building on the same ideas used for 123-avoiding permutations, we obtain the following.

\begin{prop}
	Assumption \ref{ass2} holds with $\alpha_y=2^{y}/3$, for all $y\in\Z_{<0}$, $\alpha_0=0$ and $\alpha_{1}=2/3$ and, $c_y=1$ for all $y\in\Z_{<0}\cup\{1\}$ and $c_0=0$.
\end{prop}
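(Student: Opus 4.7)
The plan is to apply \cref{prop:ehibvreibcpiwrnfc}, which reduces the verification of \cref{ass2} to checking two things: the structural condition \cref{eq:ryuhfweubfweodq} on the succession rule, and the existence of parameters $p,q>0$ satisfying the system \cref{eq:frhweibdowenpen}.

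For \cref{eq:ryuhfweubfweodq}, I would read the set of jumps directly off the succession rule \cref{eq:gen_rule_fam_A}: from a label $(k)\geq 1$, the children are exactly the labels $(k+y)$ for $y\in\mathcal{Y}_1:=\mathbb{Z}_{<0}\cup\{+1\}$, each appearing with multiplicity $m_y=1$, subject only to the constraint $k+y\geq\beta=1$. Since no jump value is repeated we do not need any coloring, and we may take $c_y=m_y$, giving $c_y=1$ for $y\in\mathbb{Z}_{<0}\cup\{1\}$ and $c_0=0$, matching the statement. Note in particular that the jump $y=0$ does not occur, which forces $\alpha_0=0$.

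For the system \cref{eq:frhweibdowenpen}, the centering equation $\sum_{y\in\mathcal{Y}_1} y\cdot q^y m_y=0$ becomes $q=\sum_{r\geq 1} r\cdot q^{-r}$. Using the identity $\sum_{r\geq 1} r x^r=x/(1-x)^2$ with $x=1/q$ (valid for $q>1$), this reduces to $(q-1)^2=1$, forcing $q=2$. The normalization $\sum_{y\in\mathcal{Y}_1} q^y m_y=2+\sum_{r\geq 1} 2^{-r}=3$ then gives $p=1/3$, and the finite-variance condition $\sum_{r\geq 1} r^2\cdot 2^{-r}+2<\infty$ is automatic. Substituting into $\alpha_y=p\cdot q^y\cdot m_y$ recovers exactly $\alpha_y=2^y/3$ for $y\in\mathbb{Z}_{<0}$, $\alpha_0=0$, and $\alpha_1=(1/3)\cdot 2\cdot 1=2/3$.

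I do not anticipate any real obstacle: the computation is essentially parallel to the one sketched above for $\Av(123)$, the only subtle bookkeeping points being that (a) the jump $y=0$ is absent from the rule (hence $\alpha_0=c_0=0$), and (b) one must remember to prepend a root at label $\beta-1=0$, as in the $\Av(123)$ case, so that the first step of the walk into the original root label $(1)$ corresponds to the jump $+1\in\mathcal{Y}_1$.
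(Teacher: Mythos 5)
Your proposal is correct and follows the same route the paper intends: reduce \cref{ass2} to \cref{prop:ehibvreibcpiwrnfc}, read off $\mathcal{Y}_1=\mathbb{Z}_{<0}\cup\{+1\}$ with all multiplicities $1$ from the succession rule \cref{eq:gen_rule_fam_A}, solve \cref{eq:frhweibdowenpen} to get $q=2$, $p=1/3$, and handle the root offset by prepending a label $\beta-1=0$ exactly as the paper does for $\Av(123)$. The paper leaves the computation as "solving the system and building on the same ideas"; you have simply carried it out, and the arithmetic (including $\alpha_0=c_0=0$ from the absence of the $0$-jump, and the geometric-series identity forcing $q=2$) is correct.
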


Assumption \ref{ass3} follows again from the specific behavior of the set $\text{AS}(\sigma)$ for $\sigma\in\mathcal{A}$ and how the statistic $|\text{AS}(\cdot)|$ determines the succession rule of the generating tree (for that we refer to the proof of \cite[Proposition 2.1]{MR2376115}). 

Therefore the main results of our paper also hold for the family $\mathcal{A}$.

\subsubsection*{The family $\mathcal B$}

From \cite[Proposition 2.2]{MR2376115} we have that the family $\mathcal B$ is encoded by the generating tree determined by the following succession rule
\begin{equation}\label{eq:gen_rule_fam_B}
\begin{cases} 
\text{Root label}: (1) \\
(k)\to (1+\mathds{1}_{\{k\text{ is odd}\}})\dots (k-3),(k-1),(k+1),
\end{cases}
\end{equation}
that is, the labels of the children of a vertex labeled by $k$ are the numbers $\ell$ such that $1 \leq \ell \leq k + 1$ and $k-\ell$ is odd.

Assumptions \ref{ass1} (with $\beta=1$) follows for the family $\mathcal B$ from \cref{eq:gen_rule_fam_B} (with the same slight modification as in the case of 123-avoiding permutations). 
Once again, solving the system in \cref{eq:frhweibdowenpen} and building on the same ideas used for 123-avoiding permutations, we obtain the following.

\begin{prop}
	 Assumption \ref{ass2} holds with $\alpha_y=2 \cdot3^{(-3 + y)/2 }\mathds{1}_{\{y\text{ is odd}\}}$, for all $y\in\Z_{\leq 1}$ and, $c_y=\mathds{1}_{\{y\text{ is odd}\}}$, for all $y\in\Z_{\leq0}$. 
\end{prop}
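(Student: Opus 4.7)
The plan is to apply \cref{prop:ehibvreibcpiwrnfc} directly to the succession rule in \cref{eq:gen_rule_fam_B}. Concretely, I would verify three points in turn: (i) the structural condition \cref{eq:ryuhfweubfweodq} holds (with $\beta=1$) so that \cref{prop:ehibvreibcpiwrnfc} applies, (ii) the set $\mathcal{Y}_1$ of consistent jumps and the multiplicities $(m_y)_{y\in\mathcal{Y}_1}$ are identified, and (iii) the system in \cref{eq:frhweibdowenpen} admits a solution $(p,q)$ for which the resulting distribution $\alpha_y = p\cdot q^y\cdot m_y$ matches the stated formula.

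For (i) and (ii), the succession rule says that a vertex labeled $k$ has as children the integers $\ell$ with $1\leq \ell\leq k+1$ and $k-\ell$ odd, each appearing exactly once. Writing $\ell=k+y$, this is the same as $y\leq 1$, $y$ odd, and $k+y\geq 1$. Hence there are no repeated labels in the succession rule, so no coloring is needed: one gets $\mathcal{Y}_1=\{1,-1,-3,-5,\dots\}$ and $m_y=\mathds{1}_{\{y\text{ odd},\,y\leq 1\}}$. The children set thus agrees with $\{(k+y)^c:y^c\in\mathcal{Y}^c_1,\,k+y\geq 1\}$ for every $k\geq 0$, which is exactly \cref{eq:ryuhfweubfweodq}.

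For (iii), the centering condition in \cref{eq:frhweibdowenpen} reads
\[
\sum_{y\in\mathcal{Y}_1} y\cdot q^y = q\sum_{n\geq 0}(1-2n)q^{-2n}=0.
\]
Setting $x=q^{-2}$ and using $\sum_{n\geq 0}x^n=(1-x)^{-1}$ and $\sum_{n\geq 0}n x^n=x(1-x)^{-2}$, this simplifies to $(1-3x)(1-x)^{-2}=0$, forcing $x=1/3$ and hence $q=\sqrt{3}$. The normalization $p\sum_{y\in\mathcal{Y}_1}q^y=1$ becomes $p\cdot q\cdot(1-q^{-2})^{-1}=p\cdot \tfrac{3\sqrt{3}}{2}=1$, giving $p=\tfrac{2\sqrt{3}}{9}$. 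The variance condition is automatic thanks to the geometric decay of $q^y$ as $y\to-\infty$. Substituting back,
\[
\alpha_y=p\cdot q^y\cdot m_y=\tfrac{2\sqrt{3}}{9}\cdot 3^{y/2}\cdot\mathds{1}_{\{y\text{ odd}\}}=2\cdot 3^{(y-3)/2}\cdot\mathds{1}_{\{y\text{ odd}\}},
\]
which is exactly the formula in the statement; the colors $c_y=m_y=\mathds{1}_{\{y\text{ odd}\}}$ follow directly from the multiplicities.

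There is no real obstacle here — the only mildly delicate point is that the succession rule forces parity to alternate along every path in the tree, but this is automatically compatible with the support of $\alpha_y$ being contained in the odd integers (the random walk then alternates parity deterministically, starting from $\beta-1=0$). Once (i)–(iii) are in place, \cref{prop:ehibvreibcpiwrnfc} yields the distributional identity \cref{eq:rewriting_chain}, concluding the proof.
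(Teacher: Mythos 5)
Your proposal is correct and follows the same route as the paper, which proves this proposition exactly by extracting $\mathcal{Y}_1=\{1,-1,-3,\dots\}$ with trivial multiplicities from the succession rule in \cref{eq:gen_rule_fam_B}, solving the system in \cref{eq:frhweibdowenpen} (yielding $q=\sqrt 3$, $p=2\sqrt3/9$), and invoking \cref{prop:ehibvreibcpiwrnfc}; your computations match the stated $\alpha_y=2\cdot 3^{(y-3)/2}\mathds{1}_{\{y\text{ odd}\}}$ and $c_y=m_y$. Your parenthetical remark about parity alternation is exactly the span-$2$ phenomenon the paper records in \cref{rem:sublattice}, and it indeed affects only the local limit theorems later, not \cref{ass2} itself.
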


\begin{rem}\label{rem:sublattice}
	Note that the step-distribution $\alpha_y=2 \cdot3^{(-3 + y)/2 }\mathds{1}_{\{y\text{ is odd}\}}$ has span equal to $2$. 
\end{rem}

Assumption \ref{ass3} follows again using the specific behavior of the set $\text{AS}(\sigma)$ for $\sigma\in\mathcal{B}$ and how the statistic $|\text{AS}(\cdot)|$ determines the succession rule of the generating tree (for that we refer to the proof of \cite[Proposition 2.2]{MR2376115}). 

Therefore the main results of our paper hold for the family $\mathcal{B}$.

\medskip

Note that all the results in this \cref{sect:examples_gen_tree_Ok} give a proof of \cref{thm:examples_ok}.

\appendix

\section{Local topology for permutations}\label{sect:appe_local_topo}

This section summarizes the main results obtained in \cite[Section 2]{borga2018local}. In the first part of the section we define the notion of finite and infinite rooted permutation. Then we introduce a local distance and the corresponding notion of convergence for deterministic sequences of rooted permutations. Finally, we extend this notion of convergence (in two non-equivalent ways) to sequences of \emph{random} permutations.

We start by defining the notion of finite and infinite rooted permutation. 

\begin{defn}
	A \emph{finite rooted permutation} is a pair $(\sigma,i),$ where $\sigma\in\mathcal{S}^n$ and $i\in[n]$ for some $n\in\Z_{>0}.$
\end{defn}

We denote by $\mathcal{S}^n_{\bullet}$ the set of rooted permutations of size $n$ and with $\mathcal{S}_{\bullet}:=\bigcup_{n\in\Z_{>0}}\mathcal{S}^n_{\bullet}$ the set of finite rooted permutations. We write sequences of finite rooted permutations in $\mathcal{S}_{\bullet}$ as $(\sigma_n,i_n)_{n\in\Z_{>0}}.$ 

To a rooted permutation $(\sigma,i),$ we associate (as shown in the right-hand side of \cref{rest}) the pair $(\Asi,\leqsi),$  where $\Asi\coloneqq[-i+1,|\sigma|-i]$ is a finite interval containing 0 and $\leqsi$ is a total order on $\Asi,$ defined for all $\ell,j\in \Asi$ by
\begin{equation*}
\ell\leqsi j\qquad\text{if and only if}\qquad \sigma_{\ell+i}\leq\sigma_{j+i}\;.
\end{equation*} 
Informally, the elements of $\Asi$ should be thought of as the column indices of the diagram of $\sigma$,
shifted so that the root is in column $0$.
The order $\leqsi$ then corresponds to the vertical order on the dots in the corresponding columns.

\begin{figure}[htbp]
	\begin{center}
		\includegraphics[scale=.67]{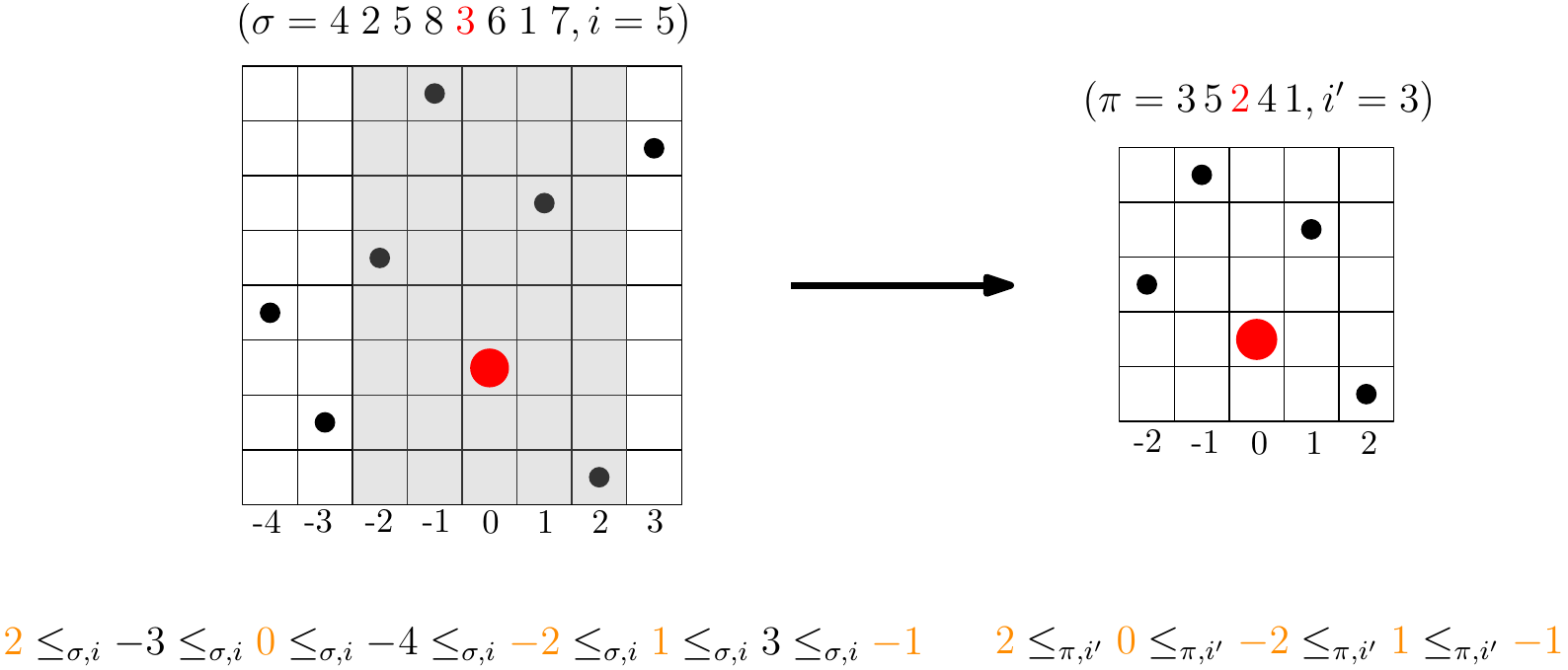}\\
		\caption{Two rooted permutations and the associated total orders. The big red dot indicates the root of the permutation. The vertical gray strip and the relation between the two rooted permutations will be clarified later.  \label{rest}}
	\end{center}
\end{figure}

Clearly this map is a bijection from the space of finite rooted permutations $\mathcal{S}_{\bullet}$ to the space of total orders on finite integer intervals containing zero. Consequently, we identify every rooted permutation  $(\sigma,i)$ with the total order $(\Asi,\leqsi).$

Thanks to the identification between rooted permutations and total orders, the following definition of infinite rooted permutation is natural. 

\begin{defn}\label{defn:inf_rooted}
	We call \emph{infinite rooted permutation} a pair $(A,\preccurlyeq)$ where $A$ is an infinite interval of integers containing 0 and $\preccurlyeq$ is a total order on $A$. We denote the set of infinite rooted permutations by $\mathcal{S}^{\infty}_\bullet.$
\end{defn} 
We highlight that infinite rooted permutations can be thought of as rooted at 0. We set 
$${\tilde{\mathcal{S}}_{\bullet}}\coloneqq\Sr\cup\mathcal{S}^\infty_{\bullet},$$
namely, the set of finite and infinite rooted permutations. 

We now introduce the following \textit{restriction function around the root} defined, for every $h\in\Z_{>0}$, as follows
\begin{equation}
\label{rhfunct}
\begin{split}
r_h \colon\quad &{\tilde{\mathcal{S}}_{\bullet}}\;\longrightarrow \qquad \;\mathcal{S}_\bullet;\\
(A,&\preccurlyeq) \mapsto \big(A\cap[-h,h],\preccurlyeq\big)\;.
\end{split}
\end{equation}
We can think of restriction functions as a notion of neighborhood around the root.
For finite rooted permutations we also have the equivalent description of the restriction functions $r_h$ in terms of consecutive patterns: if $(\sigma,i)\in\Sr$ then $r_h(\sigma,i)=\pat_{[a,b]}(\sigma,i)$ where $a=\max\{1,i-h\}$ and $b=\min\{|\sigma|,i+h\}.$ An example is given in \cref{rest} for the restriction function $r_2$.

The \emph{local distance} $d$ on the set of (possibly infinite) rooted permutations ${\tilde{\mathcal{S}}_{\bullet}}$ is defined as follows: given two rooted permutations $(A_1,\preccurlyeq_1),(A_2,\preccurlyeq_2)\in{\tilde{\mathcal{S}}_{\bullet}},$
\begin{equation}
\label{distance}
d\big((A_1,\preccurlyeq_1),(A_2,\preccurlyeq_2)\big)\coloneqq2^{-\sup\big\{h\in\Z_{>0}\;:\;r_h(A_1,\preccurlyeq_1)=r_h(A_2,\preccurlyeq_2)\big\}},
\end{equation}
with the conventions that $\sup\emptyset=0,$ $\sup\Z_{>0}=+\infty$ and $2^{-\infty}=0.$ Obviously, the distance $d$ determines a notion of convergence for deterministic sequences of rooted permutations, called \emph{local convergence}.
The metric space $({\tilde{\mathcal{S}}_{\bullet}},d)$ is a compact space (see \cite[Theorem 2.16]{borga2018local}).

We also recall the definitions of two different versions of local convergence for sequences of random permutations rooted uniformly at random. The presence of two sources of randomness, one for the choice of the permutation and one for the (uniform) choice of the root, leads to two non-equivalent possible definitions: the \emph{annealed} and the \emph{quenched}  version of the so-called Benjamini--Schramm convergence. Intuitively, in the second definition, the random permutation is frozen, whereas  in the first one, the random permutation and the choice of the root are treated on the same level.  

In both cases, $(\bm{\sigma}_n)_{n\in\Z_{>0}}$ denotes a sequence of random permutations in $\mathcal{S}$ and $\bm{i}_n$ denotes a uniform index of $\bm{\sigma}_n,$ i.e. a uniform integer in $[1,|\bm{\sigma}_n|]$.
\begin{defn}[Annealed version of the Benjamini-Schramm convergence]\label{defn:weakweakconv}
	We say that $(\bm{\sigma}_n)_{n\in\Z_{>0}}$ \emph{converges in the annealed Benjamini-Schramm sense} to a random variable $\bm{\sigma}^{\infty}$ with values in $\Sri$ if the sequence of random variables $(\bm{\sigma}_n,\bm{i}_n)_{n\in\Z_{>0}}$ converges in distribution to $\bm{\sigma}^{\infty}$ with respect to the local distance $d$.
\end{defn}

\begin{defn}[Quenched version of the Benjamini-Schramm convergence]\label{strongconv}
	We say that $(\bm{\sigma}_n)_{n\in\Z_{>0}}$ \emph{converges in the quenched Benjamini-Schramm sense} 
	to a random measure $\bm{\mu}^\infty$ on $\Sri$
	if the sequence of conditional laws $\big(\mathcal{L}aw\big((\bm{\sigma}_n,\bm{i}_n)\big|\bm{\sigma}_n\big)\big)_{n\in\Z_{>0}}$
	converges in distribution to $\bm{\mu}^\infty$ with respect to the weak topology induced by the local distance $d$.
\end{defn}

\begin{rem}A more combinatorial description of the above definition can be found in \cite[Section 2.5]{borga2018local}.
\end{rem}
We highlight that, in the annealed version, the limiting object is a {\em random variable} with values in $\Sri$,
while for the quenched version, the limiting object $\bm{\mu}^{\infty}$ is a {\em random measure} on $\Sri$.

We have the following characterizations of the two versions of the Benjamini-Schramm convergence 
\cite[Theorems 2.24 and 2.32]{borga2018local}.

\begin{thm}
	\label{thm:local_conv_perm_charact}
	For any $n\in\Z_{>0}$, let $\bm{\sigma}_n$ be a random permutation of size $n$. Then
	\begin{enumerate}
		\item The sequence $(\bm{\sigma}_n)_{n\in\Z_{>0}}$ converges in the annealed Benjamini-Schramm sense
		to some $\bm{\sigma}^{\infty}$ if and only if
		there exist non-negative real numbers $(\Delta_{\pi})_{\pi\in\mathcal{S}}$ such that 
		$$\E\left[\frac{{\coc}(\pi,\bm{\sigma}_n)}{n}\right]\to\Delta_{\pi},\quad\text{for all patterns}\quad\pi\in\mathcal{S}.$$
		\item The sequence $(\bm{\sigma}_n)_{n\in\Z_{>0}}$ converges in the quenched Benjamini-Schramm sense
		to some $\bm{\mu}^\infty$ if and only if      
		there exist non-negative real random variables $(\bm{\Lambda}_\pi)_{\pi\in\mathcal{S}}$ such that
		$$\left(\frac{\coc(\pi,\bm{\sigma}_n)}{n}\right)_{\pi\in\mathcal{S}}\stackrel{d}{\to}(\bm{\Lambda}_{\pi})_{\pi\in\mathcal{S}},$$
		w.r.t. the product topology. 
	\end{enumerate}
Since the variables $\frac{\coc(\pi,\bm{\sigma}_n)}{n}$ take values in $[0,1]$,
the quenched Benjamini-Schramm convergence implies the annealed one. Moreover, if $(2)$ holds (and hence also $(1)$) then, $\E[\bm{\Lambda}_{\pi}]=\Delta_{\pi},$ for all $\pi\in\mathcal{S}.$
\end{thm}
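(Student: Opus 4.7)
My plan is to reduce both parts of the theorem to a single elementary identity linking consecutive pattern densities to masses of balls in the local metric. Recall that $(\Sri,d)$ is compact by \cite[Theorem 2.16]{borga2018local}, and by \cite[Observation 2.23]{borga2018local} the countable collection
\[
\mathcal{A}=\bigl\{B\bigl((\pi,k),2^{-h}\bigr):h\in\Z_{>0},\,(\pi,k)\in\Sr\bigr\}
\]
is a separating class for probability measures on $\Sri$. Consequently, convergence in distribution of $\Sri$-valued random variables is equivalent to convergence of the probabilities assigned to every $B\in\mathcal{A}$ (tightness being automatic from compactness), and weak convergence of random probability measures on $\Sri$ is equivalent to joint convergence in distribution of their masses on every finite sub-collection of $\mathcal{A}$.

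The key calculation is that for any $(\pi,k)\in\mathcal{S}^{2h+1}_\bullet$, the uniform choice of $\bm{i}_n$ yields
\[
\P\bigl((\bm{\sigma}_n,\bm{i}_n)\in B((\pi,k),2^{-h})\,\bigm|\,\bm{\sigma}_n\bigr)=\frac{1}{n}\#\{i\in[n]:r_h(\bm{\sigma}_n,i)=(\pi,k)\},
\]
since the ball is exactly $\{(\sigma,i):r_h(\sigma,i)=(\pi,k)\}$. For $k=h+1$ and $i\in[h+1,n-h]$, the condition $r_h(\bm{\sigma}_n,i)=(\pi,h+1)$ coincides with $\pat_{[i-h,i+h]}(\bm{\sigma}_n)=\pi$, so the right-hand side equals $\coc(\pi,\bm{\sigma}_n)/n$ up to a boundary correction of order $O(h/n)$. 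For balls indexed by $(\pi,k)$ with $|\pi|<2h+1$ or $k\neq h+1$, the event forces $\bm{i}_n$ to lie within distance $h$ of $1$ or of $n$, making the right-hand side $O(h/n)$.

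Part (1) follows by taking expectations in the above display: convergence of $\E[\coc(\pi,\bm{\sigma}_n)/n]$ for every pattern $\pi\in\mathcal{S}$ is equivalent, modulo the vanishing boundary terms, to convergence of $\P((\bm{\sigma}_n,\bm{i}_n)\in B)$ for every $B\in\mathcal{A}$; by the separating class and compactness, this is equivalent to convergence in distribution to some $\bm{\sigma}^\infty\in\Sri$. For part (2), the same identity, read as a statement about the random conditional probabilities, shows that the joint convergence in distribution of $(\coc(\pi,\bm{\sigma}_n)/n)_{\pi\in\mathcal{S}}$ is equivalent to the joint convergence in distribution of the random vectors of ball masses of $\mathcal{L}aw\bigl((\bm{\sigma}_n,\bm{i}_n)\bigm|\bm{\sigma}_n\bigr)$, hence to weak convergence of these random measures to some $\bm{\mu}^\infty$. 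The implication \emph{quenched $\Rightarrow$ annealed} and the identity $\E[\bm{\Lambda}_\pi]=\Delta_\pi$ are immediate, since $\coc(\pi,\bm{\sigma}_n)/n\in[0,1]$ and bounded convergence in distribution preserves expectations.

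The main obstacle is the rigorous bookkeeping in part (2): one must pass from the convergence in distribution of finite-dimensional vectors of ball masses to weak convergence of the random measures themselves. This relies on the countability of $\mathcal{A}$ and the compactness of the space of probability measures on $\Sri$, so that any bounded continuous functional on this space of measures is uniformly approximated by continuous functions of finitely many evaluation maps at indicators of balls in $\mathcal{A}$, a standard consequence of the separating property.
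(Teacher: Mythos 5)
This theorem is not proved in the present paper: it is quoted from \cite[Theorems 2.24 and 2.32]{borga2018local}, so there is no in-paper proof to compare against, and your proposal is in essence the same argument as in that source. Your two key ingredients are exactly the right ones: the identity $\P\bigl((\bm{\sigma}_n,\bm{i}_n)\in B((\pi,h+1),2^{-h})\,\big|\,\bm{\sigma}_n\bigr)=\coc(\pi,\bm{\sigma}_n)/n+O(h/n)$ with a deterministic boundary error, and the reduction of (annealed, resp.\ quenched) convergence to convergence of (expected, resp.\ joint laws of) ball masses over the countable separating class $\mathcal{A}$, using compactness of $\Sri$ and, for the quenched part, a Stone--Weierstrass argument on the compact space of probability measures on $\Sri$. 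The one point you should state explicitly is that each ball $B((\pi,k),2^{-h})$ is clopen (it coincides with an $r_h$-preimage, so its indicator is continuous and its boundary is empty); this is what makes ``weak convergence $\Leftrightarrow$ convergence of ball masses'' valid without any continuity-set caveat, and it is implicitly used in both parts of your argument.
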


We conclude this section with the particular case when the limiting objects $(\bm{\Lambda}_{\pi})_{\pi\in\mathcal{S}}$ in \cref{thm:local_conv_perm_charact} are deterministic (see \cite[Corollary 2.38]{borga2018local}). 

\begin{cor}
	\label{detstrongbsconditions}
	For any $n\in\Z_{>0},$ let $\bm{\sigma}^n$ be a random permutation of size $n$. Then the following are equivalent:
	\begin{enumerate}[(a)]
		\item there exists a (deterministic) measure $\mu$ on $\Sri$ that is the quenched Benjamini-Schramm limit of $\bm{\sigma}^n;$
		\item there exists an infinite vector of non-negative real numbers $(\Lambda_{\pi})_{\pi\in\mathcal{S}}$ such that for all $\pi\in\mathcal{S},$ $$\frac{\coc(\pi,\bm{\sigma}^n)}{n}\stackrel{P}{\to}\Lambda_{\pi}.$$
	\end{enumerate}
\end{cor}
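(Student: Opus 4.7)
The plan is to derive this corollary directly from \cref{thm:local_conv_perm_charact}(2), specializing the random-limit statement to the deterministic case. The key observation is that for a sequence of real random variables, convergence in distribution to a constant is equivalent to convergence in probability, and that a random measure on the compact space $\Sri$ is determined a.s.\ by its pattern densities at the root.

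For the direction $(a)\Rightarrow(b)$, suppose that $\bm\mu^\infty$ equals the deterministic measure $\mu$ a.s. By \cref{thm:local_conv_perm_charact}(2), there exist non-negative real random variables $(\bm\Lambda_\pi)_{\pi\in\mathcal{S}}$ such that $(\coc(\pi,\bm\sigma^n)/n)_{\pi\in\mathcal{S}}$ converges in distribution to $(\bm\Lambda_\pi)_{\pi\in\mathcal{S}}$. Inspecting the construction in the proof of that theorem, each $\bm\Lambda_\pi$ can be read off from $\bm\mu^\infty$ as the mass assigned to the cylinder event $\{r_{|\pi|-1}(A,\preccurlyeq)\text{ induces the pattern }\pi\text{ around the root}\}$; hence $\bm\Lambda_\pi$ is a measurable function of $\bm\mu^\infty$ and therefore a.s.\ a constant $\Lambda_\pi$. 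Marginal convergence in distribution to a constant upgrades to marginal convergence in probability, yielding $(b)$.

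For the direction $(b)\Rightarrow(a)$, assume $\coc(\pi,\bm\sigma^n)/n\stackrel{P}{\to}\Lambda_\pi$ for every $\pi\in\mathcal{S}$. Since $\mathcal{S}$ is countable and convergence in probability to a constant on each coordinate implies joint convergence in probability in the countable product $[0,1]^{\mathcal{S}}$ with product topology, we obtain joint convergence in distribution to the deterministic vector $(\Lambda_\pi)_{\pi\in\mathcal{S}}$. Applying \cref{thm:local_conv_perm_charact}(2) with $\bm\Lambda_\pi=\Lambda_\pi$ produces a random measure $\bm\mu^\infty$ that is the quenched Benjamini--Schramm limit of $\bm\sigma^n$. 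It then remains to verify that $\bm\mu^\infty$ is a.s.\ deterministic. For this, note that the collection of cylinder sets of the form $\{(A,\preccurlyeq)\in\Sri:r_h(A,\preccurlyeq)=(\pi,k)\}$, as $h$, $\pi$ and $k$ vary, forms a countable $\pi$-system that generates the Borel $\sigma$-algebra on the compact metric space $(\Sri,d)$; since the mass assigned by $\bm\mu^\infty$ to each such cylinder is a deterministic function of $(\Lambda_\pi)_{\pi\in\mathcal{S}}$, two realizations of $\bm\mu^\infty$ must agree a.s.\ on each cylinder, and hence as measures.

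The proof is essentially bookkeeping on top of \cref{thm:local_conv_perm_charact}(2); the only conceptual point to keep straight is the passage from marginal to joint convergence in the product topology, which is free because the targets are constants, and the identification of $\bm\mu^\infty$ through its pattern densities at the root, which is already embedded in the characterization theorem.
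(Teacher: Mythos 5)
The paper does not actually prove \cref{detstrongbsconditions}: it is quoted from \cite[Corollary 2.38]{borga2018local}, so there is no in-paper proof to compare against. Your derivation from \cref{thm:local_conv_perm_charact}(2) is the natural one and is correct in outline; the two purely probabilistic ingredients you use (convergence in distribution to a constant upgrades to convergence in probability, and coordinate-wise convergence in probability to constants gives joint convergence in the countable product topology) are fine.

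The soft spot, in both directions, is the identification step that you justify by ``inspecting the construction in the proof of that theorem'': the statement of \cref{thm:local_conv_perm_charact} as quoted only asserts the \emph{existence} of some random vector $(\bm{\Lambda}_\pi)_\pi$, and its proof lives in the cited reference, so a self-contained argument should establish directly that (i) in $(a)\Rightarrow(b)$ the limits $\bm{\Lambda}_\pi$ are measurable functionals of $\bm{\mu}^\infty$, and (ii) in $(b)\Rightarrow(a)$ the cylinder masses $\bm{\mu}^\infty(C)$ are determined by the constants $\Lambda_\pi$. Both follow from the same two observations, which you should spell out: first, each cylinder $C=\{(A,\preccurlyeq):r_h(A,\preccurlyeq)=(\pi,k)\}$ is clopen in the compact metric space $(\Sri,d)$, so $\nu\mapsto\nu(C)$ is continuous for the weak topology, and quenched convergence plus the continuous mapping theorem gives $\P\big(r_h(\bm{\sigma}_n,\bm{i}_n)=(\pi,k)\,\big|\,\bm{\sigma}_n\big)\xrightarrow{d}\bm{\mu}^\infty(C)$; second, the conditional probability that the window $[\bm{i}_n,\bm{i}_n+|\pi|-1]$ induces $\pi$ equals $\coc(\pi,\bm{\sigma}_n)/n+O(1/n)$ (uniform root, boundary effects of size $O(|\pi|/n)$), and conversely each $r_h$-cylinder probability is, up to $O(1/n)$, a finite sum of such consecutive-pattern densities over patterns of size $2h+1$. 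With these two facts, $\bm{\mu}^\infty(C)$ is an a.s.\ limit in probability of deterministic combinations of the $\coc$-densities, which makes it a.s.\ constant in $(b)\Rightarrow(a)$, and makes the $\bm{\Lambda}_\pi$ a.s.\ constant in $(a)\Rightarrow(b)$; your $\pi$-system argument then correctly concludes that $\bm{\mu}^\infty$ is a.s.\ a fixed measure. A last cosmetic point: the event relevant to $\coc(\pi,\cdot)/n$ is that the positions $0,\dots,|\pi|-1$ to the right of the root induce $\pi$, not the symmetric ball $r_{|\pi|-1}$; this event is a finite union of $r_{|\pi|-1}$-cylinders, so your argument is unaffected, but the phrasing should be adjusted.
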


\section*{Acknowledgements}
The author is very grateful to Mathilde Bouvel and Valentin Féray for constant and stimulating discussions. Thanks to Svante Janson, Vitali Wachtel, Francesco Caravenna and Igor Kortchemski for some discussions and clarifications on conditioned random walks. He finally thanks the anonymous referees for all their precious and helpful comments.

This work was completed with the support of the SNF Eccellenza grant no.\ PCEFP2\_18687, entitled  ``Enumeration and limit shapes for pattern-avoiding permutations and related combinatorial objects".

\bibliography{mybib}
\bibliographystyle{alpha}

\end{document}